\documentclass[11pt,a4paper,twoside]{article}

\usepackage[utf8]{inputenc}
\usepackage[T1]{fontenc}
\usepackage{lmodern}
\usepackage{euler} 
\usepackage{enumitem}
\usepackage[english]{babel} \usepackage{amsmath,amssymb,amsthm} \usepackage{mathtools} \usepackage{comment}
\usepackage[final]{microtype}
\usepackage{changepage}

\usepackage{color}

\usepackage{soul}

\usepackage{geometry}
\usepackage{titlesec}
\usepackage{fancyhdr}
\newcommand{\shorttitle}{Finite semisimplicial sets, differential graded algebras, and cellular sheaves}

\titleformat{\section}[block]
  {\centering\scshape\small} 
  {\thesection}{0.6em}{}    

\titleformat{\subsection}[block]
  {\centering\scshape\footnotesize}
  {\thesubsection}{0.5em}{}

\titlespacing*{\section}{0pt}{2.2ex plus .6ex}{1.2ex}
\titlespacing*{\subsection}{0pt}{1.6ex plus .5ex}{0.8ex}
\titlespacing*{\subsubsection}{0pt}{1.2ex plus .4ex}{0.6ex}

\usepackage{hyperref}
\usepackage{cleveref}
\fancypagestyle{bibpages}{%
  \fancyhf{}%
  \fancyhead[LO,RE]{\small\thepage}%
}


\hypersetup{
colorlinks=true,
linkcolor=black,
citecolor=black,
urlcolor=black
}

\usepackage{tikz}
\usepackage{tikz-cd}

\theoremstyle{plain}
\newtheorem{theorem}{Theorem}[section]
\newtheorem{lemma}[theorem]{Lemma}
\newtheorem{proposition}[theorem]{Proposition}
\newtheorem{corollary}[theorem]{Corollary}
\newtheorem{mainthm}{Theorem}

\theoremstyle{definition}
\newtheorem{definition}[theorem]{Definition}
\newtheorem{example}[theorem]{Example}
\newtheorem{remark}[theorem]{Remark}
\newtheorem{excerpt}[theorem]{}

\newcommand{\cC}{\mathcal{C}}
\newcommand{\cA}{\mathcal{A}}
\newcommand{\cB}{\mathcal{B}}

\newcommand{\cL}{\mathcal{L}}

\newcommand{\Sets}{\mathrm{Sets}}

\newcommand{\ssreg}{\mathsf{SS}^{\mathrm{ns}}}
\newcommand{\ssregh}{\widehat{\mathsf{SS}}^{\mathrm{ns}}}
\newcommand{\digraphs}{\mathsf{DiGraph}_{\leq 1}}

\renewenvironment{proof}[1][Proof]{
  \par\noindent\textbf{#1.}\quad
}{
  \qed\par\addvspace{0.8\baselineskip}%
}
\newenvironment{proof*}[1][Proof]{%
  \par\noindent\textbf{#1.}\quad
}{
  \par\addvspace{0.8\baselineskip}%
}

\newlength{\TitleAuthorSep}
\newlength{\AuthorAbstractSep}
\newlength{\AbstractIntroSep}
\newlength{\AuthorAffilSep}
\newcommand{\papertitle}[3]{
  \begin{center}
    {\Large\scshape #1\par}
    \vspace{\TitleAuthorSep}
    {\normalsize\scshape #2\par}
    \vspace{\AuthorAffilSep}
    {\footnotesize #3\par}
  \end{center}
  \vspace{\AuthorAbstractSep}
}

\newenvironment{narrowabstract}{
  \par\vspace{-0.1\baselineskip}
  \begin{adjustwidth}{+12mm}{+12mm}
  \footnotesize\noindent\textsc{Abstract}. 
}{
  \end{adjustwidth}\par\vspace{1.2\baselineskip}
}

\renewenvironment{narrowabstract}{
  \par\vspace{-0.1\baselineskip}
  \begin{adjustwidth}{+12mm}{+12mm}
  \footnotesize\noindent\textsc{Abstract}. 
}{
  \end{adjustwidth}\par\vspace{\AbstractIntroSep}
}

\begin{document}

\papertitle
  {Finite semisimplicial sets, differential graded algebras, and cellular sheaves}
  {J.-W. van Looy\textsuperscript{1}
   \qquad
   F. Zanchetta\textsuperscript{2.*}}
  {%
    \textsuperscript{1}University of Bologna,
    Bologna, Italy\\
    \href{mailto:janwillem.vanlooy@unibo.it}{\texttt{janwillem.vanlooy@unibo.it}}
    \\[0.7ex]
    \textsuperscript{2}University of Bologna,
    Bologna, Italy\\
    \href{mailto:ferdinando.zanchett2@unibo.it}{\texttt{ferdinando.zanchett2@unibo.it}}
    \\[0.7ex]
    \textsuperscript{*}Corresponding author
  }
\begin{narrowabstract} \noindent We develop a differential graded model for finite non-singular semisimplicial sets and their cellular sheaves. To a finite non-singular semisimplicial set \(S\), we associate an exterior DGA \(\Omega^\bullet_S\), extending the classical anti-equivalence between finite simple directed graphs and first-order differential calculi to higher degrees. We characterize the essential image of this construction, obtaining an equivalence of categories that recovers the graph-FODC correspondence in dimension one. For a fixed \(S\), we then characterize a category of differential graded \(\Omega^\bullet_S\)-modules equivalent to the category of cellular sheaves on $S$, thereby giving a differential refinement of the usual incidence-algebra description. Finally, we study connections and curvature in this framework. Connections are described by edgewise linear maps and their curvature on a 2-dimensional simplex is the difference between direct and composite edge transport. When the edge transports are invertible, vanishing of this curvature on every \(2\)-simplex can equivalently be seen as a gluing criterion to extend the given edge transports to a connection sheaf on \(P_S\).
\end{narrowabstract}

\thispagestyle{empty} 
\section{Introduction}

A cellular sheaf \cite{Ayzenberg2025SheafTheory, curry2014SheavesCosheavesAndApplications} on a finite non-singular complex \( S\) can be described as a covariant functor from its face poset \( P_S \) to the category of finite-dimensional vector spaces over \(\Bbbk\), which we denote by \( \mathrm{Vect}_\Bbbk \). The purpose of this paper is to give this combinatorial datum a differential graded model. To a finite non-singular semisimplicial set $ S\in \ssreg_{\leq n}$ of dimension $n$ (we also require that the simplices of our spaces are uniquely determined by their vertices, with precise definitions given in Definition \ref{definition:non-singular-complexes}), we associate an exterior differential graded algebra $ \Omega_S^\bullet $, and we characterize the DGAs that arise in this way. For fixed $S$, we then characterize the DG-modules over $ \Omega_S^\bullet$ that come from cellular sheaves on $ S $. In doing so, we set up a dictionary between semisimplicial combinatorics, cellular sheaves, and differential graded algebra.

The first correspondence is guided by the classical correspondence between directed graphs and first-order differential calculi (FODC) on algebras of functions on finite sets \cite{DimakisMullerHoissen1994Discrete, Majid2012Graph}, which we recall in Example \ref{example: graph differential calculus}. A non-singular \( 1 \)-dimensional semisimplicial set has no loops among its edges. Its free degeneracy completion, usually required to establish a correspondence between graphs and FODC, adds a degenerate loop at each vertex, thereby allowing edge-collapsing morphisms, while the differential forms discard these loops. The same mechanism extends to higher degrees. For \( S \in \ssreg_{\leq n}\), write \( \widehat S := UL(S) \), where \( L(S) \) is the free degeneracy completion of \(S\) (formally we look at it as a simplicial set) and $U$ is the forgetful functor associating to a simplicial set its underlying semisimplicial set. Denote by \( \ssregh_{\leq n} \) the category of semisimplicial sets obtained with this procedure. We think of \( \widehat{S} \) as the higher dimensional analogue of the free degeneracy completion of a non-singular graph. Degenerate simplices occur in the simplicial completion as they are required to allow simplex-collapsing maps, but they do not represent additional cells and contribute no basis forms after normalization.

A natural candidate for the higher-degree calculus is the maximal prolongation of the first-order calculus on the \(1\)-skeleton. An explicit description of the maximal prolongation is given in \cite{BeggsMajid2020QRG}, and a complementary description of local degree-two calculi appears in \cite[\S 5]{BrzezinskiMajid1997}. The maximal prolongation, however, is determined entirely by the first-order calculus and hence, by the \(1\)-truncation of \(S\). As such, it cannot distinguish semisimplicial sets with isomorphic \( 1\)-truncations but different higher-dimensional simplices. We instead define $$ \Omega^\bullet_S := N^\bullet(L(S); \Bbbk),$$ where
\( N^\bullet ( L(S) ; \Bbbk) \) denotes the normalized cochain complex equipped with its standard cup product \cite[\S 3.2]{Hatcher_2002}. The use of normalization naturally recalls the Dold-Kan correspondence \cite{weibelhom}, but the result we present below is of a different nature: Dold-Kan concerns simplicial objects in an abelian category and their underlying chain complexes, whereas here the multiplicative and support structure of the normalized cochains is essential for recovering the original semisimplicial set.
Since the nondegenerate simplices of \( L(S) \) are precisely the simplices of S, \(\Omega_S^p\) has a basis indexed by \( S_p \). As discussed in Remark \ref{remark: relation-with-max-prol}, there is a surjective DGA morphism \[ \Omega_{\max}^\bullet( \Omega_S^1) \twoheadrightarrow\Omega_S^\bullet,\] sending a path monomial of the maximal prolongation to the corresponding simplex form when its ordered vertex tuple is realized by a simplex of \(S\), and to zero otherwise. Thus the quotient retains the higher dimensional face data that is invisible to the calculus on the \(1\)-skeleton. 

Our first result shows that this construction loses no combinatorial information. Writing $\mathsf{DG}_n$ for the category of DGAs concentrated in degree $0,\ldots,n$, we prove that the resulting functor $$ \bigl( \ssregh_{\leq n}\bigr)^{\mathrm{op}}\rightarrow \mathsf{DG}_n, \quad\widehat{S}\rightarrow\Omega^\bullet_S$$ is fully faithful, and conditions {(D1)--(D3)} in Definition \ref{def: concrete DGAs essential image} characterize its essential image, which we denote by $\widehat{\mathsf{DG}}_n$. Roughly, these conditions say that degree zero is an algebra of functions on a finite set, that higher forms are generated by edge forms, and that the nonzero path monomials form a basis. This can be seen as the first main result of this paper.
\begin{mainthm}\label{theorem: A - for the introduction}[\ref{theorem: essential image characterization}]
The functor \( \Omega^\bullet\colon \bigl(\ssregh_{\leq n}\bigr)^{\mathrm{op}}
\rightarrow \widehat{\mathsf{DG}}_n\) is an equivalence of categories. \end{mainthm}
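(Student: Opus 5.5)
Since the functor $\Omega^\bullet$ lands in $\widehat{\mathsf{DG}}_n$ by construction, the theorem follows once we show full faithfulness and essential surjectivity. Both rest on one \emph{reconstruction} procedure: from an object of $\widehat{\mathsf{DG}}_n$ we rebuild a non-singular semisimplicial set. By (D1), $\Omega^0\cong \Bbbk^{V}$ for a finite set $V$, with primitive idempotents $\delta_v$. Edges are the pairs $(v,w)$, $v\neq w$, with $\delta_v\,\Omega^1\,\delta_w\neq 0$. By (D3), each such space is spanned by one path monomial $e_{vw}=\delta_v\, d\delta_w$. A $p$-simplex is a vertex tuple $(v_0,\dots,v_p)$ whose path monomial $e_{v_0v_1}\cdots e_{v_{p-1}v_p}$ is nonzero. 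Face maps delete vertices. For this to be a well-defined non-singular semisimplicial set, faces of nonzero monomials must again be nonzero. I would get this from the Leibniz rule: expanding $d$ applied to a neighbouring monomial shows that $e_{v_{i-1}v_i}e_{v_iv_{i+1}}\neq 0$ forces the shortcut form $e_{v_{i-1}v_{i+1}}$ to appear. This is where (D2) enters, since it says everything is generated by edge forms. I would check that for $\Omega_S^\bullet$ this reconstruction returns $S$. That holds because the cup product of vertex-consecutive edge cochains is the simplex cochain exactly when the simplex exists (Remark \ref{remark: relation-with-max-prol}).

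\emph{Essential surjectivity.} Given $A\in\widehat{\mathsf{DG}}_n$ with reconstructed set $S$, define $\Omega_S^\bullet\to A$ on bases by sending each simplex cochain to the corresponding path monomial. It is bijective on bases by (D3). It is multiplicative because in both algebras products of path monomials are concatenation when the endpoints match and zero otherwise. It commutes with $d$ because both differentials are determined on generators by $d\delta_v=\sum_w(e_{wv}-e_{vw})$ together with the Leibniz rule, using (D2).

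\emph{Full faithfulness.} A simplicial map $f\colon \widehat S\to\widehat T$ comes from a map $L(S)\to L(T)$ and so induces $f^*$ on normalized cochains; in degree $0$ this is pullback of functions. Conversely, let $\phi\colon\Omega_T^\bullet\to\Omega_S^\bullet$ be a DGA map. In degree $0$ it is an algebra map $\Bbbk^{T_0}\to\Bbbk^{S_0}$, so it is pullback along a unique $f_0\colon S_0\to T_0$. Since $\phi$ commutes with $d$ and is multiplicative, it is determined by $\phi^0$ via (D2), which gives faithfulness. For fullness, compute $\phi(e_{ab})=\phi(\delta_a)\,d\phi(\delta_b)=\sum_{f_0(v)=a,\,f_0(w)=b}\delta_v\,d\delta_w$. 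Because this must lie in $\Omega^1_S$, the pair $(f_0(v),f_0(w))$ must be an edge of $T$ or a collapse whenever $(v,w)$ is an edge of $S$. Multiplicativity in higher degrees then shows that the image of a simplex of $S$ spans a simplex of $T$, possibly degenerate. Since $T$ is determined by vertex sets, these data assemble into a simplicial map $L(S)\to L(T)$, that is, a morphism in $\ssregh_{\leq n}$.

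\emph{Main obstacle.} The hardest step is the fullness argument, specifically the degeneracies. I must show that when $f_0$ collapses some vertices of a simplex, the required compatibility holds automatically rather than giving an extra constraint or an obstruction. In the normalized complex the degenerate simplex cochains vanish, so $\phi$ sends $\delta_v d\delta_w$ with $f_0(v)=f_0(w)$ to $0$, and this is consistent. The real check is that the non-collapsed part of the image of each $S$-simplex is a simplex of $T$. I would prove this first by induction on dimension. I would examine the product $\phi(e_{a_0a_1})\cdots\phi(e_{a_{p-1}a_p})$ and use the basis property (D3) in $\Omega_S$ to isolate the coefficient of the monomial of a given simplex $\sigma$.
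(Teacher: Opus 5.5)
There is a genuine gap in your reconstruction step, and it comes from misreading (D2). Condition (D2) does not say that everything is generated by edge forms; that is exteriority, which is part of (D1). (D2) says $\Omega^p_{x,y}=0$ whenever $\Omega^1_{x,y}=0$. In particular, since $\Omega^1_{x,x}=0$ for every FODC over $\Bbbk(X)$, it says $\Omega^p_{x,x}=0$ for all $p$. You never use this content, and you never check condition (3) of Definition~\ref{definition:non-singular-complexes} (pairwise distinct vertices) for the reconstructed $S$.

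Your Leibniz argument for the shortcut edge is sound when $v_{i-1}\neq v_{i+1}$. If $\delta_x d\delta_z=0$ with $x\neq z$, then $0=d\delta_x\wedge d\delta_z$, whose $(x,z)$-component is $-\sum_u m(x,u,z)$, and (D3) forces every term to vanish. In that case (D3) legitimately replaces the paper's appeal to (D2). But when $v_{i-1}=v_{i+1}$ the argument yields nothing, and the conclusion is in fact false without (D2). Take the universal DGA of $\Bbbk(\{x,y\})$ truncated above degree $2$. It is exterior and satisfies (D3): its only nonzero degree-two fibres are $\Omega^2_{x,x}=\Bbbk\, m(x,y,x)$ and $\Omega^2_{y,y}=\Bbbk\, m(y,x,y)$. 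Yet your recipe makes $(x,y,x)$ a $2$-simplex whose face $(x,x)$ is not an edge. Moreover, this DGA is isomorphic to no $\Omega_S^\bullet$, because $(\Omega^2_S)_{x,x}=0$ for every non-singular $S$.

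The missing step is the one in Proposition~\ref{proposition: face closure of nonzero monomials}. If $m(v_0,\ldots,v_p)\neq 0$, associativity makes every submonomial $m(v_i,\ldots,v_j)\in\Omega^{j-i}_{v_i,v_j}$ nonzero, so (D2) gives $\Omega^1_{v_i,v_j}\neq 0$. Hence the vertices are pairwise distinct and every diagonal $(v_i,v_j)$, $i<j$, is an edge. Moreover, for $p\geq 3$ the existence of the shortcut edge is not yet face closure. You must assume $m(\tau)=0$ for the deleted word $\tau$, project $d\,m(\tau)$ to the $(v_0,v_p)$-fibre, and use (D3) to see that the term $(-1)^j m(v_0,\ldots,v_p)$ cannot cancel.

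The rest of your plan matches the paper, apart from some misattributions. That $\phi$ is determined by $\phi^0$, and that compatibility with $d$ need only be checked on $\Omega^0$, both come from exteriority, not (D2). Also, ``$\phi$ is determined by $\phi^0$'' is what identifies $g^*$ with $\phi$ in the fullness argument. Faithfulness instead rests on $\widehat g$ being determined by $g_0$ (Lemma~\ref{lemma: vertex maps into completed cores}).

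The edge condition does not hold because $\phi(e_{ab})$ ``must lie in $\Omega^1_S$''. It holds because your formula $\phi(e_{ab})=\sum\delta_v\,d\delta_w$ is valid for every pair $a\neq b$. If $(a,b)\notin T_1$ the left side is $0$, so by linear independence no edge of $S$ lies over $(a,b)$.

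Your higher-dimensional step is exactly the paper's argument. You pick one vertex from each constant block of $(f_0(v_0),\ldots,f_0(v_p))$, pass to the corresponding face $\sigma'$, and read off the coefficient $1$ of $\rho_{\sigma'}$ in $\phi(m_T(w_0,\ldots,w_q))$.
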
 

For \( n=1 \), this recovers the graph--calculus anti-equivalence. In higher degrees, it shows in particular that the differential graded algebra $\Omega^\bullet_S$, rather than only its first-order part, determines the semisimplicial structure. The second main correspondence we establish in this paper concerns cellular sheaves. By the standard equivalence between functors on a poset and sheaves on its Alexandrov space \cite[\S 4.2]{curry2014SheavesCosheavesAndApplications}, functors on the face poset \( P_S \) of a non-singular complex \( S \) may be identified with sheaves on the Alexandrov space \(X_{P_S}\). We denote this category $\mathsf{Sh}( S )$. For a given \( F \in \mathsf{Sh}(S)\), we can consider the graded vector space \[ M_F^\bullet := \bigoplus_{p=0}^n \bigoplus_{\sigma\in S_p} F( \sigma).\] The restriction maps of \( F\) determine an \( \Omega_S^\bullet \)-action together with a differential on \( M_F^\bullet \). Conversely, a simplexwise decomposition and suitable support condition on the action and differential allow the sheaf restriction maps to be recovered from a DG-module. As a consequence, in Section \ref{section: cellular sheaves, differential modules} we identify a subcategory of left DG-modules, that we call cellular DG-modules and denote by $\prescript{\mathsf{c}}{S}{\mathcal{M}}^{\mathsf{DG}}$, leading to the following result.
\begin{mainthm}\label{theorem: B - for the introduction}
There is an equivalence of categories \( \mathsf{Sh}(S)\simeq \prescript{\mathsf{c}}{S}{\mathcal{M}}^{\mathsf{DG}}\).\end{mainthm}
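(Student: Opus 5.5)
We construct mutually inverse functors between $\mathsf{Sh}(S)$ and the category of cellular DG-modules. Throughout, the basis of $\Omega^p_S$ is indexed by $S_p$, with $e_\sigma$ the form dual to $\sigma$ and $e_v$ for $v\in S_0$ the primitive idempotents of $\Omega^0_S$.

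\textbf{From sheaves to modules.} For $F\in\mathsf{Sh}(S)$, set $M^\bullet_F=\bigoplus_p\bigoplus_{\sigma\in S_p}F(\sigma)$ and define the action summandwise. For a basis form $e_\tau$ with $\tau\in S_q$ and $m\in F(\sigma)$, put
\[
e_\tau\cdot m=F(\sigma\le\tau\sigma)(m)
\]
when $\sigma$ and $\tau$ concatenate to a simplex $\tau\sigma$ of $S$, meaning that the last vertex of $\tau$ is the first vertex of $\sigma$ and the joined ordered vertex tuple spans a simplex. Put $e_\tau\cdot m=0$ otherwise. This mirrors the cup product, which is $e_\tau\smile e_\sigma=e_{\tau\sigma}$ or $0$.

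Associativity follows from functoriality of $F$ on the chain $\sigma\le\tau'\sigma\le\tau\tau'\sigma$, together with associativity of concatenation. Because simplices are determined by their vertices, the simplex $\tau\tau'\sigma$ exists exactly when both partial concatenations exist.

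The differential is the sheaf-coboundary adapted to the module structure. We take $d m=\sum_{v}e_{v\sigma}\cdot m+(-1)^{p}\cdots$, i.e.\ the unique map forced by the Leibniz rule $d(\omega m)=d\omega\, m+(-1)^{|\omega|}\omega\, dm$ once we impose $d m_v=\sum_{\text{edges }v\to w\text{ or }w\to v}\pm F(\cdots)$ on degree-zero generators. Concretely, $d$ on $F(\sigma)$ is $\sum_i(-1)^i F(\sigma\le\sigma')$ over cofaces $\sigma'$ of $\sigma$ obtained by inserting a vertex in position $i$, with signs matching the normalized cochain differential. We then check $d^2=0$ and the Leibniz rule.

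\textbf{Verification of $d^2=0$ and Leibniz.} Both reduce to the corresponding identities in $\Omega^\bullet_S$ itself, which is the case of the constant sheaf. When $F$ is the constant sheaf $\Bbbk$, $M_F$ is $\Omega_S^\bullet$ as a left module over itself, so the sign bookkeeping is already verified there. For general $F$, each term in these identities carries a composite of restriction maps along some chain $\sigma\le\cdots\le\sigma''$. By functoriality this composite depends only on the endpoints, so the identities hold coefficientwise exactly as in the constant case.

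\textbf{From modules to sheaves.} A cellular DG-module comes with a decomposition $M^p=\bigoplus_{\sigma\in S_p}M_\sigma$ and support conditions: $e_\tau M_\sigma\subseteq M_{\tau\sigma}$ (or $0$), and the idempotents act by $e_v M_\sigma=M_\sigma$ exactly when $v$ is the first vertex of $\sigma$. We define $F(\sigma)=M_\sigma$. For a codimension-one coface $\sigma\le\sigma'$, define the restriction map as the $M_{\sigma'}$-component of the operator obtained from $d$ and the action. If the new vertex is prepended, it is simply $e_{(w,v_0)}\cdot-$. If the vertex is inserted in the interior or at the end, it is the $M_{\sigma'}$-component of $d$, or of $d$ corrected by the action terms.

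The main obstacle is showing that these codimension-one restriction maps compose consistently along the two paths of every square $\sigma\le\sigma',\sigma''\le\sigma'''$, which is needed for them to assemble into a functor on $P_S$. The compatibility must be extracted from $d^2=0$, the Leibniz rule, and the support conditions. I would first try to project $d^2=0$ onto the $M_{\sigma'''}$-component for each codimension-two pair. The support conditions should leave exactly the two path terms with opposite signs, so that their vanishing gives commutativity. Where the definition of the cellular condition already builds this in, it is immediate.

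\textbf{Equivalence.} Morphisms on both sides are summandwise. A DG-module map respects the idempotents and hence the decomposition, and it commutes with the operators defining restrictions, which makes it a natural transformation. Conversely, a natural transformation gives a DG-module map by construction. Finally, the two constructions are inverse up to natural isomorphism. Going from sheaf to module to sheaf returns $F$ on the nose. Going from module to sheaf to module recovers the action and $d$, because the support conditions force every component of the action and of $d$ to be one of the recovered restriction maps.
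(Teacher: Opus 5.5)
\textbf{Assessment.} Your construction of $M_F$ is the paper's. Your check of $d^2=0$ and of the Leibniz rule by reduction to the constant sheaf is correct, and shorter than the paper's front/tail computation. Every term of either identity on $x\in F(\tau)$ is an integer multiple of $F(\tau\le\zeta)x$ for the unique arrow $\tau\le\zeta$ of $P_S$. For the constant sheaf with value $\Bbbk$, $M_F$ is $\Omega_S^\bullet$ acting on itself, so those integer coefficients vanish. The gaps are in the converse direction and in the morphisms.

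\textbf{Gap 1: no support condition on $d$.} Your notion of cellular module constrains the action but not the differential, and neither Leibniz nor $d^2=0$ forces $d$ onto cofaces. Take $S$ with vertices $a,b,c,d$, edges $[a,b],[a,c],[a,d],[c,d]$ and the single $2$-simplex $[a,c,d]$. Let $M=M_{[a,b]}\oplus M_{[a,c,d]}=\Bbbk\oplus\Bbbk$, with $\delta_a$ acting as the identity, all positive-degree forms acting by zero, and $d\colon M_{[a,b]}\to M_{[a,c,d]}$ the identity. This is a DG-module satisfying all your conditions. But $[a,b]$ has no coface in $S_2$, so $M_{F_M}$ has zero differential and your round trip loses $d$. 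The paper instead makes support of $d$ on codimension-one cofaces an axiom, (S2), and asks of the action only $M(\tau)\subseteq M_{s(\tau)}$. Without (S2), projecting $d^2=0$ to a codimension-two pair also does not leave just the two path terms.

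\textbf{Gap 2: the action is never tied to $d$.} Even with (S2), your claim that ``the support conditions force every component of the action and of $d$ to be one of the recovered restriction maps'' is the real content of the theorem, and you give no argument for it. You define restriction along a prepended vertex via the action and the others via blocks of $d$. Projecting $d^2=0$ then gives commutativity of squares containing a prepending step only if you know $\pi_{e\star\tau}\circ(\omega_e\cdot-)\circ\iota_\tau=D_{\tau\to e\star\tau}$, with $\omega_e$ having no other components. Likewise, the round trip cannot return both the action and the blocks of $d$ with $d_0\zeta=\tau$ unless these agree. The paper proves exactly this in Proposition~\ref{proposition:edge-action-from-front-faces}:
\begin{itemize}
\item it applies Leibniz to $x=\delta_v x$ with $v=s(\tau)$ and expands $d\delta_v$;
\item vertex distinctness shows that the cofaces $\zeta$ with $d_0\zeta=\tau$ leave the $\delta_v$-fiber while all other cofaces stay in it;
\item vertex injectivity isolates a single edge.
\end{itemize}
You must also fix the sign: restriction along $d_i\zeta=\tau$ is $(-1)^iD_{\tau\to\zeta}$, and with (S2) no ``correction by action terms'' is needed.

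\textbf{Gap 3: morphisms are not automatically cellular.} Your claim that a DG-map respects the idempotents ``and hence the decomposition'' is false, because idempotents only detect source vertices. Take $S$ with edges $[a,b],[a,c]$, and sheaves $F,G$ whose only nonzero stalks are $F([a,b])=\Bbbk$ and $G([a,c])=\Bbbk$. Then the identity of $\Bbbk$ is a nonzero DG-map $M_F\to M_G$, while $\mathrm{Nat}(F,G)=0$. This is why the paper restricts to cellular morphisms by definition.
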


This result can be seen as differential refinement of the equivalence between cellular sheaves on \(P_S\) and finite-dimensional right modules over the incidence algebra \(\Bbbk[P_S]\) as discussed in \cite{Ladkani2008DerivedEquivalencesSheavesFinitePosets, Vybornov1999, Yanagawa2005Dualizing}. We also make precise the comparison with the incidence-algebra module at the end of Section \ref{section: cellular sheaves, differential modules}. The differential graded formulation has the additional advantage that differential constructions such as connections and curvature are internal to the same algebraic framework.

We develop this perspective in Sections \ref{section: connections and curvature} and \ref{section: connection sheaves and connections}. For a finite-dimensional left \(A_S\)-modules, we show that left connections relative to $\Omega_S^\bullet$ always exist and are naturally parametrized by families of linear maps $$R_e\colon M_{d_1(e)}\rightarrow M_{d_0(e)}, \ e\in S_1$$Their curvature can be computed explicitly in terms of these maps: on a 2-dimensional simplex $\sigma=[v_0,v_1,v_2]$ its coefficient is the difference $R^\sigma_{02} -R^\sigma_{12} R^\sigma_{01}$, measuring the difference between the direct transport from $v_0$ to $v_2$ and transport through $v_1$.

When the maps \( R_e \) are isomorphisms, the curvature of the connection can be related to a gluing criterion: prescribed vertex spaces and edge transports extend to a connection sheaf on $S$, that is a sheaf on $S$ whose restriction maps associated to the face incidence relations are isomorphisms, if and only if the curvature vanishes on every $2$-dimensional simplex. This is the content of Proposition \ref{proposition: edge transport extension criterion}. Under the standard correspondence between functors on a poset and sheaves on its Alexandrov space \cite[\S 4.2]{curry2014SheavesCosheavesAndApplications}, connection sheaves correspond precisely to locally constant sheaves on \(X_{P_S}\). Equivalently, they may be described as finite-dimensional quasi-coherent modules over the constant structure sheaf \( \underline{\Bbbk}_{X_{P_S}}\) \cite[Theorem~3.6]{Sancho2018HomotopyFiniteRingedSpaces}. If \(S\) is obtained from a finite ordered good cover of a locally path-connected space \(M\), sheaf descent relates these objects to \(\Bbbk\)-local systems on \(M\) \cite{Vistoli2005Descent}. For a smooth manifold and \(\Bbbk=\mathbb R\) or \(\mathbb C\), this leads to the usual correspondence with flat vector bundles \cite[\S9.2]{Voisin_I}. These correspondences are discussed at the end of section \ref{section: connection sheaves and connections}.

Our construction is also related to two recent works \cite{fioresi2026gluing,fioresi2026sheaves} of Fioresi, Simonetti, and Zanchetta. In \cite{fioresi2026sheaves}, cellular sheaves on graphs are related to discrete differential calculi, and the corresponding sheaf, transport, and noncommutative Laplacians are compared under suitable hypotheses. In \cite{fioresi2026gluing}, gluing data are encoded by finite ringed spaces associated with two-dimensional semisimplicial sets, with the aim of reconstructing schemes and with possible extensions to differentiable settings. This work also starts from the graph-calculus correspondence underlying the former, extends it to higher-dimensional semisimplicial sets, and uses the finite-space language of the latter to interpret connection sheaves, without addressing the corresponding reconstruction problem that would require giving to our cellular sheaf a richer structure.

Although the paper is purely theoretical and does not develop machine learning algorithms, part of its motivation comes from recent developments in geometric and topological deep learning. One recent direction extends graph-based constructions and algorithms to higher-dimensional combinatorial domains, including simplicial and regular cell complexes \cite{BodnarEtAl2021CWN, BodnarEtAl2021MPSN, EbliEtAl2020SimplicialNN,Papamarkou24, PapillonEtAl2023ArchitecturesTDL}; another enriches the coefficient system by replacing scalar data with cellular sheaves. The spectral theory of cellular sheaves and their Laplacians was developed by Hansen and Ghrist in \cite{HansenGhrist2019Spectral}, and subsequent work introduced sheaf neural networks and neural sheaf diffusion \cite{bodnar2022NeuralSheafDiffusion,HansenGebhart2020SheafNN, HansenGhrist2019LearningSheafLaplacians}, together with several extensions \cite{Battiloro2022TangentBundleFilters,borgi2025polynomial,Braithwaiteetal2024,HypergraphNeuralSheafDiffusionSimplicialSetframework,Duta2025SheafHypergraphNetworks,Zaghen2024SheafDiffusionNonlinear}. These two developments modify complementary parts of the graph setting: higher-order architectures enrich the underlying incidence structure, whereas sheaf-based architectures enrich its local coefficient system. Theorems A and B package these two ingredients as a differential graded algebra and a cellular DG-module over it. For a broader account of (cellular) sheaf theory in representation learning, we refer to \cite{Ayzenberg2025SheafTheory}.

The paper is organized as follows. Section \ref{section:preliminaries-and-conventions} fixes notation and recalls the required background on semisimplicial sets, cellular sheaves, and differential calculi. Section \ref{section: from semisimplicial sets to differential graded algebras} proves Theorem \ref{theorem: A - for the introduction}, while Section \ref{section: cellular sheaves, differential modules} proves Theorem \ref{theorem: B - for the introduction} and compares the construction with the incidence-algebra description. Section \ref{section: connections and curvature} develops connections and curvature, while Section \ref{section: connection sheaves and connections} discusses connection sheaves, the curvature obstruction to their extension from edge data, and the resulting geometric interpretations.

\begin{adjustwidth}{1.5em}{1.5em}
\noindent\textbf{Notation.} We record here some notation used throughout the paper for the convenience of the reader.
\end{adjustwidth}
\begin{adjustwidth}{1.5em}{2.0em}
\begin{itemize}[nosep]
\item We denote by \(\Delta\)
the simplex category. Its objects are the finite ordered sets
\([n]=\{0\to 1\to\ldots\to n\},\ n\in \mathbb N,\)
and its morphisms are the order-preserving maps. We denote by
\(\Delta^+\subset \Delta\) the subcategory whose morphisms are the
order-preserving injections. For \(n\in \mathbb N\), we write
\(\Delta_n\subset \Delta,
\ \Delta_{n}^+\subset \Delta^+\)
for the full subcategories on the objects \([0],\ldots,[n]\).
\item We write \(\mathsf{Sets}\) for the category of sets and \(\mathrm{Vect}_\Bbbk\) for the
category of finite-dimensional vector spaces over the base field \(\Bbbk\) for \(\Bbbk\) a field of characteristic different from \(2\).
\item If
\(\cC\) is a category, we denote its opposite category by
\(\cC^{\mathrm{op}}\). For categories \(\cA,\cB\), we write
\(\operatorname{Fun}(\cA,\cB)\)
for the category of covariant functors \(\cA\to\cB\), and
\(\operatorname{Pre}(\cA,\cB) :=
\operatorname{Fun}(\cA^{\mathrm{op}},\cB) \)
for the category of contravariant functors. We refer to objects of
\(\operatorname{Pre}(\cA,\cB)\) as \(\cB\)-valued presheaves
on \(\cA\). When \(\cB=\mathsf{Sets}\), or when the target category is
clear from context, we simply write \(\operatorname{Pre}(\cA)\).
\item A preorder on a set is a reflexive and transitive binary relation. A partial order is
an antisymmetric preorder. A set equipped with a partial order will be called a
poset.
\item For a topological space $X$ and a set $A$ (or abelian group, etc), we denote by $\underline{A}$ the constant sheaf on $X$ associated to $A$.
\end{itemize}
\end{adjustwidth}

\section*{Acknowledgements}
Both authors would like to thank Rita Fioresi and Angelica Simonetti for their support and for stimulating discussions about topics related to this paper. The first author gratefully acknowledges the hospitality of Jan Slovák, Katharina Neusser, and Erik Bekkers during research visits to their respective institutions undertaken while this work was in preparation. The second author would like to thank Thomas Weber and Francesco Vaccarino for stimulating discussions on topics related to this work. The first author was supported by MSCA-DN CaLiForNIA, Project ID: 101119552. The second author acknowledges support by GNSAGA-Indam, INFN Gast Initiative,
PNRR MNESYS, PNRR National Center
for HPC, Big Data and Quantum Computing CUP J33C22001170001, PNNR SIMQuSEC
CUP J13C22000680006. 
This work was also supported by Horizon Europe EU projects MSCA-SE CaLIGOLA,
Project ID: 101086123, COST Action CaLISTA CA21109.

\section{Preliminaries and conventions}\label{section:preliminaries-and-conventions}
In this section, we collect the background notions used throughout the paper. We first recall the semisimplicial language underlying our construction, together with the face-poset and Alexandrov descriptions of cellular sheaves. We then review first-order differential calculi,
their extension to exterior differential graded algebras, and the graph-calculus correspondence that motivates the higher-dimensional construction of \S\ref{section: from semisimplicial sets to differential graded algebras}.

For background on simplicial and semisimplicial sets, see, for instance, \cite{Friedman2021Simplicial, GoerssJardine1999}. Additional context for the free degeneracy completion can be found in \cite{EbertRandalWilliams2019SemiSimplicial}. For cellular sheaves on face posets, finite topological spaces, and the incidence-algebra perspective, we
refer to  \cite{Ayzenberg2025SheafTheory, Barmak_2011finitetopologicalspacesandapplications,curry2014SheavesCosheavesAndApplications, Ladkani2008DerivedEquivalencesSheavesFinitePosets} For differential calculi and differential graded algebras, see, for example, \cite{BeggsMajid2020QRG,Landi1997Introduction}.

\subsection{(Semi)simplicial sets, Alexandrov topologies, and cellular sheaves}\label{subsec_semisimplicial-sets}

\begin{definition}
The category of simplicial sets (resp. semisimplicial sets) is given by \( \mathsf{sSet} :=\operatorname{Pre}(\Delta, \mathsf{Sets}),\  \text{(resp. } \mathsf{ssSet}:=\operatorname{Pre}(\Delta^+,\mathsf{Sets}).\)) If \(X\) is a simplicial or semisimplicial set, we write
\( X_m:=X([m]) \) and refer to elements of \(X_m\) as the \emph{\(m\)-simplices of \(X\)}. We say that a semisimplicial set \(X\) has dimension \(n\) if \(X_n \neq \emptyset \) and \(X_m = \emptyset\) for all \( m > n \). We define \emph{\( n \)-truncated simplicial and \( n \)-truncated semisimplicial sets} by \(\mathsf{sSet}_{\leq n}:=\operatorname{Pre}(\Delta_n,\mathsf{Sets}), \text{ and } \mathsf{ssSet}_{\leq n}:=\operatorname{Pre}(\Delta_{n}^+,\mathsf{Sets}),\) respectively.
\end{definition}
Directed graphs can be defined in terms of semisimplicial sets.

\begin{example}\label{example: Graph as 1 simplicial set}
A directed graph \(G=(V,E)\) is a \(1\)-truncated semisimplicial set, with \( G_0=V,\ G_1=E, \) and face maps \(d_1(e)=\operatorname{source}(e),  \ d_0(e)=\operatorname{target}(e).\) Conversely, any object of \(\operatorname{Pre}(\Delta_{1}^+,\mathsf{Sets})\) determines a directed graph in this way. If the map \[ (d_1,d_0)\colon G_1\rightarrow G_0\times G_0 \] is injective, one recovers directed graphs with at most one edge from a given source to a given target. In this case, we say that our graph is \emph{simple}. We denote the category of finite simple graphs having exactly one self-loop for each vertex as $\digraphs$.
\end{example}

\begin{excerpt}
Let \(i\colon \Delta^+ \hookrightarrow \Delta \) denote the inclusion. Restriction along \(i\) defines the forgetful functor \( U=i^\ast\colon \mathsf{sSet}\rightarrow \mathsf{ssSet}\), \(X\to X\circ i.\) It admits a left adjoint \(L\colon \mathsf{ssSet} \rightarrow \mathsf{sSet},\) called the \emph{free degeneracy completion}. Explicitly, for \( S \in \mathsf{ssSet}\), \[ L(S)_p = \coprod_{\pi\colon [p]\twoheadrightarrow [q]} S_q,\] where the coproduct is taken over all order-preserving surjections in \(\Delta\) (see also \cite{EbertRandalWilliams2019SemiSimplicial, piccinini}). For \(S\in\mathsf{ssSet}\), we write
\(\widehat S:=UL(S).\) The same construction in degrees at most \(n\) gives an adjunction $L_{\leq n}\dashv U_{\leq n}\colon
\mathsf{ssSet}_{\leq n}
\rightleftarrows \mathsf{sSet}_{\leq n}$. When working in the truncated categories, we suppress the subscript \({\leq n}\) and write \(L\) and \(U\) for \(L_{\leq n}\) and
\(U_{\leq n}\), respectively. Accordingly, for \(S\in\mathsf{ssSet}_{\leq n}\), the notation
\(\widehat S:=UL(S)\) means \(U_{\leq n}L_{\leq n}(S)\).
\end{excerpt}

\begin{excerpt}\label{excerpt: vertex map} 
Let \(S\in \mathsf{ssSet}\). For \(\sigma \in S_p\) and \(0 \le i \le p\), define the \(i\)-th vertex of \(\sigma\) by \( v_i(\sigma) := S(\alpha_i)(\sigma)\in S_0, \) where \( \alpha_i \colon [0] \hookrightarrow[p] \) is the unique order-preserving injection with image \(\{i\}\). The \emph{vertex map} in degree \(p\) is \[ \operatorname{vert}_p\colon S_p \rightarrow (S_0)^{p+1}, \quad \sigma \rightarrow \bigl(v_0(\sigma), \ldots,v_p(\sigma)\bigr).\] We say that \(S\) is \emph{vertex-injective in degree \(p\)} if \(\operatorname{vert}_p\) is injective, and
\emph{vertex-injective} if it is vertex-injective in every degree. In later sections, we will also write \(s(\sigma)\) (\emph{source}) and \(t(\sigma)\) (\emph{tail}) for \(v_0(\sigma)\) and \( v_p(\sigma)\), respectively.
\end{excerpt}

\begin{definition}\label{definition:non-singular-complexes} Let \( \ssreg \) denote the full subcategory of semisimplicial sets \(S\in \operatorname{Pre} (\Delta^+,\mathsf{Sets}) \) such that
\begin{enumerate}[nosep]
    \item $S$ is finite, meaning that $\bigsqcup_pS_p$ is a finite set.
    \item \(S\) is vertex-injective.
    \item Every positive-dimensional simplex has pairwise distinct vertices.
\end{enumerate} Let \(\ssreg_{\leq n}\subset\mathsf{ssSet}_{\leq n}\) be the full
subcategory defined by conditions {\rm (1)--(3)} in degrees \( 0 \leq p \leq n\). We refer to the objects of these categories as \emph{non-singular complexes}. Let \(\ssregh\subseteq\mathsf{ssSet}\) and \(\ssregh_{\leq n}\subseteq\mathsf{ssSet}_{\leq n}\) be the full subcategories spanned by the objects \(\widehat S\), for \(S\in\ssreg\) and \(S\in\ssreg_{\leq n}\), respectively. We call their
objects \emph{extended non-singular complexes}.
\end{definition}

Since the vertex maps of every \(S\in\mathsf{SS}^{\mathrm{ns}}\) are injective, we shall freely regard \(S_p\) as its image under \( \operatorname{vert}_p:S_p\hookrightarrow(S_0)^{p+1}.\) Thus, if \(v=(v_0, \ldots,v_p)\) belongs to this image, we write \([v]=[v_0,\ldots,v_p]\) for the unique simplex having ordered vertex tuple \(v\). Parentheses will be used for ordered vertex words, while brackets indicate the corresponding simplex.

\begin{remark}\label{remark:unique-lift-simplicialmap} Let \(S,T\in\ssreg_{\leq n}\) with \(\widehat S=UL(S)\) and \(\widehat T=UL(T)\). One can see that simplices of \(\widehat T\) are uniquely determined by their repeated vertex words, where repetitions only occur in consecutive blocks. Although \(\widehat g\) is only assumed to be semisimplicial, its underlying degree maps automatically commute with the degeneracy
maps of \(L(S)\) and \(L(T)\). Indeed, let \(\epsilon_i\colon[p+1]\twoheadrightarrow[p]\) be the codegeneracy defining \(s_i\). Since a semisimplicial map commutes with every vertex map, for every
\(\sigma\in L(S)_p\) and \(0 \leq j\leq p+1\), we have \[ v_j\bigl(\widehat g_{p+1}(s_i\sigma)\bigr) =\widehat g_0\bigl(v_j(s_i\sigma)\bigr)= \widehat g_0\bigl(v_{\epsilon_i(j)}(\sigma)\bigr) = v_{\epsilon_i(j)}\bigl(\widehat g_p(\sigma)\bigr) =v_j\bigl(s_i\widehat g_p(\sigma)\bigr).\] Hence \(\widehat g(s_i\sigma)=s_i\widehat g(\sigma)\) so \(\widehat g\) is the underlying semisimplicial map of a unique simplicial map \(g\colon L(S)\rightarrow L(T).\) Together with the adjunction \( L \dashv U\), this gives the natural bijections \[ \operatorname{Hom}_{ \mathsf{ssSet}}( \widehat S, \widehat T) \cong \operatorname{Hom}_{\mathsf{sSet}}( L(S), L(T)) \cong \operatorname{Hom}_{\mathsf{ssSet}}(S,UL(T)).\]\end{remark}

\begin{definition}\label{definition: face poset, hasse diagram associated to a simplicial set}
Let \(S\in\ssreg_{\le n}\). Its \emph{face poset} \(P_{S} \) is \(\bigsqcup_{p=0}^n S_p\) with \( \sigma \leq \tau\) if and only if \( \sigma \) is obtained from \( \tau \) by applying a sequence of face maps. The \emph{Hasse diagram} \(\mathcal H(S)\) is the directed graph whose vertices are the elements of \(P_{S}\), with an arrow \( \sigma \rightarrow \tau \) whenever \( \sigma \) is a codimension-one face of \(\tau\).
\end{definition}

\begin{definition}\label{definition: cellular (co)sheaves on a simplicial set}
Let \(S\in\ssreg_{\leq n}\). A \emph{cellular sheaf}
on \(S\) is a covariant functor \( F\colon P_{S} \rightarrow \mathrm{Vect}_\Bbbk. \) A morphism of cellular sheaves is a natural transformation. We denote the resulting category by \( \mathsf{Sh}(S). \)
\end{definition}

\begin{excerpt}\label{excerpt:assignments for alexandrov and preorders}
The data of a cellular sheaf on \(S\) is equivalent to the datum of a sheaf on the topological space \(X_{P_S}\) obtained by considering $P_{S}$ endowed with the Alexandrov topology of upper order ideals, that is the topology generated by the base $\cB_S$ consisting of the following subsets of $ P_{S}$ (see \cite{Ayzenberg2025SheafTheory, curry2014SheavesCosheavesAndApplications, fioresi2026sheaves, Sancho2018HomotopyFiniteRingedSpaces}) :
\[ U_\sigma:=\{\tau\in P_{S}\mid \sigma\leq\tau\},\qquad \sigma\in P_{S}.\]
Since every open cover of \(U_\sigma\) by elements of \(\cB_S\) contains
\(U_\sigma\) itself, every presheaf on the basis $\cB_S$ automatically satisfies the sheaf condition on this basis, see \cite{vakil} for this notion. As a consequence, we have the following standard result (see \cite{Ayzenberg2025SheafTheory, fioresi2026sheaves} for more general discussions).

\begin{proposition}\label{proposition: cellular sheaves Alexandrov equivalence}
The assignments from \ref{excerpt:assignments for alexandrov and preorders} define a natural equivalence \(\mathsf{Sh}(S) \simeq \mathsf{Sh}(X_{P_S},\mathrm{Vect}_\Bbbk),\) where $\mathsf{Sh}(X_{P_S},\mathrm{Vect}_\Bbbk)$ denotes the category of sheaves of vector spaces on $X_{P_S}$.
\end{proposition}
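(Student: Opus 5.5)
We establish the equivalence \(\mathsf{Sh}(S)\simeq\mathsf{Sh}(X_{P_S},\mathrm{Vect}_\Bbbk)\) by passing through presheaves on the basis \(\cB_S\). First we note that \(\sigma\mapsto U_\sigma\) is an order-reversing bijection from \(P_S\) onto \(\cB_S\), where \(\cB_S\) is ordered by inclusion. Indeed, \(\sigma\le\tau\) if and only if \(U_\tau\subseteq U_\sigma\). The forward direction is transitivity. For the converse, \(\tau\in U_\tau\subseteq U_\sigma\) gives \(\sigma\le\tau\). Antisymmetry of \(P_S\) then shows the map is injective. Antisymmetry itself holds because face maps strictly lower dimension, so \(\sigma\le\tau\le\sigma\) forces \(\sigma=\tau\). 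Consequently, a covariant functor \(F\colon P_S\to\mathrm{Vect}_\Bbbk\) is exactly a presheaf \(\widetilde F\) on the poset \(\cB_S\), via \(\widetilde F(U_\sigma):=F(\sigma)\) and restriction \(\widetilde F(U_\sigma)\to\widetilde F(U_\tau)\) given by \(F(\sigma\le\tau)\). The same dictionary matches natural transformations on both sides, so \(\mathsf{Sh}(S)\cong\operatorname{Pre}(\cB_S,\mathrm{Vect}_\Bbbk)\) as categories.

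Next we check two facts about \(\cB_S\). It is a basis of the Alexandrov topology, since every upper set \(W\) equals \(\bigcup_{\sigma\in W}U_\sigma\). Each \(U_\sigma\) is also the smallest open set containing \(\sigma\). From the second fact, any cover of \(U_\sigma\) by basic opens must include some \(U_\tau\ni\sigma\), so \(\tau\le\sigma\) and hence \(U_\tau\supseteq U_\sigma\), which forces \(U_\tau=U_\sigma\). Thus every cover of a basic open by basic opens contains the set itself. For such a cover the sheaf condition holds automatically: a compatible family is determined by its component on \(U_\sigma\), and that component is a section which restricts correctly to every other member of the cover. So \(\operatorname{Pre}(\cB_S,\mathrm{Vect}_\Bbbk)=\mathsf{Sh}(\cB_S,\mathrm{Vect}_\Bbbk)\).

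Finally, we apply the standard comparison lemma \cite{vakil}: restricting sheaves to a basis gives an equivalence between sheaves on \(X_{P_S}\) and sheaves on the basis. Its quasi-inverse sends \(\widetilde F\) to \(W\mapsto\varprojlim_{U_\sigma\subseteq W}\widetilde F(U_\sigma)\). Composing the three steps yields the equivalence. Naturality comes from the fact that each step is defined functorially, with explicit unit and counit. The one point that needs care is checking that the extended object on a non-basic open \(W\) is finite-dimensional. This follows because \(P_S\) is finite, so the limit is a finite limit of finite-dimensional spaces, computed inside the vector spaces and landing in \(\mathrm{Vect}_\Bbbk\). I expect this finite-dimensionality issue, together with handling the category of values, to be the only mild obstacle; everything else is formal.
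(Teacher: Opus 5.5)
Your proposal is correct and follows essentially the same route as the paper. The paper identifies functors on \(P_S\) with presheaves on the basis \(\cB_S=\{U_\sigma\}\), observes that every basic cover of \(U_\sigma\) contains \(U_\sigma\) (so the sheaf condition on the basis is automatic), and then invokes the basis-extension result from \cite{vakil}; your extra checks (antisymmetry, minimality of \(U_\sigma\), and finite-dimensionality of the extended sections over the finite poset) simply make explicit details the paper leaves implicit.
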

The Alexandrov perspective bridges the diagrammatic definition of cellular sheaves to sheaves on a topological space, and thereby to finite ringed spaces, quasi-coherent modules, and gluing data in the sense of \cite{fioresi2026gluing, fioresi2026sheaves,Sancho2017FiniteSpacesSchemes, Sancho2018HomotopyFiniteRingedSpaces}. These comparisons are discussed in \S \ref{section: connection sheaves and connections}.
\end{excerpt}

\begin{remark}
The face poset is attached to \(S\), not to \(\widehat S\). Indeed, the additional simplices introduced by the free degeneracy completion do not represent new geometric cells. Including them in the face poset would produce additional open subsets in the associated Alexandrov space without adding new geometric information.
\end{remark}

\subsection{Differential calculi and differential graded algebras}
We recall first-order differential calculi and their extension to exterior
differential graded algebras. Throughout this subsection, \(A\) denotes a unital \(\Bbbk\)-algebra, not necessarily commutative.

\begin{definition} A \emph{first-order differential calculus} (FODC) \((\Omega^1_A,d)\) on \(A\) consists of an \(A\)\nobreakdash–\(A\)-bimodule \(\Omega_A^1\) together with a \(\Bbbk\)-linear \emph{exterior derivative}
\( d\colon A \rightarrow \Omega^1_A\), satisfying the \emph{Leibniz rule}
\[d(fg)=(df)g + f(dg),
\quad \forall f,g\in A,\] and the \emph{surjectivity condition}, which states that \(\Omega_A^1\) is spanned by elements of the form \( f dg\), for \(f,g\in A\). A \emph{morphism of first-order differential calculi}
\((A, \Omega^1_A,d_A)
\rightarrow
(B, \Omega^1_B,d_B)\)
is given by an algebra morphism \(\varphi\colon A\rightarrow B\) together with an \(A\)\nobreakdash–\(A\)-bimodule map \( \varphi^* : \Omega^1_A \rightarrow\Omega^1_B\)
such that \(\varphi^* \circ d_A= d_B \circ \varphi.\) Here \(\Omega^1_B\) is seen as an \(A\)\nobreakdash–\(A\)-bimodule via \(\varphi\). Additionally, a FODC \(\Omega^1_A\) is called \emph{inner} if there exists an element \(\theta\in\Omega^1_A\) such that
\(d a  =  [\theta,a] = \theta a - a\theta, 
\text{ for any }a \in A\). It is called \emph{connected} if
\(\ker(d) = \Bbbk\cdot 1_A\).

\end{definition}
We usually omit the differential from the notation and write \(\Omega^1_A\) or \(\Omega^1\) if the underlying algebra is clear. The category with FODCs over algebras with arrows given by morphisms as defined above is denoted by \( \mathsf{DC}\). The following is well-known, see for instance \cite[Prop 1.5]{BeggsMajid2020QRG}.
\begin{lemma}\label{lemma: universal calculus}
Let \(A\) be a \(\Bbbk\)-algebra. Then:
\begin{enumerate}[nosep]
\item \(A\) admits a universal connected FODC \((\Omega^1_{\mathrm{uni}}(A),d_{\mathrm{uni}})\) given by
\[
\Omega^1_{\mathrm{uni}}(A)  \coloneq  \ker(\mu)\subseteq A\otimes_\Bbbk A,
\qquad d_{\mathrm{uni}}(a)=1_A\otimes a -a\otimes 1_A, \quad \forall a\in A, \]
where \(\mu\colon A\otimes_\Bbbk A\rightarrow A\) denotes the algebra multiplication.
\item Any FODC \(\Omega^1_A\) is isomorphic to a quotient \(\Omega^1_{\mathrm{uni}}(A)/N\), for some sub-bimodule \(N\subseteq\Omega^1_{\mathrm{uni}}(A)\), with differential induced by \(d_{\mathrm{uni}}\). 
\end{enumerate}
\end{lemma}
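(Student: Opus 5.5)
We prove the two parts in order, working entirely inside $A\otimes_\Bbbk A$ with its $A$--$A$-bimodule structure $a(x\otimes y)b=ax\otimes yb$.

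\textbf{Part (1): $d_{\mathrm{uni}}$ is a connected FODC.} First, $d_{\mathrm{uni}}(a)=1\otimes a-a\otimes 1$ lies in $\ker\mu$, since $\mu(d_{\mathrm{uni}}(a))=a-a=0$. Second, $\ker\mu$ is a sub-bimodule, because $\mu$ is a bimodule map.

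The Leibniz rule is a one-line expansion:
\[
(d_{\mathrm{uni}}a)b+a(d_{\mathrm{uni}}b)=(1\otimes ab-a\otimes b)+(a\otimes b-ab\otimes 1)=d_{\mathrm{uni}}(ab).
\]

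For surjectivity, take $\sum_i x_i\otimes y_i\in\ker\mu$, so $\sum_i x_iy_i=0$. Then
\[
\sum_i x_i\,d_{\mathrm{uni}}(y_i)=\sum_i x_i\otimes y_i-\Bigl(\sum_i x_iy_i\Bigr)\otimes 1=\sum_i x_i\otimes y_i .
\]

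For connectedness, suppose $1\otimes a=a\otimes 1$. Choose a basis of $A$ containing $1_A$ and compare the coefficients of the two tensors; this forces $a\in\Bbbk\cdot 1_A$. The converse inclusion $\Bbbk\cdot 1_A\subseteq\ker d_{\mathrm{uni}}$ is immediate.

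\textbf{Part (2): every FODC is a quotient of the universal one.} Given an FODC $(\Omega^1_A,d)$, define
\[
\pi\colon\Omega^1_{\mathrm{uni}}(A)\to\Omega^1_A,\qquad \pi\Bigl(\sum_i x_i\otimes y_i\Bigr)=\sum_i x_i\,dy_i .
\]
Concretely, $\pi$ is the restriction to $\ker\mu$ of the $\Bbbk$-linear map $A\otimes A\to\Omega^1_A$, $x\otimes y\mapsto x\,dy$. That map is well defined because $(x,y)\mapsto x\,dy$ is bilinear.

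We now check that $\pi$ has the required properties.
\begin{itemize}
\item \emph{Left $A$-linearity} is clear from the formula.
\item \emph{Right $A$-linearity.} Leibniz gives $x\,d(yb)=x(dy)b+xy\,db$. Summing over an element of $\ker\mu$, the last term contributes $\bigl(\sum_i x_iy_i\bigr)db=0$. Hence $\pi(\omega b)=\pi(\omega)b$.
\item \emph{Compatibility with differentials.} $\pi(d_{\mathrm{uni}}a)=1\,da-a\,d1=da$, using $d1=0$, which follows from Leibniz applied to $1\cdot 1$.
\item \emph{Surjectivity.} The surjectivity condition says $\Omega^1_A$ is spanned by elements $f\,dg$. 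Each of these equals $\pi(f\,d_{\mathrm{uni}}g)$.
\end{itemize}

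Set $N=\ker\pi$, which is a sub-bimodule since $\pi$ is a bimodule map. Then $\pi$ induces a bimodule isomorphism $\Omega^1_{\mathrm{uni}}(A)/N\cong\Omega^1_A$. Under this isomorphism the induced differential $a\mapsto[d_{\mathrm{uni}}a]$ corresponds to $d$.

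The only step needing any care is right linearity of $\pi$, which is exactly where the condition $\sum_i x_iy_i=0$ is used; every other step is a direct check.
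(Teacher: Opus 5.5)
Your proof is correct and complete. The paper gives no argument of its own here; it only cites Beggs--Majid (Prop.~1.5), and your route is the standard one found there. Leibniz and surjectivity follow by direct expansion in $A\otimes_\Bbbk A$. Connectedness follows by comparing coefficients, which is equivalent to applying $\mathrm{id}\otimes\phi$ for a functional with $\phi(1_A)=1$. Part (2) uses the comparison map $x\otimes y\mapsto x\,dy$, and its right $A$-linearity is exactly where $\sum_i x_iy_i=0$ enters.
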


\begin{example}\label{example: graph differential calculus}
A fundamental example for this text is the equivalence between finite directed graphs and (FODCs) on algebras of functions over finite sets. We outline the correspondence and come back to this in more detail in \(\S\ref{section: from semisimplicial sets to differential graded algebras}\). Let \(V\) be a finite set and \(A_V = \Bbbk(V)\) the algebra of \(\Bbbk\)\nobreakdash-valued functions on \(V\) with pointwise multiplication, i.e. \(A_V \coloneq \text{span}_\Bbbk\{\delta_x \mid x\in V\},\) where \(
\delta_x(y) = \begin{cases}
1,&y=x,\\
0,&y\neq x
\end{cases}\) are orthogonal idempotents. Let \(G=(V,E)\) be a simple directed graph admitting exactly one self-loop per vertex. Write \(\overline{E}\) for the collection of non-self loops, i.e. \( \overline{E} = \{(x,y)\in E, \ x\neq y\}\). Then \(\Omega_G^1  : = \text{span}_\Bbbk\{\omega_{x\rightarrow y}\mid (x,y)\in \overline{E}\}\) is an \(A_V\)\nobreakdash–\(A_V\)-bimodule by \[ f\cdot\omega_{x\rightarrow y} = f(x) \omega_{x\rightarrow y},
\quad \omega_{x\rightarrow y}\cdot f = \omega_{x\rightarrow y}f(y), \quad f\in A_V.\]
The differential \( d\colon A_V\rightarrow \Omega_G^1\) is defined on a basis element \(\delta_x\) by
\[ d \delta_x = \sum_{\substack{y\rightarrow x \in \overline{E}}}\omega_{y \rightarrow x}
-\sum_{\substack{x\rightarrow y \in \overline{E} }}\omega_{x\rightarrow y}, \]
or, for \(f = \sum_{x} f(x) \delta_x\in A_V\),  \( df =\sum_{x\rightarrow y\in \overline{E}}\bigl(f(y)-f(x)\bigr)\omega_{x\rightarrow y}.\) One verifies that \( d \) satisfies the Leibniz rule, and, since \(\delta_x\cdot d(\delta_y)=\omega_{x\rightarrow y}\) for any \((x\rightarrow y)\in \overline{E}\), \( \Omega^1_G \) also satisfies the surjectivity condition. Hence, \(\Omega_G^1\) is a FODC on \(A_V\). Moreover, this differential calculus is inner, since \(\theta \coloneq \sum_{(x\rightarrow y)\in \overline{E}}\omega_{x\rightarrow y}\) satisfies
\(d a=[\theta,a]\) for all \(a\in A_V\). 

Conversely, any FODC \((\Omega^1,d) \) over \(A_V\) determines a directed graph. Indeed, the vertex set is \(V\) and can be recovered from the algebra \(A_V\), while \[ \overline{E}_\Omega \coloneq \{(x,y)\in V\times V\mid x\neq y,\  \Omega^1_{x,y}\neq 0\},\] where \(\Omega^1_{x,y} \coloneq \delta_x\cdot \Omega^1\cdot \delta_y\). The surjectivity condition implies that \( \dim_{\Bbbk}\bigl(\Omega^1_{x,y}\bigr)\leq 1\) for \(x\neq y\) (an explicit proof is given in \ref{lemma: fibers of FODC are 1 dimensional} below), while \(\Omega^1_{x,x}=0\). Thus \(\overline{E}_\Omega\) defines a simple directed graph without loops. Adjoining the self-loops \((x,x)\) at every \( x \in V\) produces the corresponding object of \( \digraphs \). Moreover, the differential is \emph{canonical} in the following sense. For every $ (x,y) \in \overline{E}_\Omega $, the element \( \omega_{x\rightarrow y}\  \coloneq \ \delta_x d(\delta_y) \in \Omega^1_{x,y}\) spans \(\Omega^1_{x,y}\), and the idempotent identities combined with $d(1_{A_V})=0$ force the incidence differential
\begin{equation}\label{equation: where the differential of a FODC lives to in terms of idempotent decomps}
d(\delta_t) = \sum_{(y,t)\in \overline{E}}\omega_{y\rightarrow t}-\sum_{(t,y)\in \overline{E}}\omega_{t\rightarrow y}\in \big(\bigoplus_{u}\Omega^1_{u, t} \big) \oplus \big(\bigoplus_{w} \Omega^1_{t,w}\big),
\quad \text{for all }t\in V.
\end{equation}
These assignments give an anti-equivalence of categories. Recall that we denote by \(\digraphs\) the category whose objects are finite directed graphs \((V,E)\) with at most one arrow for each ordered pair of vertices, and where each vertex comes equipped with a self‑loop. Morphisms \(\psi\colon(W,F)\to(V,E)\) in \(\digraphs\) are maps \(\psi\colon W\to V\) such that
\[ (\psi(w),\psi(z))\in E \qquad  \text{for every }(w,z)\in F. \] Since every vertex carries a self-loop, this is equivalent to requiring that, for every \((w,z)\in F\), either \(\psi(w)=\psi(z)\) or \((\psi(w),\psi(z))\in\overline E\). Self-loops are present in the graph category but are not realized as 1-forms. Their role is to express edge-collapsing morphisms as maps that send arrows to arrows. As shown in \cite[\S5]{Majid2012Graph}, the above assignment \( G \rightarrow \Omega^1_G\) extends to a fully faithful functor \(F: (\digraphs)^\mathrm{op}\rightarrow \mathsf{DC}\). Its essential image is the full subcategory of FODCs over algebras of functions on finite sets. Consequently, it induces an anti-equivalence between \(\digraphs\) and this subcategory.
\end{example}

\begin{definition}\label{definition: DGA}
Let \(A\) be a \(\Bbbk\)‑algebra. A \emph{differential graded algebra} (DGA) \((\Omega^\bullet,d,\wedge)\) on \(A=\Omega^0\) consists of a graded \(\Bbbk\)-vector space \( \Omega^\bullet =\bigoplus_{n\ge0}\Omega^n,\) with a unital associative, graded product \( \wedge \colon \Omega^p \otimes_\Bbbk \Omega^q \rightarrow \Omega^{p+q},\)
together with a \(\Bbbk\)‑linear differential \(d\colon \Omega^\bullet \rightarrow \Omega^{\bullet+1}\) satisfying \(d^2 = 0\) and the \emph{graded Leibniz rule} \[ d(\alpha \wedge\beta)
= (d\alpha)\wedge\beta + (-1)^{\lvert\alpha\lvert}\alpha\wedge(d\beta) ,
\quad \text{for all homogeneous } \alpha,\beta \in \Omega^\bullet . \] A \emph{morphism} of DGAs 
\(\varphi\colon(\Omega_A^\bullet,d_A,\wedge_A)\rightarrow(\Omega_B^\bullet,d_B,\wedge_B)\)
is a unital degree‑preserving algebra morphism that commutes with the differential. 
The category of all DGAs and their morphisms is denoted \(\mathsf{DG}\). For \(n\geq 0\), \( \mathsf{DG}_n \subseteq\mathsf{DG}\) denotes the full subcategory of DGAs of degrees at most \(n\).
\end{definition}
As \((\Omega^\bullet, \wedge)\) is an algebra, all \(\Omega^p\) are naturally \(A\)\nobreakdash–\(A\)-bimodules by \(a \cdot \alpha \cdot b= a\wedge \alpha \wedge b \in \Omega^p\). Associativity also readily gives that the wedge product is \(A\)-balanced. 

\begin{definition}\label{definition: graded commutative and exteriority of a DGA}
A differential graded algebra \(\Omega^\bullet\) is \emph{graded commutative} if \(\alpha\wedge \beta =\ (-1)^{\lvert \alpha\lvert \lvert \beta\lvert }\beta\wedge \alpha, \text{ for all homogeneous } \alpha, \beta \in \Omega^\bullet\). It is \emph{exterior} if \(\Omega^1\) satisfies the surjectivity condition, and the iterated wedge 
\[\wedge\colon (\Omega^1)^{\otimes_An}
\rightarrow \Omega^n \] is surjective for each \(n\ge1\). 
For \((\Omega^\bullet, \wedge,d)\) a DGA, a \emph{differential graded ideal} \(I^\bullet \subseteq\Omega^\bullet\) is a graded two-sided ideal such that \[
\Omega^p \wedge I^q \subseteq I^{p+q},\quad
I^p \wedge \Omega^q \subseteq I^{p+q},\quad
d(I^q)\subseteq I^{q+1}\quad\text{for all }p,q\ge 0.
\]
A \emph{differential graded left $\Omega^\bullet$-module} (DG-module) is a graded left $\Omega^\bullet$-module \((M^\bullet, \cdot, d_M )\), where \(d_M: M^\bullet \rightarrow M^{\bullet+1}\) is a degree one \(\Bbbk\)-linear map satisfying \(d_M^2=0\), as well as \(d_M(\alpha\cdot m)= d(\alpha)\cdot m + (-1)^{\lvert \alpha\lvert } \alpha\cdot d_M(m), \ \text{for all homogeneous } \alpha\in \Omega^\bullet,\ m \in M^\bullet.\) \end{definition}
Right and bimodule versions are defined analogously. A morphism of DG-modules is a degree-zero $\Omega^\bullet$-linear map $f\colon M^\bullet\rightarrow N^\bullet$ commuting with the differential. For \(I\) a differential ideal in \(\Omega^\bullet\), the quotient \(\Omega^\bullet/I\) is again a DGA.

\begin{example}\label{example: universal exterior DGA}
Let \(A\) be a \(\Bbbk\)‑algebra. Its \emph{universal exterior differential graded algebra} \((\Omega^\bullet_{\text{uni}}, \wedge, d_{\mathrm{uni}})\) is defined as 
\[ \Omega^\bullet_{\text{uni}}(A) = \bigoplus_{n\ge0}\Omega^n_{\text{uni}}(A), \qquad \Omega^0=A, \qquad \Omega^n_{\text{uni}}(A) = \bigcap_{i=0}^{n-1} \ker\bigl(\mu_i\colon A^{\otimes(n+1)}\rightarrow A^{\otimes n}\bigr),\] where \(\mu_i\) multiplies the \(i\)th and \((i+1)\)th factors in the tensor product. On elementary tensors,
\[ (a_0\otimes\ldots\otimes a_p)\wedge(b_0\otimes\ldots\otimes b_q) = a_0\otimes\ldots\otimes (a_p b_0)\otimes\ldots\otimes b_q, \]
\[d_{\text{uni}}\bigl(a_0\otimes\ldots\otimes a_p\bigr)
=\sum_{i=0}^{p+1}(-1)^i a_0\otimes\ldots\otimes a_{i-1}\otimes 1_A\otimes a_i\otimes\ldots\otimes a_p, \]
and \(d_{\text{uni}}\) is extended by the graded Leibniz rule. This is the largest exterior DGA on \(A\), in the sense that every other exterior DGA on \(A\) is a quotient of \(\Omega^\bullet_{\text{uni}}(A)\) by a differential graded ideal. See \cite[Chapter 1]{Loday1998CyclicHomology} for details. We also recall the following lemma, and use the two descriptions of \(\Omega^\bullet_{\text{uni}}\) interchangeably in later paragraphs (see for instance \cite{BeggsMajid2020QRG} for more details).
\end{example}

\begin{lemma}\label{lemma: isomorphism of universal differential graded algebra}
For a finite-dimensional \(\Bbbk\)-algebra \(A\), \(n \geq 1,\) \(\Omega^{n}_{\mathrm{uni}}(A)\cong (\Omega^1_{\mathrm{uni}}(A))^{\otimes_An}\).
\end{lemma}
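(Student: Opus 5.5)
We want \(\Omega^n_{\mathrm{uni}}(A)\cong(\Omega^1_{\mathrm{uni}}(A))^{\otimes_A n}\), and the plan is to construct the comparison map explicitly and prove it bijective by a splitting argument. By Lemma \ref{lemma: universal calculus}, \(\Omega^1_{\mathrm{uni}}(A)=\ker(\mu)\) is an \(A\)\nobreakdash–\(A\)-sub-bimodule of \(A\otimes_\Bbbk A\). The iterated wedge product of Example \ref{example: universal exterior DGA} gives an \(A\)-balanced multilinear map
\[
(\Omega^1_{\mathrm{uni}})^{\times n}\to A^{\otimes(n+1)},\qquad (a_0\otimes a_1)\otimes(b_0\otimes b_1)\otimes\cdots \mapsto a_0\otimes a_1b_0\otimes\cdots .
\]
We first check that its image lies in \(\bigcap_i\ker\mu_i\): applying \(\mu_i\) contracts a factor which, viewed in the appropriate slot, is \(\mu\) applied to one of the \(\ker\mu\) entries, so it vanishes. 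This produces a bimodule map
\[
\Phi\colon(\Omega^1_{\mathrm{uni}})^{\otimes_A n}\to\Omega^n_{\mathrm{uni}}(A).
\]

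To show \(\Phi\) is bijective, we use the standard splitting. The map
\[
A\otimes_\Bbbk A\to\Omega^1_{\mathrm{uni}},\qquad a\otimes b\mapsto a\otimes b-ab\otimes 1,
\]
is a left \(A\)-linear retraction. Hence \(\Omega^1_{\mathrm{uni}}\cong A\otimes_\Bbbk \bar A\) as a left \(A\)-module, where \(\bar A=A/\Bbbk 1\), via \(a\otimes\bar b\mapsto a\,d_{\mathrm{uni}}b\). Since \(A\otimes_\Bbbk\bar A\) is free as a left module, and \(\Omega^1_{\mathrm{uni}}\) is even a bimodule with \(\Omega^1_{\mathrm{uni}}\otimes_A M\cong A\otimes_\Bbbk\bar A\otimes_\Bbbk M\) for a left module \(M\) (using the right action through \(d(b)c=d(bc)-b\,dc\)), induction on \(n\) gives
\[
(\Omega^1_{\mathrm{uni}})^{\otimes_A n}\cong A\otimes_\Bbbk\bar A^{\otimes n},
\qquad a_0\otimes\bar a_1\otimes\cdots\otimes\bar a_n\mapsto a_0\,da_1\otimes_A\cdots\otimes_A da_n .
\]

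The main obstacle is the matching statement for \(\Omega^n_{\mathrm{uni}}(A)\), namely \(\bigcap_i\ker\mu_i\cong A\otimes\bar A^{\otimes n}\) via \(a_0\otimes\bar a_1\otimes\cdots\otimes\bar a_n\mapsto a_0\,da_1\cdots da_n\). I would handle it by choosing a vector-space complement \(A=\Bbbk1\oplus\bar A\), which is possible since \(\Bbbk\) is a field. Then:
\begin{enumerate}[nosep]
\item Expand \(A^{\otimes(n+1)}\) into summands indexed by which of the tensor factors in positions \(1,\dots,n\) are \(1\) or lie in \(\bar A\).
\item Show that the family \(\mu_0,\dots,\mu_{n-1}\) restricts to an isomorphism from the complement of \(A\otimes\bar A^{\otimes n}\) onto a suitable target. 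Equivalently, \(\bigcap_i\ker\mu_i\) is the graph of a linear map from \(A\otimes\bar A^{\otimes n}\).
\end{enumerate}
This is cleanest via the normalized bar resolution: \(\bigcap_i\ker\mu_i\) is identified with the degree-\(n\) part of \(\Omega^\bullet_{\mathrm{uni}}\) as described in \cite[Ch.~1]{Loday1998CyclicHomology}, where \(\Omega^n_{\mathrm{uni}}\cong A\otimes\bar A^{\otimes n}\) is proven. I would cite that result for this step, and then observe that \(\Phi\) intertwines the two identifications.

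Once both sides are identified with \(A\otimes\bar A^{\otimes n}\), note that
\[
\Phi(a_0\,da_1\otimes_A\cdots\otimes_A da_n)=a_0\,da_1\wedge\cdots\wedge da_n .
\]
So \(\Phi\) corresponds to the identity of \(A\otimes\bar A^{\otimes n}\) and is an isomorphism. Finite-dimensionality of \(A\) is not really needed; it only guarantees that all spaces are finite-dimensional, which would allow a dimension-count alternative to the last step: after proving surjectivity of \(\Phi\) from exteriority, compare the dimensions \(\dim A\cdot(\dim A-1)^n\) on both sides.
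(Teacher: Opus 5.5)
The paper gives no proof of this lemma; it is recalled as standard, with a pointer to Beggs--Majid. Your proposal is more explicit than that, and its first step is correct: the concatenation product is \(A\)-balanced and lands in \(\bigcap_i\ker\mu_i\), so \(\Phi\) is well defined. As a proof, however, it has one local error and one genuine gap.

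\textbf{The local error.} The identity \(\Omega^1_{\mathrm{uni}}\otimes_A M\cong A\otimes_\Bbbk\bar A\otimes_\Bbbk M\) is false. For \(M=A\) the left side has dimension \(d(d-1)\) and the right side \(d^2(d-1)\), where \(d=\dim A\). Left freeness \(\Omega^1_{\mathrm{uni}}\cong A\otimes_\Bbbk\bar A\) controls tensoring on the other side: \(X\otimes_A\Omega^1_{\mathrm{uni}}\cong X\otimes_\Bbbk\bar A\) via \(x\otimes\bar b\mapsto x\otimes_A d_{\mathrm{uni}}b\). Inducting on the last factor with this still yields your displayed isomorphism \((\Omega^1_{\mathrm{uni}})^{\otimes_A n}\cong A\otimes_\Bbbk\bar A^{\otimes n}\), so that conclusion survives.

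\textbf{The gap.} It sits exactly where the content of the lemma lies: showing that every element of \(\bigcap_i\ker\mu_i\) is a sum of products \(a_0\,da_1\cdots da_n\), and that \(\Phi\) is injective. Steps 1--2 only announce this. Standard treatments model \(\Omega^n_{\mathrm{uni}}\) as \(A\otimes\bar A^{\otimes n}\) or as \((\Omega^1_{\mathrm{uni}})^{\otimes_A n}\), and the comparison with the subspace \(\bigcap_i\ker\mu_i\subseteq A^{\otimes(n+1)}\) is precisely what must be shown, so quoting them here is close to circular. Your fallback is circular outright. Exteriority of \(\Omega^\bullet_{\mathrm{uni}}(A)\) means surjectivity of the iterated wedge, i.e.\ of \(\Phi\). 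The dimension \(d(d-1)^n\) of \(\bigcap_i\ker\mu_i\) is asserted, never computed.

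\textbf{Surjectivity.} The missing step follows from the retraction you already wrote down. Put \(K_n:=\bigcap_{i=0}^{n-1}\ker\mu_i\subseteq A^{\otimes(n+1)}\), so \(K_1=\Omega^1_{\mathrm{uni}}(A)\). For \(i\le n-2\), \(\mu_i\) only touches the first \(n\) factors, so over a field \(K_n=(K_{n-1}\otimes_\Bbbk A)\cap\ker\mu_{n-1}\). Write \(z\in K_n\) as \(z=\sum_j k_j\otimes b_j\) with \(k_j\in K_{n-1}\). The relation \(\mu_{n-1}z=0\) gives
\[ z=z-\mu_{n-1}(z)\otimes 1=\sum_j\bigl(k_j\otimes b_j-\mu_{n-1}(k_j\otimes b_j)\otimes 1\bigr)=\sum_j k_j\wedge d_{\mathrm{uni}}b_j . \]
By induction on \(n\), every element of \(K_n\) lies in the image of \(\Phi\).

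\textbf{Injectivity.} Your leading-term idea works once made precise. Fix a complement \(A=\Bbbk1\oplus\bar A\) and expand \(a_0\,da_1\cdots da_n\) with \(a_1,\ldots,a_n\in\bar A\). Every term other than \(a_0\otimes a_1\otimes\cdots\otimes a_n\) has a factor exactly equal to \(1\) in some position \(1\le i\le n\): take \(i\) maximal such that the summand \(-a_i\otimes 1\) of \(da_i\) was chosen. Hence projecting \(A^{\otimes(n+1)}\) onto \(A\otimes\bar A^{\otimes n}\) is a left inverse of \(\Phi\) composed with your isomorphism \(A\otimes\bar A^{\otimes n}\cong(\Omega^1_{\mathrm{uni}})^{\otimes_A n}\).

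With these two arguments your plan becomes a complete, citation-free proof. As you observe, it never uses finite-dimensionality of \(A\).
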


Any first‑order calculus \(\Omega^1\cong\Omega^1_{\rm uni}(A)/N\) on \(A\) has a  \emph{maximal prolongation} to an exterior DGA \(\Omega^\bullet_{\max}\)  with \(\Omega^1_{\max}\cong\Omega^1\). This prolongation is universal in  the sense that any exterior DGA \((\widetilde{\Omega}^\bullet,d')\) extending \((\Omega^1,d)\) admits a unique surjective DGA map  \( \Omega^\bullet_{\max} \twoheadrightarrow \widetilde{\Omega}^\bullet \). Concretely, \[ \Omega^\bullet_{\max} \cong \Omega^\bullet_{\rm uni}(A)/I, \] where \(I\) is the differential ideal in \(\Omega^\bullet_{\rm uni}(A)\) generated by \(N\subset\Omega^1_{\rm uni}(A)\) 
and \(d(N)\subset\Omega^2_{\rm uni}(A)\).

\begin{example}\label{example: maximal prolongation of a graph calculus}
Let \(V\) be a finite set and let \(A=\Bbbk(V)\). Under the anti-equivalence of Example \ref{example: graph differential calculus}, the universal FODC \(\Omega^1_{\mathrm{uni}}(A)\) corresponds to the complete directed graph on \(V\), with one arrow between every
ordered pair of distinct vertices and with the formal self-loops required in \(\digraphs\). By Lemma \ref{lemma: isomorphism of universal differential graded algebra}, the space \(\Omega^n_{\mathrm{uni}}(A) \cong \bigl(\Omega^1_{\mathrm{uni}}(A)\bigr)^{\otimes_A n} \) is spanned by composable directed paths of length \(n\) in this graph.

More generally, let \(G=(V,E)\) be a finite directed graph with associated graph calculus \( \Omega^1_G \cong \Omega^1_{\mathrm{uni}}(A)/N,\) where \(N\) is spanned by the basis forms corresponding to the arrows not contained in \(G\). Quotienting only by the graded ideal generated by \(N\) gives \[ T_A(\Omega^1_G) \cong \Omega^\bullet_{\mathrm{uni}}(A)/\langle N\rangle \cong A\oplus\Omega^1_G \oplus(\Omega^1_G)^{\otimes_A 2}  \oplus\ldots .\] Its degree-\(n\) basis elements are the directed paths of length \(n\) in \(G\). To obtain a differential graded algebra, one must additionally impose the relations generated by \(d_{\mathrm{uni}}(N)\). Thus, if \(I = \bigl\langle N,d_{\mathrm{uni}}(N) \bigr \rangle \subseteq \Omega^\bullet_{\mathrm{uni}}(A),\) the maximal prolongation of \(\Omega^1_G\) is \( \Omega^\bullet_{G,\max} = \Omega^\bullet_{\mathrm{uni}}(A)/I. \) In particular, for every \(n\geq1\), \( \Omega^n_{G,\max} \cong \Omega^n_{\mathrm{uni}}(A)/ \bigl(I \cap \Omega^n_{\mathrm{uni}}(A)\bigr). \) 
\end{example}

\section{Non-singular complexes and differential graded algebras}\label{section: from semisimplicial sets to differential graded algebras}
Let \(S\in\ssreg_{\leq n}\) with \(\widehat S:=UL(S)\). In this section, we associate to \(\widehat S\) the normalized cochain DGA \( \Omega^\bullet (\widehat S) := N^\bullet(L(S);\Bbbk).\) Since normalization removes the degenerate simplices adjoined by \(L\), \(\Omega_S^p\) has a basis indexed by the simplices \(S_p\). Hence, we will denote this DGA by \(\Omega_S^\bullet.\) 
These assignments define a contravariant functor
\(\Omega^\bullet\colon \bigl(\ssregh_{\leq n}\bigr)^{\mathrm{op}} \rightarrow \mathsf{DG}_n.\)
We proceed to prove that this functor is fully faithful and we characterize its essential image proving the first main result of our paper. We start with a lemma.

\begin{lemma}\label{lemma: simplices inject into clique complex}
Let \(S\in\ssreg_{\leq n}\). For every
\(1\leq p\leq n\), the image of the vertex map lies in the \((p+1)\)-clique complex
of the underlying directed graph, i.e.
\[ \operatorname{vert}_p(S_p) \subseteq \left\{ (v_0, \ldots, v_p) \in (S_0)^{p+1} \ \middle|\ (v_i,v_j)\in S_1\text{ for all }0\leq i<j\leq p \right \}. \]
\end{lemma}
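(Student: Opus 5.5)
The plan is to show directly that every pair of vertices of a simplex spans an edge of \(S\), by exhibiting that edge as an iterated face of the simplex. Fix \(1\leq p\leq n\), a simplex \(\sigma\in S_p\), and indices \(0\leq i<j\leq p\). Let \(\beta_{ij}\colon[1]\hookrightarrow[p]\) be the unique order-preserving injection with image \(\{i,j\}\); it is a morphism of \(\Delta^+_n\) because \(i<j\). Set \(e:=S(\beta_{ij})(\sigma)\in S_1\). In the paper's language of the face poset, \(e\) is obtained from \(\sigma\) by a sequence of face maps, since \(\beta_{ij}\) factors as a composite of coface maps.

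The key computation identifies the vertices of \(e\). Let \(\alpha_0,\alpha_1\colon[0]\hookrightarrow[1]\) be the vertex inclusions. Then \(\beta_{ij}\circ\alpha_0=\alpha_i\) and \(\beta_{ij}\circ\alpha_1=\alpha_j\) as maps \([0]\hookrightarrow[p]\), since both sides are order-preserving injections with the same image. Because \(S\) is contravariant, functoriality gives \(v_0(e)=S(\alpha_0)S(\beta_{ij})(\sigma)=S(\beta_{ij}\circ\alpha_0)(\sigma)=v_i(\sigma)\), and likewise \(v_1(e)=v_j(\sigma)\). Hence \(\operatorname{vert}_1(e)=(v_i(\sigma),v_j(\sigma))\), so this ordered pair lies in the image of \(\operatorname{vert}_1\). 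Under the paper's convention of identifying \(S_1\) with that image, this says exactly that \((v_i,v_j)\in S_1\).

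Applying this to every pair \(i<j\) places \(\operatorname{vert}_p(\sigma)\) in the stated clique set, which proves the lemma. Vertex-injectivity is used only to make the identification of \(S_1\) with pairs meaningful, i.e.\ so that \(e\) is the unique edge \([v_i,v_j]\).

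No step is a real obstacle. The one point needing care is the contravariance bookkeeping, namely that \(S(\beta\circ\alpha)=S(\alpha)\circ S(\beta)\), which is what makes the order of vertices come out as \((v_i,v_j)\) rather than reversed. One might also note, though it is not needed, that condition (3) of Definition~\ref{definition:non-singular-complexes} gives \(v_i\neq v_j\), so these edges are non-loops.
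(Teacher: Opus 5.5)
Your proof is correct and follows essentially the same argument as the paper: you take the face \(S(\beta_{ij})(\sigma)\in S_1\) along the injection \(\beta_{ij}\colon[1]\hookrightarrow[p]\) with image \(\{i,j\}\), and read off its ordered vertex tuple \((v_i,v_j)\). Your explicit check that \(\beta_{ij}\circ\alpha_0=\alpha_i\) and \(\beta_{ij}\circ\alpha_1=\alpha_j\), together with the contravariance bookkeeping, just spells out the step the paper states without detail.
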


\begin{proof}
Let \(\sigma\in S_p\), with \(\operatorname{vert}_p(\sigma)=(v_0,\ldots,v_p)\), and fix
\(0\leq i<j\leq p\). Let \(\beta_{i,j}\colon[1]\hookrightarrow[p]\) be the order-preserving
injection with \(\beta_{i,j}(0)=i\) and \(\beta_{i,j}(1)=j\). Then \(S(\beta_{i,j})(\sigma)\in S_1\) has ordered vertex tuple \((v_i,v_j)\). Hence \([v_i,v_j]\in S_1\).
\end{proof}

\begin{excerpt}\label{excerpt: Object part of functor D}
Let \(S\in\ssreg_{\leq n}\). Generalizing the constructions of Example \ref{example: graph differential calculus}, we define \( \Omega^\bullet_{S}\) by \[ \Omega^0_{S} \coloneq A_{S} := \Bbbk(S_0), \qquad  \Omega_{S}^p \coloneq \bigoplus_{ \sigma \in S_p} \Bbbk \rho_\sigma, \quad 1 \leq p \leq n, \qquad \Omega^q_{S} = 0 , \quad q > n.\] For \(v\in S_0\), we set \(\rho_{[v]}:=\delta_v\), and for an edge \(e\in S_1\), we also write \(\omega_e:=\rho_e\). If \(\sigma=[v_0,\ldots,v_p]\), then the \(A_{S}\)-bimodule structure is given by \[ f\cdot \rho_\sigma\cdot g = f(v_0)\rho_\sigma g(v_p), \quad f,g\in A_{S}. \] For simplices \(\sigma= [v_0 ,\ldots ,v_p]\in S_p \), \( \tau = [w_0 ,\ldots ,w_q]\in S_q \), set \[ \rho_\sigma\wedge\rho_\tau \coloneq 
\begin{cases}
\rho_{\sigma \star \tau}, 
& \text{if }v_p=w_0\text{ and } \sigma \star \tau \coloneq [v_0, \ldots,v_{p-1}, v_p=w_0,w_1,\ldots ,w_q]\in S_{p+q} ,\\
0, & \text{otherwise},
\end{cases} \]
and extend to a \(\Bbbk\)-linear map \(\Omega_{S}^p\otimes \Omega_{S}^q\rightarrow\Omega_{S}^{p+q}\). The differential \(d\colon  \Omega_{S}^p  \rightarrow \Omega_{S}^{p+1}\) is defined on basis elements by
\[ d(\rho_\sigma) = \sum_{j=0}^{p+1}(-1)^j \sum_{\substack{\tau\in S_{p+1} \\d_j\tau=\sigma}} \rho_\tau, \]
and extended \(\Bbbk\)‑linearly. In degree \(n\), the differential is zero.
\end{excerpt}
 
For $X:\Delta^{op}\rightarrow \Sets$ a simplicial set, the cochains \( C^p(X;\Bbbk) \coloneq \mathrm{Hom}_{\Bbbk}\bigl(\Bbbk[X_p],\Bbbk\bigr) \) form a cosimplicial vector space, and the associated normalized cochain complex carries the cup product, and hence defines a differential graded algebra, denoted by \( N^\bullet(X;\Bbbk).\) For \(S \in \ssreg_{\leq n},\) the nondegenerate simplices of \(L(S)\) are precisely the simplices of \(S\). Hence, the construction above gives an identification \( \Omega^\bullet_{S}\cong N^\bullet(L(S);\Bbbk).\) Under this identification, the normalized cochain differential and
the cup product agree respectively with the differential and product defined above on \(\Omega_S^\bullet\). As a consequence, we have the following.

\begin{proposition}\label{proposition: normalized cochains concrete dga}
The DGA \(\Omega^\bullet_{S}\) is isomorphic to \( N^\bullet(L(S);\Bbbk) \) with the cup product. In addition, \(\widehat{S} \to \Omega^\bullet_{S}\) defines a contravariant functor \( \Omega^\bullet\colon \bigl( \ssregh_{\leq n}\bigr)^{\mathrm{op}} \to \mathsf{DG}_{\leq n}.\)
\end{proposition}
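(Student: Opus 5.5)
The proof has two parts: first the isomorphism \(\Omega^\bullet_S\cong N^\bullet(L(S);\Bbbk)\), then functoriality on \(\ssregh_{\leq n}\).

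\textbf{The isomorphism.} By definition \(L(S)_p=\coprod_{\pi\colon[p]\twoheadrightarrow[q]}S_q\), and a simplex \((\pi,\tau)\) is nondegenerate exactly when \(\pi=\mathrm{id}\). So the nondegenerate \(p\)-simplices are exactly \(S_p\). The normalized cochains \(N^p(L(S);\Bbbk)\) are the cochains vanishing on degenerate simplices, so they have the dual basis \(\{\chi_\sigma\}_{\sigma\in S_p}\). I would set \(\rho_\sigma\mapsto\chi_\sigma\) and check the three structures.

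\begin{itemize}[nosep]
\item \emph{Differential.} We have \((\delta\chi_\sigma)(\tau)=\sum_j(-1)^j\chi_\sigma(d_j\tau)\). Since \(\delta\chi_\sigma\) is again normalized, it suffices to evaluate on \(\tau\in S_{p+1}\). This gives \(\sum_j(-1)^j\sum_{d_j\tau=\sigma}\rho_\tau\), which is the differential of \(\Omega^\bullet_S\).
\item \emph{Cup product.} We have \((\chi_\sigma\smile\chi_\tau)(\mu)=\chi_\sigma(\text{front }p\text{-face of }\mu)\,\chi_\tau(\text{back }q\text{-face of }\mu)\) for \(\mu\in S_{p+q}\). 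By vertex-injectivity, this is nonzero iff \(\mu\) has vertex word \((v_0,\dots,v_p=w_0,\dots,w_q)\), i.e.\ iff \(\mu=\sigma\star\tau\).
\item \emph{Degree zero.} In degree zero the cup product is pointwise multiplication, giving the bimodule structure \(f(v_0)\rho_\sigma g(v_p)\).
\end{itemize}

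Associativity, unitality, \(d^2=0\) and the Leibniz rule then transfer from the standard facts about normalized cochains with cup product. The only point needing care is that normalized cochains form a sub-DGA, which is classical.

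\textbf{Functoriality.} Take a semisimplicial map \(\widehat g\colon\widehat S\to\widehat T\). By Remark~\ref{remark:unique-lift-simplicialmap}, it is the underlying map of a unique simplicial map \(g\colon L(S)\to L(T)\). Pullback of cochains \(g^*\) commutes with coboundary and cup product, since both are defined through face maps and front/back faces. It preserves normalized cochains because \(g\) commutes with degeneracies, so degenerate simplices go to degenerate ones.

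This yields a DGA map \(\Omega^\bullet_T\to\Omega^\bullet_S\). Uniqueness of the lift gives \((g'g)^*=g^*g'^*\) and \(\mathrm{id}^*=\mathrm{id}\). All these DGAs vanish above degree \(n\), so the functor lands in \(\mathsf{DG}_n\).

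\textbf{Main obstacle.} The one step needing attention is the normalization argument in the functoriality part. Because \(\widehat g\) may collapse simplices, \(g^*\chi_\sigma\) can be supported on simplices that map to degenerate ones. We need to confirm that nondegenerate simplices can map to degenerate ones without breaking anything: those simplices simply receive the value \(0\), since \(\chi_\sigma\) vanishes on degenerate simplices. Hence the formula is
\[
g^*\rho_\sigma=\sum_{\mu\in S_p,\ g(\mu)=\sigma}\rho_\mu ,
\]
and the collapsing is handled automatically. Everything else is routine.
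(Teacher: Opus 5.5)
Your proposal is correct and follows the same route as the paper. You identify the basis of \(N^p(L(S);\Bbbk)\) with \(S_p\) via the nondegenerate simplices, match the coboundary and cup product with \(d\) and \(\wedge\), and define \(\Omega^\bullet(\widehat g)\) as the pullback \(g^\ast\) along the unique simplicial lift from Remark~\ref{remark:unique-lift-simplicialmap}. Your version spells out the checks the paper only asserts: the front/back-face computation via vertex-injectivity, preservation of normalized cochains, and the explicit pullback formula.
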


\begin{proof} We only describe the functor $\Omega^\bullet$. Let \( \widehat{g}\colon \widehat{S}\rightarrow \widehat{T} \) be a morphism in \(\ssregh_{\leq n}\). Equivalently, by Remark \ref{remark:unique-lift-simplicialmap}, \( \widehat{g} \) is the underlying semisimplicial map of a simplicial map \( g \colon L(S) \rightarrow L(T).\) We define \( \Omega^\bullet(\widehat g) \coloneq g^\ast\colon N^\bullet(L(T);\Bbbk) \rightarrow N^\bullet(L(S); \Bbbk). \) Under the above identifications, this gives a DGA morphism \( \Omega^\bullet(\widehat g) = g^\ast \colon \Omega_T^\bullet \rightarrow \Omega_S^\bullet.\) This assignment is functorial and contravariant. 
\end{proof}

\begin{excerpt}
For a morphism \( \widehat g\colon \widehat{S}\rightarrow \widehat{T} \), we explicitly describe \(\Omega^\bullet(\widehat{g})=g^*\). In degree zero, \( g^\ast( \delta_t) = \sum_{s\in S_0, \ g_0(s) = t}\delta_s, \ t\in T_0.\) For \( e=[a,b]\in T_1\), \[ g^\ast(\omega_{[a,b]}) = \sum_{\substack{[x,y]\in S_1\\ g_0(x)=a,\ g_0(y)=b}} \omega_{[x,y]},\] and for \(\tau = [w_0,\ldots,w_p]\in T_p, \) \[ g^\ast(\rho_\tau) = \sum_{\substack{\sigma=[v_0,\ldots,v_p]\in S_p\\
g_0(v_i)=w_i,\ \forall i}} \rho_\sigma.\] Since \(g^\ast\bigl(\sum_{t\in T_0}\delta_t\bigr) = \sum_{s\in S_0}\delta_s\), this morphism is unital.
\end{excerpt}

Before establishing fully faithfulness, we relate \(\Omega_S^\bullet\) to two other algebraic objects, namely the universal differential algebra of \(A_S\) and the incidence algebra of the face poset \(P_S\).

\begin{remark}\label{remark: relation-with-max-prol}
Let \(V\) be a finite set with \( A_V :=\Bbbk(V) \). Define the complete non-singular \( n \)-complex \( S^{\max}_{V ,\leq n}\) by \[ \bigl( S^{\max}_{V, \leq n}\bigr)_p := \left\{ (v_0, \ldots,v_p)\in V^{p+1} \ \middle|\ v_i \neq v_j \text{ for } i\neq j \right\}, \qquad 0\leq p\leq n, \] with face maps given by deletion of vertices. If \( S \in \mathsf{SS}^{\mathrm{ns}}_{\leq n}\) has \( S_0 = V\), its vertex maps define the embedding \( \iota_S: S \hookrightarrow S^{\max}_{V , \leq n} \). After passing to free degeneracy  completions, \( \Omega^\bullet \) gives a surjective DGA morphism \( \iota_S^*: \Omega^\bullet_{S^{\max}_{V,\leq n}} \twoheadrightarrow \Omega^\bullet_S\) with kernel \[ J_S = \bigoplus_{p=1}^n \operatorname{span}_{\Bbbk} \left
\{ \rho_{ [v] } \ \middle| \ v\in \bigl(S^{\max}_{V,\leq n}\bigr)_p \setminus\operatorname{vert}_p(S_p) \right
\}, \] and hence \(\Omega^\bullet_S \cong \Omega^\bullet_{S^{\max}_{V,\leq n}}/J_S.\) Thus \(S\) is obtained from the complete non-singular complex by deleting simplices, while \(\Omega^\bullet_S\) is obtained by quotienting out the corresponding basis forms.

To compare this with the universal differential graded algebra, consider \[ P_p(V) = \left\{ (v_0, \ldots, v_p) \in V^{p+1} \ \middle|\ v_{i-1} \neq v_i \right\},\] and put \( m(v) := \omega_{[v_0,v_1]}\wedge\ldots\wedge
\omega_{[v_{p-1},v_p]}. \) The elements \(m(v)\), with \(v\in P_p(V)\), form a basis of \(\Omega^p_{\mathrm{uni}}(A_V)\). The canonical surjection \( \pi_S: \Omega^\bullet_{\mathrm{uni}}(A_V) \twoheadrightarrow \Omega^\bullet_S \) is given by \[ \pi_S(m(v)) = \begin{cases} \rho_\sigma, & \operatorname{vert}_p(\sigma)=v
\text{ for some }\sigma\in S_p,\\ 
0, & \text{otherwise}.\end{cases}\]
Consequently, \( \Omega^\bullet_S \cong \Omega^\bullet_{\mathrm{uni}}(A_V)/I_S,\) where \[ I_S = \bigoplus_{p\geq1} \operatorname{span}_{\Bbbk} \left\{ m(v) \ \middle| \ v\in P_p(V),\ v\notin\operatorname{vert}_p(S_p) \right\}, \] where \(S_p=\varnothing\) for \(p>n\).

Finally, if \( N = \ker\bigl( \Omega^1_{\mathrm{uni}}(A_V)\twoheadrightarrow\Omega^1_S
\bigr), \) then \( \langle N,d_{\mathrm{uni}} N \rangle \subseteq I_S\), and the preceding
surjection factors through the maximal prolongation of the calculus on the \(1\)-skeleton:
\[ \Omega^\bullet_{\max}(\Omega^1_S) = \frac{\Omega^\bullet_{\mathrm{uni}}(A_V)}
{\langle N,d_{\mathrm{uni}}N\rangle}\twoheadrightarrow \Omega^\bullet_S. \] Accordingly, deleting edges determines the first-order calculus, whereas the remaining part of the quotient records the genuinely higher-dimensional choice of which directed paths are filled by simplices.
\end{remark}

\begin{remark}\label{remark: relation between AW DGA and the incidence algebra} There is a close relationship between \(\Omega^\bullet_{S}\) and the \emph{incidence algebra \(\Bbbk[P_{S}]\)} associated to the face poset \(P_{S}\) of \(S\in \ssreg_{\leq n}\) (see e.g. \cite[\S 3.6]{Stanley2011EC1} or \cite{Yanagawa2005Dualizing} for more on incidence algebras). The algebra \(\Bbbk[P_{S}]\) is defined as the \(\Bbbk\)-algebra with basis $\{e_{\xi,\tau}\mid \xi\leq\tau\}$, product  \(e_{\xi,\tau}e_{\nu,\zeta}=\delta_{\tau, \nu}e_{\xi,\zeta}\), and unit \(\sum_{\xi\in P_{S}}e_{\xi,\xi}\). 
For each simplex $\sigma$, put \[ T_\sigma \coloneq \sum_{\substack{\tau\  \text{with } \\\sigma \star\tau\ \text{defined}}} e_{\tau, \sigma\star\tau} \in \Bbbk[P_{S}]. \]
The family $\{T_\sigma\}_{\sigma\in P_{S}}$ is linearly independent as the \(T_\sigma\) have disjoint support in the incidence basis. Moreover, \[T_\sigma T_\tau=
\begin{cases}
T_{\tau\star\sigma},&\text{if }\tau\star\sigma\ \text{is defined},\\[2pt]
0,&\text{otherwise}.
\end{cases}\]
For $C \coloneq \mathrm{span}_\Bbbk\{T_\sigma\}\subset \Bbbk[P_{S}]$ as subalgebra of \(\Bbbk[P_{S}]\),
the map
\[ \Psi:\ (\Omega_S^\bullet,\wedge) \rightarrow \Bbbk[P_{S}], \quad \Psi(\rho_\sigma) \coloneq T_\sigma, \] is an injective anti-algebra morphism with image $C$. Equivalently, after forgetting the grading, it induces an algebra isomorphism
\((\Omega_S^\bullet)^{\mathrm{op}} \cong C\subseteq\Bbbk[P_S].\)
\end{remark}

Let \(\mathsf{Im}_n\subseteq\mathsf{DG}_n\) denote the full subcategory spanned by the objects
\(\Omega^\bullet(\widehat S)=\Omega_S^\bullet, \ \widehat S=UL(S)\in\ssregh_{\leq n}, \ S\in\ssreg_{\leq n}.\) 
For a map \(\widehat g\colon\widehat S\rightarrow\widehat T,\) let \(g\colon L(S)\rightarrow L(T) \) denote its unique simplicial lift as in Remark \ref{remark:unique-lift-simplicialmap}. We now prove that the assignment \(\widehat g\to\Omega^\bullet(\widehat g)=g^\ast\) induces natural bijections
\[ \mathrm{Hom}_{\ssregh_{\leq n}} (\widehat S, \widehat T) \cong \mathrm{Hom}_{\mathsf{Im}_n} \bigl(\Omega_T^\bullet,\Omega_S^\bullet\bigr).\] 
To that end, we make use the following extension criterion.
\begin{lemma}\label{lemma: vertex maps into completed cores}
Let \(S,T\in\ssreg_{\leq n}\), with
\(\widehat S=UL(S)\) and \(\widehat T=UL(T)\).
A map \(g_0\colon S_0\to T_0\) extends uniquely to a morphism \(\widehat g\colon\widehat S\rightarrow\widehat T\) if and only if, for every \(\sigma=[v_0,\ldots,v_p]\in S_p\), the compression of \([g_0(v_0),\ldots,g_0(v_p)]\) is a simplex of \(T\). In that case,
\(\widehat g([v_0,\ldots,v_p]) =[g_0(v_0),\ldots,g_0(v_p)]. \)
\end{lemma}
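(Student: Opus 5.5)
We will reduce the statement to the adjunction bijection of Remark \ref{remark:unique-lift-simplicialmap}, namely \(\operatorname{Hom}_{\mathsf{ssSet}}(\widehat S,\widehat T)\cong\operatorname{Hom}_{\mathsf{ssSet}}(S,UL(T))\), and then describe simplices of \(\widehat T=UL(T)\) concretely. By the explicit formula \(L(T)_p=\coprod_{\pi\colon[p]\twoheadrightarrow[q]}T_q\), a \(p\)-simplex of \(\widehat T\) is a pair \((\pi,\tau)\) with \(\tau\in T_q\) nondegenerate. Its vertex word is \((w_{\pi(0)},\ldots,w_{\pi(p)})\), where \(\tau=[w_0,\ldots,w_q]\). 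Since \(T\) is vertex-injective and positive-dimensional simplices of \(T\) have distinct vertices, this word has repetitions only in consecutive blocks. Its \emph{compression}, obtained by collapsing each block to one letter, is exactly \(\operatorname{vert}_q(\tau)\), and the block structure recovers \(\pi\). Hence simplices of \(\widehat T\) correspond bijectively to words in \(T_0\) whose compression lies in \(\operatorname{vert}(T)\). The same holds for \(\widehat S\). In particular, every semisimplicial map into \(\widehat T\) is determined by its effect on vertices, because it commutes with the vertex maps \(v_i\).

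\textbf{Uniqueness.} Let \(\widehat g\) be any extension of \(g_0\). For \(\sigma\in\widehat S_p\) and each \(i\), we have \(v_i(\widehat g(\sigma))=g_0(v_i(\sigma))\). By the injectivity of vertex words on \(\widehat T\) just established, this forces \(\widehat g(\sigma)\) to be the simplex with word \((g_0(v_0),\ldots,g_0(v_p))\). This gives both the uniqueness and the formula in the statement.

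\textbf{Necessity.} If \(\widehat g\) exists and \(\sigma=[v_0,\ldots,v_p]\in S_p\), then \(\widehat g(\sigma)\in\widehat T_p\) has word \((g_0(v_i))_i\). By the description above, the compression of this word is a simplex of \(T\).

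\textbf{Sufficiency.} Assume the compression condition holds for every \(\sigma\in S_p\). By the adjunction, it suffices to build a semisimplicial map \(h\colon S\to UL(T)\). Set \(h(\sigma)\) to be the unique simplex of \(\widehat T\) with word \((g_0(v_0),\ldots,g_0(v_p))\). This simplex exists because the word has repetitions only in consecutive blocks: any repeated value \(g_0(v_i)=g_0(v_j)\) with \(i<j\) forces the intermediate images to be equal as well, since otherwise the compression would contain a repeated vertex, contradicting condition (3) of Definition \ref{definition:non-singular-complexes} for simplices of \(T\).

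Compatibility with face maps follows from two facts. A face map \(d_j\) on \(\widehat T\) deletes the \(j\)-th letter of the word, which we check from the formula for \(L\) by composing \(\delta_j\) with \(\pi\) and taking the epi–mono factorization. On \(S\), \(d_j\) likewise deletes the \(j\)-th vertex. Since both sides are then determined by their words, the two composites agree. Taking the adjoint of \(h\) gives a simplicial \(L(S)\to L(T)\), and hence \(\widehat g\), extending \(g_0\). By the uniqueness step, \(\widehat g\) is given by the stated formula on all of \(\widehat S\), degenerate simplices included.

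\textbf{Main obstacle.} The delicate step is the claim that simplices of \(\widehat T\) are uniquely determined by their vertex words and that \(d_j\) acts by letter deletion. This requires a careful check of the face maps of \(L(T)\) via epi–mono factorization in \(\Delta\). In particular, when deleting a letter removes an entire block, the face lands in \(T\) through \(d\) of \(\tau\); when the block survives, \(\tau\) is unchanged and only \(\pi\) changes.
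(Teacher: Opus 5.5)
Your proof is correct and is essentially the paper's argument: simplices of $\widehat T$ are determined by their vertex words, a word occurs exactly when its compression is a simplex of $T$, face maps delete letters, and uniqueness, necessity and the formula follow from word-determination. The one variation is that for existence you define the map on $S$ only and extend along $L\dashv U$, which handles the degenerate simplices of $\widehat S$ automatically (the paper defines the map directly on all of $\widehat S$ and leaves that check implicit). Also, the step you flag as the main obstacle is milder than you fear: $\delta_j\circ\alpha_i=\alpha_{\delta_j(i)}$ makes face maps delete letters of vertex words in any semisimplicial set, so only the formula $v_i(\pi,\tau)=w_{\pi(i)}$ needs the explicit structure of $L(T)$.
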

\begin{proof}
A simplex of \(\widehat T=UL(T)\) is uniquely determined by its vertex word, and such a word represents a simplex of \(\widehat T\) when its compression is a simplex of \(T\). Hence the above formula defines \(\widehat g\) exactly under the stated condition. It commutes with the face maps because these delete entries of vertex words, and uniqueness follows because simplices of \(\widehat T\) are determined by their vertex words.
\end{proof}

\begin{proposition}\label{proposition: omega functor faithful}
The functor \( \Omega^\bullet \colon \bigl(\ssregh_{\leq n}\bigr)^{\mathrm{op}} \to \mathsf{Im}_n\)
is faithful.
\end{proposition}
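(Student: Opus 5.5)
The plan is to show that a morphism \(\widehat g\colon \widehat S\to\widehat T\) can be read off from its degree-zero component \(\Omega^0(\widehat g)=g^\ast|_{A_T}\colon A_T\to A_S\). Then two morphisms with the same image under \(\Omega^\bullet\) must coincide, by the uniqueness part of Lemma \ref{lemma: vertex maps into completed cores}.

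Let \(\widehat g,\widehat h\colon\widehat S\to\widehat T\) satisfy \(g^\ast=h^\ast\colon\Omega^\bullet_T\to\Omega^\bullet_S\), where \(g,h\) are the unique simplicial lifts from Remark \ref{remark:unique-lift-simplicialmap}. In degree zero, the explicit description gives
\[ g^\ast(\delta_t)=\sum_{s\in S_0,\ g_0(s)=t}\delta_s,\qquad t\in T_0 .\]
Since \(\{\delta_s\}_{s\in S_0}\) is a basis of \(A_S\), the element \(g^\ast(\delta_t)\) determines the fibre \(g_0^{-1}(t)\) for every \(t\). This holds even when the fibre is empty, in which case \(g^\ast(\delta_t)=0\). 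The fibres \(\{g_0^{-1}(t)\}_{t\in T_0}\) partition \(S_0\), so for each \(s\in S_0\) the value \(g_0(s)\) is the unique \(t\) with \(\delta_s\) appearing in \(g^\ast(\delta_t)\). Equivalently, \(\delta_s\cdot g^\ast(\delta_t)\) equals \(\delta_s\) or \(0\) according to whether \(g_0(s)=t\). Hence \(g^\ast|_{\Omega^0_T}=h^\ast|_{\Omega^0_T}\) forces \(g_0=h_0\).

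Next we pass from vertices to all simplices. Every simplex of \(\widehat S\) is a simplex of \(L(S)\), and it is uniquely determined by its vertex word (Remark \ref{remark:unique-lift-simplicialmap}). A semisimplicial map commutes with the vertex maps, so \(\operatorname{vert}(\widehat g(\sigma))=g_0^{\times(p+1)}(\operatorname{vert}(\sigma))\), and the same holds for \(\widehat h\). Since \(g_0=h_0\) and simplices of \(\widehat T\) are determined by their vertex words, \(\widehat g(\sigma)=\widehat h(\sigma)\) for every \(\sigma\). This is exactly the uniqueness statement of Lemma \ref{lemma: vertex maps into completed cores}, which can be cited directly.

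No step is a real obstacle. The only point needing care is that only the degree-zero part of \(g^\ast\) is used, and that this part genuinely detects \(g_0\) even when some fibres are empty. Degenerate simplices of \(\widehat S\) pose no separate problem, because they are handled by vertex-word determinacy in \(\widehat T\) rather than by the normalized forms, which do not see them.
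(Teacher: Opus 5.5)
Your proposal is correct and follows essentially the same route as the paper's proof. Both read off \(g_0=h_0\) from \(g^\ast(\delta_t)=\sum_{g_0(s)=t}\delta_s\) using that the \(\delta_s\) form a basis, and then conclude \(\widehat g=\widehat h\) because simplices of \(\widehat T\) are determined by their vertex words (the uniqueness part of Lemma~\ref{lemma: vertex maps into completed cores}); your remarks on empty fibres and on spelling out the vertex-word argument only make explicit what the paper leaves implicit.
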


\begin{proof} Let \(\widehat g,\widehat h:\widehat S\to \widehat T\) be morphisms such that
\(g^\ast=h^\ast\). It suffices to compare degree \(0\). For \(t\in T_0\), \[ g^\ast(\delta_t) = \sum_{\substack{s\in S_0\\ g_0(s)=t}}\delta_s, \qquad h^\ast(\delta_t) = \sum_{\substack{s\in S_0\\ h_0(s)=t}}\delta_s.\]
Since the \(\delta_s\)'s form a basis, \(g_0^{-1}(t)=h_0^{-1}(t)\) for every
\(t\), hence \(g_0=h_0\). By the preceding lemma, a morphism \(\widehat S\to \widehat T\) is uniquely determined by its degree-zero map. Thus \(\widehat g=\widehat h\).
\end{proof}

\begin{proposition}\label{proposition: omega functor full}
The functor \( \Omega^\bullet\colon
\bigl(\ssregh_{\leq n}\bigr)^{\mathrm{op}}
\to \mathsf{Im}_n \) is full.
\end{proposition}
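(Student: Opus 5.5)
Given a DGA morphism \(\varphi\colon\Omega_T^\bullet\to\Omega_S^\bullet\), we will produce a vertex map \(g_0\colon S_0\to T_0\), check that it satisfies the extension criterion of Lemma \ref{lemma: vertex maps into completed cores}, and show that \(\varphi=g^\ast\). The argument has four steps.

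\emph{Step 1: the vertex map.} In degree zero, \(\varphi_0\colon\Bbbk(T_0)\to\Bbbk(S_0)\) is a unital algebra morphism. It therefore sends the complete set of orthogonal idempotents \(\{\delta_t\}\) to orthogonal idempotents summing to \(1\). Each \(\varphi(\delta_t)\) is an idempotent in \(\Bbbk(S_0)\), hence the indicator function of a subset \(E_t\subseteq S_0\). The \(E_t\) are pairwise disjoint and cover \(S_0\). Define \(g_0(s)=t\) for \(s\in E_t\). Then \(\varphi_0(\delta_t)=\sum_{g_0(s)=t}\delta_s\), which is exactly the degree-zero part of \(g^\ast\).

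\emph{Step 2: edges.} Take an edge \([x,y]\in S_1\) with \(a=g_0(x)\neq g_0(y)=b\). We need \([a,b]\in T_1\). Since \(\varphi\) commutes with \(d\), we have \(\varphi(d\delta_b)=d\varphi(\delta_b)\). Multiply on the left by \(\varphi(\delta_a)\), which is allowed because \(\varphi\) is multiplicative, to get
\[
\varphi(\delta_a\, d\delta_b)=\varphi(\delta_a)\, d\varphi(\delta_b).
\]
By Example \ref{example: graph differential calculus}, the left side is \(\varphi(\omega_{[a,b]})\) if \([a,b]\in T_1\) and \(0\) otherwise. The right side is \(\sum_{g_0(x')=a}\delta_{x'}\, d\bigl(\sum_{g_0(y')=b}\delta_{y'}\bigr)\). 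Its \(\omega_{[x,y]}\)-coefficient is \(1\), because \(x\neq y\) and \(x\) is not in the \(b\)-fibre. The right side is therefore nonzero, so \([a,b]\in T_1\). Summing over fibres also yields \(\varphi(\omega_{[a,b]})=\sum\omega_{[x,y]}\) with the sum over \(g_0\)-preimage edges, and this matches \(g^\ast\).

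\emph{Step 3: higher simplices (the main obstacle).} Let \(\sigma=[v_0,\ldots,v_p]\in S_p\) and let \(w=(w_0,\ldots,w_k)\) be the compression of \((g_0(v_i))_i\). We must show \([w]\in T_k\). Since \(\Omega_S^\bullet\) is exterior, \(\rho_\sigma=\omega_{[v_0,v_1]}\wedge\cdots\wedge\omega_{[v_{p-1},v_p]}\). Consider the corresponding product in \(\Omega_T^\bullet\):
\[
\Pi:=\varphi(\delta_{w_0})\,\varphi(\delta_{w_0}d\delta_{w_1})\cdots .
\]
The difficulty is that consecutive vertices with equal images contribute degree-zero factors rather than edge forms, so the degrees do not match \(p\). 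To handle this, group the vertices of \(\sigma\) into the consecutive blocks collapsed by \(g_0\). We then use the identity
\[
\varphi(\delta_{w_0})\, d\varphi(\delta_{w_1})\cdots d\varphi(\delta_{w_k})=\varphi\bigl(\delta_{w_0}d\delta_{w_1}\cdots d\delta_{w_k}\bigr),
\]
which equals \(\varphi(\rho_{[w]})\) or \(0\). This is a \(k\)-form. We expand its left side in \(\Omega_S^k\) and look for a nonzero coefficient. Take the face \(\sigma'\) of \(\sigma\) obtained by keeping one vertex from each block, for instance the first. Then \(\sigma'\in S_k\) and its image word is exactly \(w\). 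In the expansion, the coefficient of \(\rho_{\sigma'}\) should be \(\pm1\) and possibly more: each factor \(d\varphi(\delta_{w_i})\) contains \(\omega_{[u,u']}\) for every edge \([u,u']\) entering the fibre of \(w_i\) from outside it. Because blocks are consecutive and the \(w_i\) of adjacent blocks differ, the only path monomials through the fibres \(g_0^{-1}(w_0),\ldots,g_0^{-1}(w_k)\) realising a given simplex \([u_0,\ldots,u_k]\) with \(g_0(u_i)=w_i\) are the obvious ones, each with coefficient \(+1\); the incoming-edge sign is positive. So the coefficient of \(\rho_{\sigma'}\) is nonzero. Hence \(\varphi(\rho_{[w]})\neq0\), which forces \([w]\in T_k\). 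The delicate point is this sign and coefficient bookkeeping; it is exactly the computation from Step 2 iterated via the Leibniz rule.

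\emph{Step 4: conclusion.} By Lemma \ref{lemma: vertex maps into completed cores}, \(g_0\) extends to a morphism \(\widehat g\colon\widehat S\to\widehat T\). Both \(\varphi\) and \(g^\ast\) agree on \(\Omega^0_T\) and \(\Omega^1_T\), the latter by Step 2 or simply because \(\omega_{[a,b]}=\delta_a\,d\delta_b\). Both are DGA morphisms, and \(\Omega_T^\bullet\) is generated as an algebra by \(\Omega^0_T\) and \(d\Omega^0_T\) (the exterior property). Therefore \(\varphi=g^\ast=\Omega^\bullet(\widehat g)\), which proves fullness.
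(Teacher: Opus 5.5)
Your proof is correct and follows the paper's argument. The steps match: orthogonal idempotents give \(g_0\); the edge case is handled by the nonvanishing of \(\varphi(\delta_a)\,d\varphi(\delta_b)\); the higher condition comes from the coefficient of \(\rho_{\sigma'}\) in \(\varphi(m_T(w))\); and exteriority finishes. Expanding \(\varphi(\delta_{w_0})\,d\varphi(\delta_{w_1})\cdots d\varphi(\delta_{w_k})\) is the same computation as the paper's wedge of edge pullbacks, except that you never need to check \([w_r,w_{r+1}]\in T_1\) first. The only cleanup is in Step 3: by vertex-injectivity of \(S\), \(\rho_{\sigma'}\) arises from exactly one path, so its coefficient is exactly \(1\), not ``\(\pm1\) and possibly more''.
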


\begin{proof}
Up until degree \(1\), our argument is similar to the one given in \cite{BeggsMajid2020QRG}. We include the details because they form the basis of the higher-dimensional argument.

Let \( \phi \colon \Omega^\bullet_{T }\to \Omega^\bullet_{S}\) be a DGA morphism. We construct a unique morphism \(\widehat g\colon \widehat{S}\to \widehat{T}\) such that \(\phi=g^\ast.\)

First, since \( \phi^0: \Omega_T^0\rightarrow \Omega_S^0\) defines a unital algebra morphism, the collection \(\{ \phi^0(\delta_t) \}_{t\in T_0}\) is a mutually-orthogonal set of idempotents in \(\Omega_S^0\) summing to \(1_S\in \Omega_S^0\). Hence, \(S_0 = \bigsqcup_{t\in T_0 } A_t,\) where \(A_t = \{s\in S_0  \ \lvert \ \phi^0(\delta_t)(s)=1_{\Bbbk}\}\), and for each \(s\in S_0 \) there exists a unique \( t_s \in T_0 \) such that \(s \in A_{t_s}\). This gives a well-defined map \( g_0: S_0 \rightarrow T_0,\) \(s \rightarrow t_s\), and, by construction \( \phi^0( \delta_t) = \sum_{s\in A_t}\delta_s = \sum_{\{s \in S_0, \ g_0(s)=t \}}\delta_s. \)

It remains to verify the condition of Lemma \ref{lemma: vertex maps into completed cores}. We isolate the degree-one case, and the higher-dimensional condition follows by considering products of \(1\)-forms.

Let \([a,b]\in T_1\). Since \(\omega_{[a,b]}=\delta_a d \delta_b,\) we have \(\phi(\omega_{[a,b]}) = \phi(\delta_a) d\phi(\delta_b). \) By the above formula for \(\phi^0\), 
\begin{equation}\label{equation: edge pullback formula}
\phi(\omega_{[a,b]}) = \sum_{\substack{[x,y]\in S_1, \\ g_0(x)=a,\ g_0(y)=b}}
\omega_{[x,y]}. \tag{1}
\end{equation} 
Where we used that, for \(x\neq  y\),
\[ \delta_xd_S\delta_y = \begin{cases}
\omega_{[x,y]},&[x,y]\in S_1,\\
0,&[x,y]\notin S_1. \end{cases} \]

We now prove that every source edge maps to a genuine target edge. Let \([x,y]\in S_1\), and set \( a:=g_0(x), \ b:=g_0(y) .\) If \(a=b\), then \([a,a]\) compresses to the vertex \([a]\in T_0\), so the extension condition is automatically satisfied. We may therefore assume \(a\neq  b\). In this case, we claim that \([a,b]\in T _1\).

For \(f\in\Omega_S^0\) and \(x\neq  y\), differentiating \(f\delta_y=f(y)\delta_y\) and multiplying on the left by \(\delta_x\) gives \begin{equation}\label{eq:useful-identity-in-fullness-proof} \delta_x(d_Sf)\delta_y = \bigl(f(y)-f(x)\bigr)\delta_xd_S\delta_y. \end{equation} Since \(\delta_x\phi(\delta_a)=\delta_x,\ \phi(\delta_b)(x)=0, \ \phi(\delta_b)(y)=1_\Bbbk,\) equation \eqref{eq:useful-identity-in-fullness-proof} gives
\[ \begin{aligned}
0\neq \omega_{[x,y]} 
&=\delta_xd_S\phi(\delta_b)\delta_y=\delta_x\phi(d_T\delta_b)\delta_y\\
&=\delta_x\phi(\delta_a)\phi(d_T\delta_b)\delta_y=\delta_x\phi(\delta_a d_T\delta_b)\delta_y.
\end{aligned} \]
Consequently, \(\delta_a d_T\delta_b\neq 0\). Since \(a\neq  b\), this guarantees that \([a,b]\in T_1\).

We now verify the higher-dimensional condition of Lemma \ref{lemma: vertex maps into completed cores}. Let \(\sigma=[v_0,\ldots,v_p]\in S_p\) and denote by \((w_0,\ldots,w_q)\) the compression of \(( g_0(v_0),\ldots,g_0(v_p)),\) obtained by retaining one entry from each maximal constant block. We claim that \([w_0,\ldots,w_q]\in T_q.\)

If \(q=0\), there is nothing to prove. If \(q=1\), the claim follows from the edge case. Suppose that \(q\ge2\). Choose indices \(0\leq i_0<i_1<\ldots<i_q\leq p\) with \(i_r\) belonging to the \(r\)-th maximal constant block, so that \(g_0(v_{i_r})=w_r.\) Let \(\iota\colon[q]\hookrightarrow[p]\) be the order-preserving injection with \(\iota(r)=i_r\). Then \[ \sigma' := S(\iota)(\sigma) =[v_{i_0},\ldots,v_{i_q}] \in S_q. \] For every \(0\leq r<q\), the edge \([v_{i_r},v_{i_{r+1}}]\) is a \(1\)-face of \(\sigma'\). Since \(g_0(v_{i_r}) = w_r, \ g_0 (v_{i_{r+1}}) = w_{r+1}, \ w_r \neq  w_{r+1},\) the edge case gives \( [w_r, w_{r+1}] \in T_1.\)

Consider \( m_T ( w_0, \ldots, w_q) := \omega_{[w_0,w_1]} \wedge \ldots \wedge \omega_{[w_{q-1},w_q]}
\in \Omega_T^q.\) By multiplicativity of \( \phi \) and \eqref{equation: edge pullback formula},
\[ \phi\bigl(m_T(w_0,\ldots,w_q)\bigr) = \bigwedge_{r=0}^{q-1}
\phi\bigl(\omega_{[w_r,w_{r+1}]}\bigr) = \bigwedge_{r=0}^{q-1}
\bigg(
\sum_{\substack{[x,y]\in S_1\\
g_0(x)=w_r,\ g_0(y)=w_{r+1}}}
\omega_{[x,y]}
\bigg).\] For each \( 0 \leq r < q\), the \(r\)-the factor contains \( \omega_{[v_{i_r}, v_{i_{r+1}}]} \) with coefficient \(1_{ \Bbbk }\). Selecting these terms in all \(q\) factors gives \( \omega_{[v_{i_0}, v_{i_1}]} \wedge \ldots \wedge \omega_{[v_{i_{q-1}}, v_{i_q}]} = \rho_{ \sigma' }.\) Since \(S\) has injective vertex maps, no other tuple produces the same basis element \(\rho_{\sigma'}\). Thus \(\rho_{\sigma'}\)
has coefficient \(1_{\Bbbk}\) in \(\phi(m_T)\). In particular, \(\phi(m_T)\neq 0,\)
and \(m_T\neq 0\). By definition, this is equivalent to \([w_0, \ldots, w_q] \in T_q.\)

Lemma \ref{lemma: vertex maps into completed cores} now tells us that \(g_0\) extends uniquely to a morphism \( \widehat g\colon\widehat S\to\widehat T,\ \widehat g([v_0,\ldots,v_p]) = [g_0(v_0) , \ldots, g_0(v_p)].\)

By Remark \ref{remark:unique-lift-simplicialmap}, this morphism is the underlying semisimplicial map of a unique simplicial map \(g\colon L(S)\rightarrow L(T).\) It remains to identify \(g^\ast\) with \(\phi\). By construction, \(\phi\) and \(g^*\) agree in degree \(0\). For every
\([a,b]\in T_1\), equation \eqref{equation: edge pullback formula} is precisely the pullback formula \[ \phi(\omega_{[a,b]}) = \sum_{\substack{[x,y]\in S_1\\ g_0(x)=a,\ g_0(y)=b}} \omega_{[x,y]} = g^*(\omega_{[a,b]}).\] Thus \(\phi\) and \(g^*\) agree in degree \(1\). Since \(\Omega_T^\bullet\) is exterior, it is generated as an algebra by its degree-\(0\) and degree-\(1\) parts. Therefore \(\phi=g^*\) in every degree.

Finally, \(\widehat g\) is unique since its vertex map is uniquely reconstructed from \(\phi^0\), and Lemma \ref{lemma: vertex maps into completed cores} gives at most one extension of that vertex map. Hence \(\Omega^\bullet\) is full.
\end{proof}

\begin{remark}
The use of \(\ssregh_{\leq n}\) is essential. A DGA morphism may identify vertices. Then a nondegenerate source edge may map to a degenerate target edge. Such a map is not a morphism \(S\to T \), but it is a morphism \(\widehat{S}\to \widehat{T}.\)
\end{remark}

Putting together Propositions \ref{proposition: omega functor faithful} and \ref{proposition: omega functor full}, we have that \(\Omega^\bullet: \bigl(\ssregh_{\leq n}\bigr)^{\mathrm{op}}\to \mathsf{Im}_n\) is fully faithful. We now characterize its essential image inside \(\mathsf{DG}_n\), namely those DGAs that arise as \( \Omega_S^\bullet \) for \( \widehat{S} \in \ssregh_{\leq n}\) up to DGA isomorphism. Throughout the remainder of the section, \( A = \Bbbk(X) \), where \( X \) is a finite set, and \( \{ \delta_x \}_{ x \in X}\) denotes the basis of primitive idempotents. For an \( A \)-bimodule \( M \), we write \(M_{x,y}:=\delta_x \cdot M\cdot \delta_y.\) The following is standard.

\begin{lemma}\label{lemma: fibers of FODC are 1 dimensional}
Let \(\Omega^1\) be a first-order differential calculus over \(A\). Then \( \Omega^1_{x,x} = 0, \)  and \(\Omega^1_{x,y} = \Bbbk \cdot (\delta_x d \delta_y) \) for \(x\neq y.\) In particular, \(\dim_\Bbbk \bigl(\Omega^1_{x,y}\bigr) \leq 1,\) when \( (x\neq y).\) \end{lemma}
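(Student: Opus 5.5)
The plan is to use the surjectivity condition together with the orthogonal idempotent decomposition of \(A=\Bbbk(X)\). Since \(1_A=\sum_{x}\delta_x\), every bimodule splits as \(\Omega^1=\bigoplus_{x,y}\Omega^1_{x,y}\). By surjectivity, \(\Omega^1\) is spanned by elements \(f\,dg\) with \(f,g\in A\). Writing \(f=\sum_u f(u)\delta_u\) and \(g=\sum_w g(w)\delta_w\), it follows that \(\Omega^1\) is spanned by the elements \(\delta_u\,d\delta_w\) with \(u,w\in X\). It therefore suffices to compute the projections \(\delta_x(\delta_u d\delta_w)\delta_y=\delta_{x,u}\,\delta_x(d\delta_w)\delta_y\). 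Hence \(\Omega^1_{x,y}\) is spanned by the elements \(\delta_x(d\delta_w)\delta_y\) for \(w\in X\).

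The key step is the identity already used in the fullness proof. Applying the Leibniz rule to \(\delta_w\delta_y=\delta_{w,y}\delta_y\) gives \((d\delta_w)\delta_y+\delta_w\,d\delta_y=\delta_{w,y}\,d\delta_y\). Multiplying on the left by \(\delta_x\) yields
\[
\delta_x(d\delta_w)\delta_y=\bigl(\delta_{w,y}-\delta_{x,w}\bigr)\,\delta_x d\delta_y .
\]
In words, each spanning element of \(\Omega^1_{x,y}\) is a scalar multiple of \(\delta_x d\delta_y\), with scalar \(\delta_{w,y}-\delta_{x,w}\).

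Two consequences follow. For \(x\neq y\), the elements \(\delta_x d\delta_y\) lie in \(\Omega^1_{x,y}\) and span it, so \(\Omega^1_{x,y}=\Bbbk\cdot\delta_x d\delta_y\) and its dimension is at most one. For \(x=y\), the scalar is \(\delta_{w,x}-\delta_{x,w}=0\), so every spanning element vanishes and \(\Omega^1_{x,x}=0\). As a sanity check on the diagonal case, applying Leibniz to \(\delta_x^2=\delta_x\) gives \(\delta_x(d\delta_x)\delta_x=2\,\delta_x(d\delta_x)\delta_x\), which also forces it to vanish.

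No real obstacle is expected. The only care needed is in the reduction to spanning elements \(\delta_u d\delta_w\) and in using the right bimodule action on both sides.
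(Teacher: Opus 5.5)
Your proof is correct and follows essentially the same route as the paper. The paper computes \(\delta_x(f\,dg)\delta_y=f(x)\bigl(g(y)-g(x)\bigr)\delta_x d\delta_y\) directly for arbitrary \(f,g\) and then invokes surjectivity, whereas you first reduce to the spanning elements \(\delta_u d\delta_w\) and derive the same identity for \(g=\delta_w\) via the Leibniz rule (your unstated membership \(\delta_x d\delta_y\in\Omega^1_{x,y}\) for \(x\neq y\) follows from your identity with \(w=y\)).
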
 
\begin{proof}
For \(f,g\in A\), one has \( \delta_x (f dg) \delta_y = f(x)\bigl( g(y) - g(x) \bigr) \delta_x d\delta_y.\) Since \( \Omega^1 \) is spanned by elements \(f dg\), the statement follows. For \(x=y\), the coefficient \(g(y)-g(x)\) vanishes.
\end{proof}

As in Example \ref{example: graph differential calculus}, this tells us that every FODC \( \Omega \) over \( A \) determines a simple directed graph with edges given by \begin{equation}\label{eq:simple-directed-graph-from-fodc} \overline{E}_\Omega := \bigl\{ (x,y) \in X^2\mid x\neq y, \ \Omega^1_{x,y} \neq 0 \bigr\}.\end{equation} If \(\Omega^\bullet\) is an exterior DGA, we have surjections \( (\Omega^1)^{\otimes_A p}_{x,y} \twoheadrightarrow \Omega^p_{x,y}\), so each fiber \( \Omega^p_{x, y} \) is spanned by wedge monomials along length-\(p\) directed paths from \( x \) to \( y \) in \(\overline{E}_\Omega\). Exteriority alone, however, does not force the underlying path of a nonzero monomial to close up to a clique. This is guaranteed by condition \textup{(D2)} below, while condition \textup{(D3)} specifies which of these cliques are actually realized by nonzero monomials and requires those to form a basis (these statements are made precise in the proof of Proposition \ref{proposition: face closure of nonzero monomials}).

\begin{definition}\label{def: concrete DGAs essential image}
Let \(\widehat{\mathsf{DG}}_n\) be the full subcategory of \(\mathsf{DG}_n\) whose objects \((\Omega^\bullet,\wedge,d)\) satisfy the following conditions.

\begin{enumerate}[label=\textbf{(D\arabic*)},ref=D\arabic*,nosep,labelsep=.5em,align=left]
\item There exist a finite set \(X\) and an algebra isomorphism
\(\iota\colon\Bbbk(X)\cong\Omega^0\) such that, writing again \(\delta_x\) for \(\iota(\delta_x)\), the DGA \(\Omega^\bullet\) is exterior.

\item For \(p>1\) and \(x,y\in X\),
\( \Omega^p_{x,y}=0 \ \text{whenever}\ \Omega^1_{x,y}=0.\)
\item For \(x,y\in X\), put
\[ \omega_{[x,y]}:=
\begin{cases}
\delta_xd(\delta_y),&x\neq y,\\ 
0,&x=y.
\end{cases}
\]
For \(1 \leq p \leq n\), define \[ P_p(x,y) := \Bigl\{ (v_0, \ldots, v_p) \ \Big|\ v_0 = x,\ v_p = y,\  (v_i,v_{i+1})\in \overline{E}_\Omega \text{ for } 0 \leq i < p \Bigr\}.\] For \(v=(v_0,\ldots,v_p)\in P_p(x,y)\), put \( m(v):= \omega_{[v_0,v_1]}\wedge\ldots\wedge \omega_{[v_{p-1},v_p]} \in \Omega^p_{x,y}, \) and \[ S_p(x,y) := \{ v \in P_p(x,y) \mid m(v)\neq 0\}.\] Then \((m(v))_{ v\in S_p(x,y)}\) is a \(\Bbbk\)-basis of \(\Omega^p_{x,y}\).
\end{enumerate}
\end{definition}
By construction, for \(\omega_{[x,y]}\) as in {(D3)}, \( \omega_{[x,y]} \in \Omega^1_{x,y}\), and, for \(x\neq y\), Lemma \ref{lemma: fibers of FODC are 1 dimensional} implies that \(\omega_{[x,y]} \neq 0 \) if and only if \( (x,y) \in\overline E_\Omega\). Similar as to how we argued in Example \ref{example: graph differential calculus} to show that the differential is canonical, one can verify that the following holds.

\begin{lemma}\label{lemma: incidence basis}
Let \(\Omega^\bullet\in\widehat{\mathsf{DG}}_n\) with \((X,\iota)\), and \( \omega_{[x,y]} := \delta_x d\delta_y\) as in Definition \ref{def: concrete DGAs essential image}. Then \( d \delta_x = \sum_{u\in X} \bigl(\omega_{[u,x]}-\omega_{[x,u]} \bigr)\) and, for \(x \neq y,\) \( d(\omega_{[x,y]}) = \sum_{u\in X} \bigl( \omega_{[u,x]} \wedge \omega_{[x,y]} -  \omega_{[x,u]}\wedge \omega_{[u,y]} + \omega_{[x,y]}\wedge \omega_{[y,u]} \bigr).\) \end{lemma}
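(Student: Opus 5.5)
The plan is to compute everything from the idempotent decomposition of $\Omega^0\cong\Bbbk(X)$, namely $1=\sum_{u\in X}\delta_u$, together with the Leibniz rule and $d(1)=0$.

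\textbf{Formula for $d\delta_x$.} First I decompose
\[ d\delta_x=\sum_{u,w\in X}\delta_u(d\delta_x)\delta_w . \]
Differentiating $\delta_u\delta_x=\delta_{u,x}\delta_x$ gives $(d\delta_u)\delta_x+\delta_u d\delta_x=\delta_{u,x}d\delta_x$. Multiplying on the left by $\delta_u$ and on the right by $\delta_w$ yields two facts. For $u\neq x$ and $w\neq x$, the component $\delta_u(d\delta_x)\delta_w$ vanishes. For $u=w=x$, the component $\delta_x(d\delta_x)\delta_x$ lies in $\Omega^1_{x,x}=0$ by Lemma \ref{lemma: fibers of FODC are 1 dimensional}. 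So only the components $\delta_u(d\delta_x)\delta_x=\omega_{[u,x]}$ (with $u\neq x$) and $\delta_x(d\delta_x)\delta_w$ (with $w\neq x$) survive.

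For the second kind, I use $\delta_x=1-\sum_{u\neq x}\delta_u$ and $d1=0$, which give
\[ \delta_x(d\delta_x)\delta_w=-\sum_{u\neq x}\delta_x(d\delta_u)\delta_w . \]
By the vanishing just shown, the only surviving term on the right is $u=w$, so this component equals $-\omega_{[x,w]}$. Since $\omega_{[x,x]}=0$ by convention, summing over all $u$ gives the stated formula for $d\delta_x$.

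\textbf{Formula for $d\omega_{[x,y]}$.} For $x\neq y$, I apply the graded Leibniz rule to $\omega_{[x,y]}=\delta_x d\delta_y$. This gives $d\omega_{[x,y]}=d\delta_x\wedge d\delta_y$, because $d^2=0$.

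Next I substitute the formula for $d\delta_x$ into both factors and expand the product of the two sums. Terms $\omega_{[a,b]}\wedge\omega_{[c,e]}$ vanish unless $b=c$, since $\delta_b\delta_c=0$ for $b\neq c$. The surviving products fall into four families:
\begin{itemize}
\item $\omega_{[u,x]}\wedge\omega_{[x,y]}$;
\item $-\omega_{[x,u]}\wedge\omega_{[u,y]}$, which arises from $(-\omega_{[x,u]})\wedge\omega_{[u,y]}$ with $u\ne y$;
\item $\omega_{[x,y]}\wedge\omega_{[y,u]}$, which arises from $(-\omega_{[x,y]})\wedge(-\omega_{[y,u]})$;
\item the cross term $\omega_{[u,x]}\wedge(-\omega_{[x,u']})$, which needs $x=y$ and is therefore absent.
\end{itemize}
I also need the boundary cases of the middle family, $u=x$ and $u=y$. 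Both vanish because $\omega_{[x,x]}=\omega_{[y,y]}=0$. With these checks the expansion matches the stated formula.

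\textbf{Main obstacle.} I expect the hardest part to be this careful index bookkeeping, especially the last step. It requires that the products $\omega\wedge\omega$ are $A$-balanced, so that matching only the middle index suffices, and it requires the degenerate index cases to be handled consistently through the convention $\omega_{[x,x]}=0$. Beyond that, nothing from (D2) or (D3) is needed; the argument uses only (D1) and the DGA axioms.
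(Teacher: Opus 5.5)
Your proposal is correct and follows the approach the paper indicates. The paper only says the lemma is verified as in the graph-calculus example, using the idempotent identities together with $d(1)=0$; your component analysis of $d\delta_x$ and the expansion of $d\omega_{[x,y]}=d\delta_x\wedge d\delta_y$ (using $\omega_{[a,b]}\wedge\omega_{[c,e]}=0$ for $b\neq c$) fill in exactly those details.
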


For \(p\geq 1\), extend the notation \(m(v)\) to arbitrary words \( v = (v_0, \ldots, v_p)\in X^{p+1} \) by  the same formula. Since \(\omega_{[x,y]}=0\) whenever \(x=y\) or \((x,y) \notin \overline{E}_\Omega\), one has \( m(v) = 0 \) unless \( v \) is a directed path in \( \overline{E}_\Omega \). This allows to express the differential uniformly as  \[ d m(v_0,\ldots,v_p) = \sum_{j=0}^{p+1}(-1)^j \sum_{w\in X}   m(v_0, \ldots, v_{j-1}, w, v_j, \ldots, v_p).\] The following statement is the algebraic counterpart of face-closure, and its proof also shows that every nonzero monomial is supported on a directed clique.

\begin{proposition}\label{proposition: face closure of nonzero monomials} Let \( \Omega^\bullet \in \widehat{\mathsf{DG}}_n\) with \((X,\iota)\), and \( \omega_{[x,y]}\) as above. If \(p \ge 2\) and \(   m(v_0, \ldots, v_p)\neq 0, \) then \(  m(v_0, \ldots, \widehat v_j, \ldots, v_p)\neq 0 \) for every \(0\leq j\leq p\).
\end{proposition}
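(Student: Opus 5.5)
The plan is to obtain the statement from the uniform formula for the differential on monomials,
\[ d\, m(u_0,\ldots,u_{p-1}) = \sum_{k=0}^{p}(-1)^k \sum_{w\in X} m(u_0,\ldots,u_{k-1},w,u_k,\ldots,u_{p-1}), \]
combined with the basis property (D3). This formula follows from Lemma \ref{lemma: incidence basis} and the graded Leibniz rule. It holds for \emph{every} word \(u\in X^{p}\), including words for which \(m(u)=0\) (for instance words with two equal consecutive letters, or with a consecutive pair not in \(\overline E_\Omega\)). The argument is by contradiction, and it treats all positions \(0\le j\le p\) uniformly.

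Fix \(v=(v_0,\ldots,v_p)\) with \(m(v)\neq0\) and \(p\ge2\), fix \(j\), and let \(u\) be the word of length \(p-1\ge1\) obtained by deleting \(v_j\). Suppose \(m(u)=0\). Then \(d\,m(u)=0\), so the right-hand side of the displayed formula vanishes. We will show that this is impossible.

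First we show that \(m(v)\) occurs exactly once among the words on the right. Since \(m(v)\neq0\), \(v\) is a directed path in \(\overline E_\Omega\), so consecutive entries of \(v\) are distinct. An insertion word equal to \(v\) must arise by inserting \(w=v_k\) at position \(k\) for some \(k\) such that deleting \(v_k\) from \(v\) gives \(u\). If two positions \(k<k'\) both had this property, comparing the two deletions letter by letter would force \(v_k=v_{k+1}=\cdots=v_{k'}\). This is impossible because consecutive entries differ. Hence the only such pair is \(k=j\), \(w=v_j\), and \(m(v)\) appears exactly once, with coefficient \((-1)^j\).

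Next we show that the remaining terms cannot cancel it. Each other term is indexed by a word different from \(v\). Such a term is either zero, or a nonzero monomial \(m(v')\) with \(v'\in S_p(v'_0,v'_p)\). We decompose \(\Omega^p=\bigoplus_{x,y}\Omega^p_{x,y}\) and project onto the summand \(\Omega^p_{v_0,v_p}\). By (D3), the nonzero monomials indexed by distinct words in \(S_p(v_0,v_p)\) form a basis of this summand, so distinct words give distinct, linearly independent basis vectors. Terms with other endpoints land in other summands. Several insertions might produce the same word \(v'\neq v\), but they contribute only to the coefficient of \(m(v')\), not to that of \(m(v)\). Therefore the coefficient of the basis vector \(m(v)\) in \(d\,m(u)\) is \((-1)^j\neq0\). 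This contradicts \(d\,m(u)=0\), so \(m(u)\neq0\).

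The main point needing care is justifying the uniform differential formula for arbitrary words, including degenerate ones. The plan is to derive it directly from Lemma \ref{lemma: incidence basis} and Leibniz by induction on length, writing \(m(v)=\omega_{[v_0,v_1]}\wedge m(v_1,\ldots,v_p)\). Here the convention \(\omega_{[x,x]}=0\) and the identity \(d\delta_x=\sum_u(\omega_{[u,x]}-\omega_{[x,u]})\) must be checked to remain consistent when \(x=y\). In that case \(d(0)=0\), and the right-hand side of Lemma \ref{lemma: incidence basis} at \(x=y\) telescopes to zero, so the formula holds trivially.

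As a by-product, applying the result to interior deletions shows that \(v_{j-1}\neq v_{j+1}\) and \((v_{j-1},v_{j+1})\in\overline E_\Omega\). Iterating deletions then shows that every nonzero monomial is supported on a directed clique, as announced before the statement.
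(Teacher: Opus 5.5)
Your proposal has a genuine gap, in the last paragraph: the base case $x=y$ of the uniform differential formula. With $\omega_{[x,x]}=0$, the first and third terms of the right-hand side of Lemma~\ref{lemma: incidence basis} vanish at $x=y$. What remains is $-\sum_{u\in X}\omega_{[x,u]}\wedge\omega_{[u,x]}$. Nothing telescopes, and this element is not zero for a general exterior DGA over $\Bbbk(X)$.

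For a concrete example, take the universal exterior DGA of $\Bbbk(X)$ truncated to degrees $\le n$, with $n\ge 2$ and $|X|\ge 2$. It satisfies \textup{(D1)} and \textup{(D3)}. In it, the element above equals $-\delta_x\otimes(1-\delta_x)\otimes\delta_x\neq 0$. There $m(x,y,x)\neq 0$ while $m(x,x)=0$, so the proposition itself fails. Your argument never invokes \textup{(D2)}, so it would "prove" the statement in this example too, and this is exactly the step where it breaks.

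Concretely, for $1\le j\le p-1$ the only consecutive pair of $u$ that can be degenerate is $(v_{j-1},v_{j+1})$. The case $v_{j-1}=v_{j+1}$ is handled only through the formula for a word with a repeated letter, which is precisely what fails. For $x\neq y$ with $(x,y)\notin\overline E_\Omega$ no extra hypothesis is needed. There $\omega_{[x,y]}=\delta_x d\delta_y$ still holds, so $d\omega_{[x,y]}=d\delta_x\wedge d\delta_y$ and the lemma applies verbatim.

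The repair is one line. The element $-\sum_u\omega_{[x,u]}\wedge\omega_{[u,x]}$ lies in $\Omega^2_{x,x}$, and this space vanishes by \textup{(D2)} because $\Omega^1_{x,x}=0$ (Lemma~\ref{lemma: fibers of FODC are 1 dimensional}). Alternatively, exclude $v_{j-1}=v_{j+1}$ directly: otherwise $m(v_{j-1},v_j,v_{j+1})$ would be a nonzero element of $\Omega^2_{x,x}=0$.

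With this fix, the rest of your argument is correct, and it is a leaner route than the paper's. The paper proceeds as follows:
\begin{itemize}
\item it first uses \textup{(D2)} to show that all vertices of a nonzero monomial are pairwise distinct and that every diagonal $v_i\to v_k$ exists;
\item it disposes of $j=0$ and $j=p$ by associativity;
\item only then does it apply the incidence formula, to the genuine path word $\tau$ projected to the $(v_0,v_p)$-fiber, where it must separately argue that the boundary insertions vanish.
\end{itemize}
Your observation is that consecutive distinctness alone makes the insertion reproducing $v$ unique. This lets you treat all $j$ uniformly, and it confines the use of \textup{(D2)} to the vanishing of $\Omega^2_{x,x}$.
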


\begin{proof} We first observe two properties of nonzero monomials. Suppose that \(  m(v_0,\ldots,v_p)\neq 0.\) For every \(i<j\), the submonomial \( m(v_i, \ldots, v_j) \in\Omega^{j-i}_{v_i,v_j}\) is nonzero, since otherwise associativity would force \( m(v_0,\ldots,v_p)=0\). 

Moreover, the vertices \(v_0,\ldots,v_p\) are distinct. Indeed, suppose that \(v_i=v_j=x\) for some \(i<j\). By
associativity, \(  m(v_i,\ldots,v_j) \in \Omega^{j-i}_{x,x}\) must be nonzero, for otherwise \(  m(v_0,\ldots,v_p)\) would vanish. If \(j-i=1\), this contradicts \(\Omega^1_{x,x}=0\). If \(j-i>1\), condition \textup{(D2)} and the equality \(\Omega^1_{x,x}=0\) imply that \(\Omega^{j-i}_{x,x}=0\), again a contradiction.

Now apply these observations to \( m(v_0, \ldots, v_p).\) For every \(i < j\), the submonomial \( m(v_i, \ldots , v_j) \in \Omega^{j-i}_{v_i, v_j}\) is nonzero. If \( j = i+1\), then \((v_i, v_j)\in \overline{E}_\Omega\) by definition. If \(j-i > 1\), condition \textup{(D2)} implies that \( \Omega^1_{v_i, v_j} \neq 0 \), and hence again \( (v_i,v_j) \in \overline{E}_\Omega\). So every directed diagonal \(v_i \to v_j\), for \(i<j\), exists.

For \(j=0\) or \(j=p\), the face-closure claim is immediate. Indeed, if either \(  m(v_1,\ldots,v_p)=0\) or \(  m(v_0,\ldots,v_{p-1})=0\), associativity forces \(  m(v_0,\ldots,v_p)=0\).

Now fix \(1\leq j\leq p-1\) and put \(\tau=(u_0,\ldots,u_{p-1}):=(v_0,\ldots,\widehat v_j,\ldots,v_p)\). Since \( \omega_{[v_{j-1},v_j]}\wedge \omega_{[v_j,v_{j+1}]}\neq 0\), we have \(\Omega^2_{v_{j-1},v_{j+1}}\neq 0\), and \textup{(D2)} gives
\(\Omega^1_{v_{j-1},v_{j+1}}\neq 0\). Hence \(\tau\) is a directed path word. Assume, for contradiction, that \(  m(\tau)=0\). Projecting the incidence formula for \(d  m(\tau)\) to the \((v_0,v_p)\)-fiber gives \[ 0  = \sum_{r=1}^{p-1}(-1)^r \sum_{w\in X}   m(u_0,\ldots,u_{r-1},w,u_r,\ldots,u_{p-1}), \]
where the boundary contributions \( r = 0 \) and \(r = p \) vanish (non-zero projection forces \( w = v_0\) or \(w = v_p\) respectively, but the resulting monomial then has a repeated vertex and vanishes). Every nonzero term remaining in the sum is again supported on a tuple of pairwise distinct vertices. Exactly one of these terms reproduces the ordered tuple \((v_0,\ldots,v_p)\), namely the insertion at the position vacated by \(v_j\), with \(w=v_j\). It contributes \((-1)^j  m(v_0, \ldots, v_p) \neq 0.\) Every other nonzero term corresponds to a different ordered tuple and is therefore linearly independent of \(  m(v_0,\ldots,v_p)\) by \textup{(D3)}. Hence this contribution cannot cancel, giving a contradiction.
\end{proof}

\begin{theorem}\label{theorem: essential image characterization}
The essential image of \( \Omega^\bullet(-)\colon \bigl(\ssregh_{\leq n}\bigr)^{\mathrm{op}} \rightarrow \mathsf{DG}_n \) is precisely \(\widehat{\mathsf{DG}}_n\). In particular, \(\Omega^\bullet(-):\bigl(\ssregh_{\leq n}\bigr)^{\mathrm{op}} \rightarrow
\widehat{\mathsf{DG}}_n\) is an equivalence of categories.
\end{theorem}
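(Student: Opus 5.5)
Since Propositions \ref{proposition: omega functor faithful} and \ref{proposition: omega functor full} already give full faithfulness, the equivalence follows once we show two things: every \(\Omega^\bullet_S\) lies in \(\widehat{\mathsf{DG}}_n\), and every object of \(\widehat{\mathsf{DG}}_n\) is isomorphic to some \(\Omega^\bullet_S\). Full subcategories are closed under isomorphism here because (D1)--(D3) are invariant under DGA isomorphism: an isomorphism preserves \(\Omega^0\cong\Bbbk(X)\), fibers \(\Omega^p_{x,y}\), the forms \(\delta_x d\delta_y\), and hence monomials \(m(v)\).

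\emph{Image lies in \(\widehat{\mathsf{DG}}_n\).} Take \(X=S_0\), so \(\Omega^0_S=\Bbbk(S_0)\). For exteriority, observe that \(\delta_x d\delta_y=\omega_{[x,y]}\) for \([x,y]\in S_1\), which gives surjectivity in degree one. By Lemma \ref{lemma: simplices inject into clique complex}, every \(\sigma=[v_0,\ldots,v_p]\in S_p\) has all consecutive edges in \(S_1\), and the product formula gives \(\rho_\sigma=m(v_0,\ldots,v_p)\). Hence the iterated wedge is surjective. For (D2), note that \(\Omega^p_{x,y}\) is spanned by \(\rho_\sigma\) with \(s(\sigma)=x\) and \(t(\sigma)=y\), and such a \(\sigma\) has \([x,y]\) as a face by Lemma \ref{lemma: simplices inject into clique complex}, so \(\Omega^1_{x,y}\neq0\). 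For (D3), note that \(\overline E_{\Omega}\) is exactly \(S_1\), and that \(m(v)=\rho_{[v]}\) if \(v\in\operatorname{vert}_p(S_p)\) and \(m(v)=0\) otherwise. This is an induction on \(p\) using the product rule \(\rho_\sigma\wedge\rho_\tau=\rho_{\sigma\star\tau}\) or \(0\), together with vertex-injectivity of \(S\). So \(S_p(x,y)\) corresponds bijectively to the simplices from \(x\) to \(y\), whose forms are a basis.

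\emph{Essential surjectivity.} Given \(\Omega^\bullet\in\widehat{\mathsf{DG}}_n\) with \((X,\iota)\), define \(S_0=X\) and \(S_p=\bigsqcup_{x,y}S_p(x,y)\subseteq X^{p+1}\) for \(1\le p\le n\). Face maps delete entries. They are well defined, meaning they land in \(S_{p-1}\), exactly by Proposition \ref{proposition: face closure of nonzero monomials} for \(p\ge2\); for \(p=1\) they just give vertices. The semisimplicial identities hold automatically for deletion maps. Vertex-injectivity holds by construction, and distinctness of vertices holds because, as shown in the proof of Proposition \ref{proposition: face closure of nonzero monomials}, nonzero monomials have pairwise distinct vertices (for \(p=1\), because \(\Omega^1_{x,x}=0\)). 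Thus \(S\in\ssreg_{\le n}\).

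Next define \(\Phi\colon\Omega^\bullet_S\to\Omega^\bullet\) by \(\iota\) in degree zero and \(\rho_{[v]}\mapsto m(v)\). By (D3) together with \(\Omega^p=\bigoplus_{x,y}\Omega^p_{x,y}\), this is a linear isomorphism in each degree, since \(\sum_{x,y}\delta_x=1\). Bimodule compatibility follows from \(m(v)\in\Omega^p_{v_0,v_p}\). Multiplicativity follows because \(m(v)\wedge m(w)=m(v\star w)\) when \(v_p=w_0\), and is zero otherwise by orthogonality of the idempotents. The concatenation \(v\star w\) lies in \(S\) exactly when \(m(v\star w)\ne0\), which matches the product in \(\Omega^\bullet_S\).

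\emph{Main obstacle: compatibility with differentials.} I would handle it with the uniform formula obtained from Lemma \ref{lemma: incidence basis} and the Leibniz rule:
\[
d\,m(v)=\sum_{j=0}^{p+1}(-1)^j\sum_{w\in X}m(v_0,\ldots,v_{j-1},w,v_j,\ldots,v_p).
\]
Each summand is \(m\) of a word, so it is nonzero exactly when that word lies in \(S_{p+1}\), and it then equals \(\Phi(\rho_\tau)\) for a \(\tau\) with \(d_j\tau=[v]\). This reproduces \(\Phi(d\rho_{[v]})\) term by term. The point to check is that the insertion word determines \(j\) uniquely, so that no \(\tau\) is counted twice. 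Inserting at position \(j\) yields \(\tau\) with \(d_j\tau=[v]\), and distinctness of the vertices of \(\tau\) makes the pair \((j,w)\) recoverable, up to the degenerate case \(w=v_{j-1}\) or \(w=v_j\), which gives a zero monomial. In degree \(n\), both differentials vanish because \(\Omega^{n+1}=0\). Hence \(\Phi\) is a DGA isomorphism. Combined with full faithfulness, this gives the equivalence.
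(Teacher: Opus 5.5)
Your proposal is correct and follows essentially the same route as the paper. You build \(S\) from the words with nonzero path monomial, use Proposition \ref{proposition: face closure of nonzero monomials} for face-closure and distinctness of vertices, and set \(\Phi(\rho_{[v]})=m(v)\), with bijectivity from (D3) and compatibility with \(d\) from Lemma \ref{lemma: incidence basis}; you also verify soundly several points the paper only asserts (that \(\Omega^\bullet_S\) satisfies (D1)--(D3), that \(\widehat{\mathsf{DG}}_n\) is closed under isomorphism, and that the differentials match term by term).
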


\begin{proof} Let \(S \in \ssreg_{\leq n}\) with \(\widehat S=UL(S)\in\ssregh_{\leq n}.\) Then \(\Omega^\bullet(\widehat S)=\Omega_S^\bullet\) satisfies conditions \textup{(D1)}--\textup{(D3)}, with \(X=S_0\) and \( \Bbbk(S_0) \cong \Omega_S^0.\)

Conversely, let \(\Omega^\bullet\in\widehat{\mathsf{DG}}_n\) with \((X,\iota)\) as in \textup{(D1)}. Define \( S \) by \[  S_0 := X, \qquad S_p := \left\{ (v_0,\ldots,v_p)\in X^{p+1}
\ \middle|\ m(v_0,\ldots,v_p)\neq 0
\right \} \quad (1\leq p\leq n),\] with face maps given by deleting vertices.

By Proposition \ref{proposition: face closure of nonzero monomials}, every face of a simplex lies again in \(S\). Hence \(S\) is a semisimplicial set. Its vertex maps are injective by construction, and its positive-dimensional simplices have distinct vertices by first part of the proof in that same proposition. Thus \(S \in \ssreg_{\leq n}\) and set \(\widehat S:=UL(S) \in \ssregh_{\leq n}.\)  

Define \( \Phi:\Omega^\bullet(\widehat S) = \Omega^\bullet_{S} \rightarrow \Omega^\bullet\) in degree zero by \(\Phi^0 := \iota \) and on higher-degree basis elements by \[\Phi \bigl( \rho_{[v_0,\ldots,v_p]}\bigr) = m\bigl(v_0,\ldots,v_p\bigr). \]
By \textup{(D3)}, this is a graded \(A\)-bimodule isomorphism. It is compatible
with the wedge product by construction, whereas Lemma \ref{lemma: incidence basis} gives compatibility with the differential. Therefore \(\Phi\) is a DGA isomorphism. 
\end{proof}

\section{Cellular sheaves and differential graded modules}\label{section: cellular sheaves, differential modules} Throughout this section, we fix \(S\in\ssreg_{\leq n}\) with \(\widehat S:=UL(S)\in\ssregh_{\leq n}\). We want to characterize the DG-modules over \(\Omega^\bullet (\widehat S) = \Omega^\bullet_S\) that correspond to cellular sheaves on \(S\). Strictly speaking, the DGA \( \Omega^\bullet_S\) is attached to \(\widehat S\), whereas the cellular data considered below are indexed by the simplices of \(S\). Since the base is fixed here, as the passage to \(\widehat S\) plays no role beyond the functoriality of \(\Omega^\bullet\), we work with \(S\) throughout. This section is organized as follows. First, we attach to every $F\in\mathsf{Sh}(S)$ a certain DG-module $M_F$ and study its properties. Afterwards, we define the objects (Definition \ref{definition: cellular module}) and the morphisms of the category $\prescript{\mathsf{c}}{S}{\mathcal{M}}^{\mathsf{DG}}$ (Definition \ref{def:cellularmorphisms}), which we call the category of cellular differential graded modules on $S$. Finally, Theorem \ref{theorem: equivalence between cellular sheaves and differential graded module} shows that this category is equivalent to $\mathsf{Sh}(S)$. We begin with some preliminary observations. For a left \(\Omega^\bullet_S\)-module \(M\), we write \(\mu_M \colon \Omega^\bullet_S \otimes_{A_S} M\rightarrow M,\ \alpha\otimes x\rightarrow \alpha \cdot x\) for the action map, and set \( \mu_\sigma := \mu_M ( \rho_\sigma\otimes-)\colon M\to M\) for \(\sigma\in S_p\), \(1\leq p\leq n\).

\begin{lemma}\label{lemma: tail-fiber decomposition}
For any left \(A_S\)-module \(M\), there is a \(\Bbbk\)-linear isomorphism \[\Omega_S^{p}\otimes_{A_S} M \cong \bigoplus_{\sigma\in S_p} \delta_{t(\sigma)} M, \quad \rho_\sigma \otimes m \rightarrow\delta_{t(\sigma)}m.\] 

\end{lemma}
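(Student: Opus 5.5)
The plan is to reduce the tensor product to a direct sum of rank-one pieces, one for each simplex. As a right \(A_S\)-module, \(\Omega^p_S=\bigoplus_{\sigma\in S_p}\Bbbk\rho_\sigma\), and the right action is \(\rho_\sigma\cdot g=g(t(\sigma))\rho_\sigma\). Each summand \(\Bbbk\rho_\sigma\) is therefore a right \(A_S\)-submodule, and the decomposition is one of right \(A_S\)-modules. Since \(-\otimes_{A_S}M\) commutes with finite direct sums, we get
\[
\Omega^p_S\otimes_{A_S}M\cong\bigoplus_{\sigma\in S_p}\Bigl(\Bbbk\rho_\sigma\otimes_{A_S}M\Bigr).
\]
It then suffices to identify each summand with \(\delta_{t(\sigma)}M\) via \(\rho_\sigma\otimes m\mapsto\delta_{t(\sigma)}m\).

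Fix \(\sigma\) and write \(t=t(\sigma)\). The key observation is that \(\Bbbk\rho_\sigma\cong\delta_tA_S\) as right \(A_S\)-modules, via \(\rho_\sigma\mapsto\delta_t\). Indeed, both modules are one-dimensional, and on both a function \(g\) acts by multiplication by the scalar \(g(t)\). Using the standard isomorphism \(eA\otimes_A M\cong eM\) for an idempotent \(e\), given by \(ea\otimes m\mapsto eam\), we obtain \(\Bbbk\rho_\sigma\otimes_{A_S}M\cong\delta_tM\) with \(\rho_\sigma\otimes m\mapsto\delta_t m\), which is the map in the statement.

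If one prefers to avoid citing the idempotent isomorphism, it can be checked directly. The map \((\rho_\sigma,m)\mapsto\delta_tm\) is \(A_S\)-balanced, because \(\rho_\sigma g\otimes m=g(t)\rho_\sigma\otimes m\) is sent to \(g(t)\delta_tm=\delta_t gm\). So it descends to the tensor product. The inverse \(\delta_tm\mapsto\rho_\sigma\otimes\delta_tm\) is well defined on \(\delta_tM\). Checking that the two maps are mutually inverse uses \(\rho_\sigma\otimes m=\rho_\sigma\delta_t\otimes m=\rho_\sigma\otimes\delta_tm\).

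Assembling the summands gives the claimed \(\Bbbk\)-linear isomorphism. The only point needing care is that the target \(\bigoplus_\sigma\delta_{t(\sigma)}M\) is an external direct sum: several simplices can share a tail, so the same subspace \(\delta_tM\) may appear repeatedly, once for each such simplex. With that understood, there is no real obstacle; the rest is bookkeeping that the balanced map respects the chosen decomposition.
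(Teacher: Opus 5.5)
Your proposal is correct and follows essentially the same route as the paper. Both decompose \(\Omega^p_S\) as a right \(A_S\)-module into the summands \(\Bbbk\rho_\sigma\cong\delta_{t(\sigma)}A_S\), use the idempotent isomorphism \(\delta_xA_S\otimes_{A_S}M\cong\delta_xM\) (with inverse \(\delta_xm\mapsto\delta_x\otimes m\)), and sum over \(\sigma\). Your remark that the target must be read as an external direct sum is a worthwhile clarification.
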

\begin{proof}
As right \(A_S\)-modules, \(\Omega_S^{p} \cong \bigoplus_{\sigma\in S_p} \delta_{t(\sigma)} A_S\).
For each \(\delta_x \in A_{S}\), the map \(\psi:(\delta_xA_S)\otimes_{A_S} M \rightarrow \delta_xM, \ \psi((\delta_xa)\otimes m) = \delta_x(am)\) is an isomorphism with inverse \(\phi(\delta_x m) = \delta_x\otimes m\), which is well-defined by idempotency. Summing over \(\sigma\) gives the result. 
\end{proof}

\begin{remark}\label{remark: semisimple decomposition of A_s modules}
By semisimplicity of \(A_S \cong \prod_{v\in S_0}\Bbbk\delta_v,\) every left \(A_S\)-module \(M\) decomposes as \(M=\bigoplus_{v\in S_0} M_v,\) where \( M_v \coloneq \delta_v M.\) Here \(\delta_v\) acts on \(M\) by the idempotent endomorphism \(m\to \delta_vm\). Since \(\delta_v\delta_w = \delta_{v,w}\delta_v\), we have \(\delta_v|_{M_v} = \mathrm{id}_{M_v}\) and \( \delta_v|_{M_w} = 0\) for \(v \neq  w\). For \(M\) to be finitely generated as \(A_S\)-module is equivalent to finite-dimensionality of each \(M_v\) as \(\Bbbk\)-vector space. Given the decomposition $M = \bigoplus_{v\in S_0}M_v$, we write \(E_{u,v} \coloneq \delta_u E \delta_v : M_v\to M_u,\) for the $(v,u)$-block of any $E\in  \text{End}_{\Bbbk}(M)$.\end{remark}

\begin{excerpt}\label{excerpt: general remarks on the module structure for a left Omega_S module}
Let \( \sigma = [v_0, \ldots, v_p]\in S_p\). Since \(\rho_\sigma = \rho_\sigma \delta_{v_p}\), there holds \(\rho_{\sigma}\cdot x=\rho_{\sigma}\cdot( \delta_{v_p}x)\) and \[\mu_{\sigma}\big\lvert_{M_u}=0\quad \text{whenever}\quad u\neq  v_p.\]
Moreover, \( \delta_u \cdot \mu_\sigma(x) = (\delta_u \wedge \rho_\sigma)\cdot x = \delta_{u,v_0} \mu_\sigma (x).\) Therefore \(\mu_\sigma ( M_{v_p} ) \subseteq M_{v_0},\) and each \(\rho_{\sigma}\) induces a block map \[ \mu_{\sigma}: M_{v_p}\rightarrow M_{v_0}\] while vanishing on other summands. By exteriority, \( \rho_\sigma = \rho_{[v_0,v_1]}\wedge\ldots\wedge\rho_{[v_{p-1},v_p]}\), so
\begin{equation}\label{equation: composability of action as composition of edge actions}
\mu_\sigma = \mu_{[v_0,v_1]} \circ \ldots \circ \mu_{[v_{p-1},v_p]} : M_{v_p}\to M_{v_0}.
\end{equation}
A left \(\Omega_S^\bullet\)-module structure on \(M\) is therefore completely determined by the family of edge operators \( \{\mu_e: M_{d_0(e)} \to M_{d_1(e)} \}_{ e\in S_1 }\), with the action of higher simplex generators recovered by \eqref{equation: composability of action as composition of edge actions}. Conversely, a family of edge maps \(\{\mu_e:M_{d_0(e)}\to M_{d_1(e)}\}_{e\in S_1}\) defines a unique left \(\Omega_S^\bullet\)-action provided that, whenever \(\rho_{[v_0,v_1]}\wedge\ldots\wedge\rho_{[v_{p-1},v_p]} = 0\) in \(\Omega_S^\bullet\), the corresponding composite \(\mu_{[v_0,v_1]}\circ\ldots\circ\mu_{[v_{p-1},v_p]}\) vanishes.
\end{excerpt}

For $S\in \ssreg_{\leq n}$ with associated DGA $\Omega_S^\bullet$, we write ${}_S\mathcal{M}^{\mathsf{DG}}$ for the category whose objects are left DG-modules $M$ over $\Omega_S^\bullet $ and whose morphisms are $\Omega_S^\bullet$-linear DG-maps $\phi:M\rightarrow N$ of degree $0$.

\begin{excerpt}\label{excerpt: differential graded module associated to a sheaf}
For \(F\in  \mathsf{Sh}(S)\), we now define a left DG-module over $\Omega^\bullet_S$ from this datum. Define the graded vector spaces \[M_{F}^q \coloneqq \bigoplus_{\tau\in S_q} F(\tau), \qquad M_F \coloneqq \bigoplus_{q\ge 0} M_{F}^q.\]
For \(\tau\in S_q\), \(x\in F(\tau)\), the \(\Omega^\bullet_{S}\)-action of a degree-\(p\) basis element \(\rho_\sigma\) is \[ \rho_\sigma\cdot x \coloneq  
\begin{cases}
F(\tau\rightarrow \sigma\star \tau)x \in F(\sigma\star \tau) \subseteq M_F^{p+q}, & \text{if }\sigma\star \tau\in S_{p+q},\\
0, & \text{otherwise.}
\end{cases}\]
For basis elements \(\rho_\sigma,\rho_\eta\) and \(x\in F(\tau)\), functoriality of \(F\) and the multiplication rule in \(\Omega_S^\bullet\) give that \( \rho_\sigma\cdot(\rho_\eta\cdot x) = (\rho_\sigma\wedge\rho_\eta)\cdot x.\) The equality also holds when one of the relevant concatenations is not defined, since both sides then vanish. Moreover, \( 1_{A_S}=\sum_{v\in S_0}\delta_v \) acts as the identity. Hence the action is associative and unital. The differential \(d_{M_F}\) is the signed coboundary \[ d_{M_F}(x) = \sum_{i=0}^{q+1}\ \sum_{\substack{\zeta\in S_{q+1}\\ d_i\zeta=\tau }} (-1)^i F(\tau\rightarrow \zeta) x.\]
\end{excerpt}

\begin{lemma}\label{lemma: differential graded module associated to a cellular sheaf satisfies the Leibniz rule}
In the notation of \ref{excerpt: differential graded module associated to a sheaf}, the internal differential \(d_{M_F}\) satisfies 
\[d_{M_F}^2=0, \quad \text{and}\quad 
d_{M_F}\big(\rho_\sigma\cdot x\big)
=\big(d\rho_\sigma\big)\cdot x + (-1)^p \rho_\sigma\cdot d_{M_F}(x),
\] for homogeneous $\rho_\sigma\in \Omega^{p}$ and $x\in M_F^q$.
\end{lemma}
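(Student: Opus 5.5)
The plan is to reduce both identities to the simplicial identities of \(S\) together with functoriality of \(F\). The cleanest route identifies \(M_F\) with the cellular cochain complex of \(F\) and reads the Leibniz rule as the statement that concatenation interacts with face deletion exactly as the cup product does in \(N^\bullet(L(S);\Bbbk)\).

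For \(d_{M_F}^2=0\), fix \(x\in F(\tau)\) with \(\tau\in S_q\). Then
\[
d_{M_F}^2 x=\sum_{\xi\in S_{q+2}}\Bigl(\sum_{(i,j)}(-1)^{i+j}\Bigr)F(\tau\rightarrow\xi)x ,
\]
where the inner sum runs over pairs \((i,j)\) with \(d_j\xi=\zeta\) and \(d_i\zeta=\tau\). Functoriality of \(F\) collapses each path \(\tau\rightarrow\zeta\rightarrow\xi\) to the single map \(F(\tau\rightarrow\xi)\). The required vanishing of the inner sum is then the same combinatorial statement as \(d^2=0\) in \(\Omega_S^\bullet\): the pairs \((i,j)\) with \(d_id_j\xi=\tau\) cancel in pairs via \(d_id_j=d_{j-1}d_i\) for \(i<j\). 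Vertex-injectivity ensures that each ordered pair of deleted positions corresponds to a unique intermediate \(\zeta\).

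For the Leibniz rule, take \(\sigma=[v_0,\ldots,v_p]\) and \(x\in F(\tau)\) with \(\tau=[w_0,\ldots,w_q]\), and compare coefficients of \(F(\tau\rightarrow\xi)x\) for each \(\xi\in S_{p+q+1}\). Every term on either side is a restriction map \(F(\tau\rightarrow\xi)\) applied to \(x\), by functoriality, multiplied by a sign, so the task is to match the signed index sets. I would split into cases according to whether \(v_p=w_0\) and \(\sigma\star\tau\) is defined.

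\textbf{Case 1: \(\sigma\star\tau\) is defined.} The left side is a sum over the faces \(\xi\) with \(d_i\xi=\sigma\star\tau\), carrying sign \((-1)^i\) for \(0\le i\le p+q+1\).
\begin{itemize}
\item Insertions at positions \(i\le p\) are matched by the terms of \((d\rho_\sigma)\cdot x\). These come from \(\eta\) with \(d_i\eta=\sigma\), \(\eta\star\tau=\xi\), and the same sign.
\item Insertions at positions \(i\ge p+1\) are matched by \((-1)^p\rho_\sigma\cdot d_{M_F}x\). Here \(\xi=\sigma\star\zeta\) with \(d_{i-p}\zeta=\tau\), and the sign is \((-1)^{p+(i-p)}\).
\item There is a clash at the junction. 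The term \(i=p+1\) in \(d\rho_\sigma\) inserts after \(v_p\), and the term \(i=0\) in \(d_{M_F}x\) inserts before \(w_0=v_p\). Neither yields a concatenable word in the standard way, and the remaining boundary terms must be checked separately.
\end{itemize}

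\textbf{Case 2: \(\sigma\star\tau\) is not defined.} The left side vanishes. The terms on the right survive only when a new vertex \(u\) is inserted between \(v_p\) and \(w_0\), which requires \(w_0\neq v_p\). The term \((d\rho_\sigma)\cdot x\) contributes \(\eta=[v_0,\ldots,v_p,u]\) with \(u=w_0\) and sign \((-1)^{p+1}\). The term \((-1)^p\rho_\sigma\cdot d_{M_F}x\) contributes \(\zeta=[u,w_0,\ldots]\) with \(u=v_p\) and sign \((-1)^p\). Both produce the same \(\xi=[v_0,\ldots,v_p,w_0,\ldots,w_q]\), so they cancel.

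The main obstacle is exactly this junction bookkeeping: the end-insertion terms, and the fact that \(\xi\) must lie in \(S\) for all the words involved. The latter uses face-closure of \(S\) (Lemma \ref{lemma: simplices inject into clique complex}). I would verify it by the cup-product identity in \(N^\bullet(L(S))\) as a sanity check.
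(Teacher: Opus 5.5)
Your proposal follows the paper's proof essentially step for step: the same $d_id_j=d_{j-1}d_i$ cancellation for $d_{M_F}^2=0$, the same case split on whether $\sigma\star\tau$ is defined, the same front/tail matching of insertions at positions $\le p$ and $\ge p+1$, and, when $\sigma\star\tau$ is undefined, the same cancellation of the $j=p+1$ and $\ell=0$ junction terms with signs $(-1)^{p+1}$ and $(-1)^p$.

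Two points need closing. First, the junction terms you leave ``to be checked separately'' in Case~1 are simply zero. Simplices of $S$ have pairwise distinct vertices, so the appended vertex $u$ in $[v_0,\ldots,v_p,u]$ differs from $v_p=w_0=s(\tau)$, and likewise the prepended vertex in $[u,w_0,\ldots,w_q]$ differs from $v_p$; hence both concatenations are undefined. Second, in Case~2 the other terms vanish because if $\theta\star\tau$ were defined for some $j\le p$, its $j$-th face would be $\sigma\star\tau$. This face-closure is just closure of $S$ under face maps, not Lemma~\ref{lemma: simplices inject into clique complex}.
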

\begin{proof}
The identity \(d_{M_F}^2=0\) follows from the semisimplicial identities, as the two codimension-two coface contributions associated with \(d_id_j=d_{j-1}d_i\), for \(i<j\), occur with opposite signs.

For the Leibniz rule, let $x\in F(\tau)$ for $\tau\in S_q$. We first discuss the case $\xi \coloneq \sigma\star \tau\in S_{p+q}$. By functoriality, 
\begin{equation}\label{equation: expression for the differential}
d_{M_F}\big(\rho_\sigma\cdot x\big) = d_{M_F}\big(F(\tau\rightarrow \xi)x\big) = \sum_{m=0}^{p+q+1}\sum_{\substack{\zeta\in S_{p+q+1}\\ d_m\zeta=\xi }}\ (-1)^m \ F(\tau\rightarrow \zeta)x.
\end{equation}
We split the $m$-sum into ($0\leq m\leq p$) and ($p+1\leq m\leq p+q+1$) parts. For each pair $(m,\zeta)$ with $m\leq p$ and $d_m\zeta=\xi$, there exists a unique $\theta\in S_{p+1}$ with $d_m\theta=\sigma$ and $\zeta=\theta\star \tau$. So the front contribution becomes
\[\sum_{j=0}^p (-1)^j \sum_{\substack{\theta\in S_{p+1}, \\ d_j\theta=\sigma}}
F(\tau\rightarrow \theta\star \tau)  x .\] Now, use that \(\rho_\theta \cdot x= F(\tau \rightarrow \theta \star \tau)x\) when \( \theta \star \tau\) is defined and \(0\) otherwise.

For \(j=p+1\), every \(\theta\in S_{p+1}\) satisfying \(d_{p+1}\theta=\sigma\) has vertex
word \([v_0,\ldots,v_p,u].\)
Since \(\theta\) has pairwise distinct vertices, \(u\neq  v_p\).
On the other hand, \(s(\tau)=v_p\), because
\(\sigma\star\tau\) is defined. Hence \(t(\theta)=u\neq  s(\tau)\), so \(\theta\star\tau\) is not defined and we can extend the above sum \[ \sum_{j=0}^p (-1)^j \sum_{\substack{ \theta \in S_{p+1}, \\ d_j\theta=\sigma}}
F(\tau\rightarrow \theta\star \tau)  x =
\Bigg(\sum_{j=0}^{p+1} (-1)^j \sum_{\substack{\theta\in S_{p+1},\\ d_j\theta=\sigma}} \rho_\theta\Bigg)\cdot x
= (d\rho_\sigma)\cdot x.
\]\smallskip\noindent
Likewise for the tail part, write $i=p+\ell$ with $1\leq \ell\leq q + 1$. Then $d_i\zeta=\xi$ if and only if \(\zeta = \sigma\star \eta\) with \( d_\ell \eta = \tau\). So the tail contribution becomes
\[ \sum_{\ell =1 }^{q+1}(-1)^{p+\ell} \sum_{\substack { \eta \in S_{q+1}\\ d_\ell \eta= \tau}}  F(\tau \rightarrow \sigma \star \eta)x
=(-1)^p \rho_\sigma \cdot
\Big(\sum_{\ell =0 }^{q+1} \sum_{\substack{\eta \in S_{q+1},\\ d_\ell \eta= \tau}} (-1)^\ell F(\tau\rightarrow \eta)\Big) x = (-1)^p \rho_\sigma\cdot d_{M_F}(x), \] where we again argue by distinctness of vertices to include the \(\ell=0\) term. Adding front and tail proves the statement.

It remains to consider the case in which \(\sigma\star\tau\) is not defined. Then \( \rho_\sigma \cdot x=0\), so the left-hand side of the Leibniz identity vanishes. 

In the expansion of \(( d \rho_\sigma) \cdot x\), a term indexed by
\( d_j \theta = \sigma\) with \(j \leq p\) can be nonzero only if \( \theta \star \tau\) is defined. But then taking its \(j\)-th face would imply that \(\sigma \star \tau\) is defined, a contradiction. Similarly, in the expansion of \((-1)^p \rho_\sigma \cdot d_{M_F}x\), a term indexed by \(d_\ell\eta=\tau\) with \(\ell\ge1\) can be nonzero only if
\(\sigma\star\eta\) is defined, and taking its \((p+\ell)\)-th face again produces \(\sigma\star\tau\), a contradiction.

Thus only the terms \(j=p+1\) and \(\ell=0\) can remain. These terms are indexed by the same simplices \[ \zeta=\theta\star\tau=\sigma\star\eta \]
and both have value \(F(\tau\to\zeta)x\). Their signs are respectively \( (-1)^{p+1}\) and \((-1)^p\), so they cancel. 
\end{proof}

Combining \ref{excerpt: differential graded module associated to a sheaf} and \ref{lemma: differential graded module associated to a cellular sheaf satisfies the Leibniz rule} we obtain that $M_F$ is indeed a left DG-module over $\Omega^\bullet_S$.

\begin{remark}\label{remark:DG-modules-over-S} The categories \({}_S\mathcal{M}^{\mathsf{DG}}\) can also be  considered over varying bases. Namely, let $\mathcal{M}^{\mathsf{DG}}$ be the category whose objects are pairs $(M,S)$ with $M\in {}_S\mathcal{M}^{\mathsf{DG}}$ and morphisms $(M,S)\rightarrow (N,T)$ are pairs $(f,\phi)$ where $f:S\rightarrow T$ is a semisimplicial map and $\phi:M\rightarrow N$ is a degree zero chain morphism such that \[\phi (\Omega({f})(\alpha)\cdot m)=\alpha \cdot \phi(m), \quad \alpha \in \Omega_T^\bullet,\ m \in M.\]
Composition is given by $(g,\psi)\circ(f,\phi)=(g\circ f,\ \psi\circ \phi)$. The projection $\pi:\mathcal{M}^{\mathsf{DG}}\rightarrow \ssreg_{\leq n}$ is a Grothendieck opfibration (see \cite{Vistoli2005Descent}) whose fiber over $S$ is ${}_S\mathcal{M}^{\mathsf{DG}}$. In particular, morphisms over $\mathrm{id}_S$ are the $\Omega_S^\bullet$-linear DG-maps.
\end{remark}

\begin{excerpt}\label{excerpt: extending the assignment of cellular sheaves to differential graded algebras to morphisms}
For \(F \in \mathsf{Sh}(S),\) write \(\mathcal{R}(F)=M_F\) and for \(\eta\in \operatorname{Hom}_{\mathsf{Sh}(S)}(F,F')\), define \(\mathcal{R}(\eta):M_F\rightarrow M_{F'}\) by the block map \[\mathcal{R}(\eta) \big \lvert_ {M_F^q} \coloneq \bigoplus_{\tau \in S_q} \eta_\tau.\] 
\end{excerpt}

\begin{lemma}\label{lemma: assignments of cellular sheaves to differential graded algebras is functorial}
For fixed $S\in \ssreg_{\leq n}$, the assignments of \ref{excerpt: differential graded module associated to a sheaf} and \ref{excerpt: extending the assignment of cellular sheaves to differential graded algebras to morphisms} define a functor \(\mathcal{R}: \mathsf{Sh}(S) \rightarrow {}_S\mathcal{M}^{\mathsf{DG}}\).
\end{lemma}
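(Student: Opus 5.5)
The plan has two parts. First, $\mathcal{R}(F)=M_F$ is already an object of ${}_S\mathcal{M}^{\mathsf{DG}}$ by \ref{excerpt: differential graded module associated to a sheaf} and Lemma \ref{lemma: differential graded module associated to a cellular sheaf satisfies the Leibniz rule}. It then remains to show three things for a natural transformation $\eta\colon F\to F'$: the block map $\mathcal{R}(\eta)=\bigoplus_\tau\eta_\tau$ is a degree-zero map, it is $\Omega^\bullet_S$-linear, and it commutes with the differentials. After that, we check compatibility with identities and composition.

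Degree zero is immediate, since $\mathcal{R}(\eta)$ sends $F(\tau)\subseteq M_F^q$ into $F'(\tau)\subseteq M_{F'}^q$. Every element of $\Omega_S^\bullet$ is a linear combination of the $\rho_\sigma$ (with $\rho_{[v]}=\delta_v$ in degree zero), and $M_F$ is the direct sum of the $F(\tau)$. So $\Omega^\bullet_S$-linearity reduces to checking $\mathcal{R}(\eta)(\rho_\sigma\cdot x)=\rho_\sigma\cdot\mathcal{R}(\eta)(x)$ for $x\in F(\tau)$. I split into the same two cases as in the definition of the action.

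\emph{Case $\sigma\star\tau\in S_{p+q}$.} The left side is $\eta_{\sigma\star\tau}F(\tau\to\sigma\star\tau)x$ and the right side is $F'(\tau\to\sigma\star\tau)\eta_\tau x$. These agree by the naturality square of $\eta$ for the relation $\tau\leq\sigma\star\tau$ in $P_S$; this relation holds because $\tau$ is the face of $\sigma\star\tau$ obtained by deleting the first $p$ vertices.

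\emph{Case $\sigma\star\tau$ undefined.} Both sides vanish. Whether $\sigma\star\tau$ is defined depends only on $\sigma$ and $\tau$, not on the sheaf, so the same case applies in $M_{F'}$.

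The degree-zero generators $\delta_v$ are the special case $p=0$: there $\sigma\star\tau$ is defined iff $v=s(\tau)$, and the map $F(\tau\to\tau)$ is the identity.

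For the differential, take $x\in F(\tau)$ and expand both sides using the signed coboundary:
\[\mathcal{R}(\eta)d_{M_F}x=\sum_i\sum_{d_i\zeta=\tau}(-1)^i\eta_\zeta F(\tau\to\zeta)x,\qquad d_{M_{F'}}\mathcal{R}(\eta)x=\sum_i\sum_{d_i\zeta=\tau}(-1)^iF'(\tau\to\zeta)\eta_\tau x.\]
The index sets $\{(i,\zeta)\}$ are identical, since they depend only on $S$. The two sums therefore agree termwise, again by naturality.

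Functoriality is clear, since both identities and composition are computed blockwise: $\mathcal{R}(\mathrm{id}_F)=\bigoplus_\tau\mathrm{id}_{F(\tau)}$ and $\mathcal{R}(\eta'\circ\eta)=\bigoplus_\tau\eta'_\tau\eta_\tau=\mathcal{R}(\eta')\mathcal{R}(\eta)$. There is no real obstacle here. The only point needing care is the bookkeeping in the undefined-concatenation case: one must observe that the vanishing of $\rho_\sigma\cdot x$ is a purely combinatorial condition, the same in $M_F$ and $M_{F'}$.
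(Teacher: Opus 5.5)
Your proposal is correct and follows essentially the same route as the paper: $\Omega_S^\bullet$-linearity on basis elements $\rho_\sigma$ comes from the naturality square for $\tau\leq\sigma\star\tau$, compatibility with the coboundary is checked termwise by naturality, and identities and composition are verified blockwise. Your explicit handling of the undefined-concatenation case and of the degree-zero generators $\delta_v$ fills in details the paper leaves implicit.
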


\begin{proof}
By Lemma \ref{lemma: differential graded module associated to a cellular sheaf satisfies the Leibniz rule}, \(M_F\) is a DG-module over \(\Omega^\bullet_S\). For \(F,F',\eta\) as in \ref{excerpt: extending the assignment of cellular sheaves to differential graded algebras to morphisms}, \(\mathcal{R}(\eta)\) is $\Bbbk$-linear and degree \(0\) by definition. It remains to verify compatibility with the action and the differential. Let \(\tau\in S_q,\ \sigma \in S_p, \ x\in F(\tau)\) with \(\sigma\star\tau \in S_{p+q}\). Naturality of \(\eta\) gives 
\[\mathcal{R}(\eta)\left(\rho_\sigma\cdot x\right)
=\mathcal{R}(\eta)\left(F(\tau\rightarrow \sigma\star\tau)x\right)
=\eta_{\sigma\star \tau}F(\tau\rightarrow \sigma\star\tau)x
=F'(\tau\rightarrow \sigma\star\tau)\eta_\tau(x)
=\rho_\sigma\cdot \mathcal{R}(\eta)(x).\]
Similarly, \[ \begin{split}
\mathcal{R}(\eta)\big(d_{M_F}x\big)
&=\sum_{i=0}^{q+1} \sum_{\substack{\zeta \in S_{q+1}\\ d_i \zeta=\tau}}(-1)^i\eta_\zeta\bigl(F(\tau\rightarrow \zeta)x\bigr)
=\sum_{i=0}^{q+1} \sum_{\substack{\zeta \in S_{q+1}\\ d_i \zeta=\tau}}(-1)^i F'(\tau\rightarrow \zeta)\eta_\tau(x)
= d_{M_{F'}}\big(\mathcal{R}(\eta)x\big).
\end{split}
\] By construction, \(\mathcal{R}(\mathrm{id}_F)=\mathrm{id}_{M_F}\) holds stalkwise and for $\eta':F'\Rightarrow F''$, also
$\mathcal{R}(\eta'\circ \eta)\big\lvert _{F(\tau)}=(\eta'_\tau\circ \eta_\tau)=
\mathcal{R}(\eta')\big\lvert _{F'(\tau)}\circ \mathcal{R}(\eta)\big\lvert _{F(\tau)}$.
\end{proof}

To characterize the essential image of \(\mathcal{R}\), we introduce the following terminology.

\begin{definition}\label{definition: cellular module}
A module \(M\in {}_S\mathcal{M}^{\mathsf{DG}}\) is said to be \emph{cellular} if it satisfies the following additional conditions:
\begin{enumerate}[label=\textbf{(S\arabic*)},ref=S\arabic*,nosep,labelsep=.5em,align=left]
\item\label{ax:S1prime} For each $ 0 \leq q \leq n $, there is a decomposition \( M^q=\bigoplus_{\tau\in S_q} M(\tau)\), where each $M(\tau)$ is finite-dimensional over \(\Bbbk\), and, for $\tau= [v_0,\ldots,v_q],$ \( M(\tau)\subseteq M_{v_0}=M_{s(\tau)}.\) Moreover, \(M^q=0\) for \(q>n\). We write \[ \iota_\tau : M(\tau) \hookrightarrow M^q, \qquad
\pi_\tau : M^q\twoheadrightarrow M(\tau) \] for the natural inclusions and projections, respectively. 
\item\label{ax:S2} For each $\tau\in S_q$, $\zeta\in S_{q+1}$, let \(D_{\tau\rightarrow\zeta} \coloneq \pi_\zeta\circ d_M\circ \iota_\tau.\) Then
\[d_M\big\lvert _{M(\tau)}=\sum_{i=0}^{q+1} \sum_{\substack{\zeta\in S_{q+1}\\ d_i\zeta=\tau }}
D_{\tau\rightarrow \zeta}.\]
\end{enumerate}
\end{definition}

Conditions \textup{(S1)}--\textup{(S2)} are natural given \ref{excerpt: differential graded module associated to a sheaf}. The former refines the grading into simplex stalks such that \( M( \tau)\subseteq M_{s(\tau)}\) (cf. \ref{excerpt: general remarks on the module structure for a left Omega_S module}) while the latter restricts the support of the differential to codimension-one faces. What appears missing is any explicit compatibility between the \(\Omega^\bullet_S\)-action and \(d_M\). The next lemma shows that no such condition is needed. Under \textup{(S1)} and \textup{(S2)}, the graded Leibniz rule alone forces the edge action to coincide with the front-face component of the differential.

\begin{proposition}\label{proposition:edge-action-from-front-faces}
Let \(M\in {}_S\mathcal{M}^{\mathsf{DG}}\) satisfy \(\textup{(S1)}\) and \(\textup{(S2)}\). For \(e\in S_1\), \(\tau\in S_q\), if \(e\star\tau\) defined, then \[\pi_\zeta \circ ( \omega_e \cdot - ) \circ\iota_\tau =   
\begin{cases} 
 D_{\tau\to e\star\tau} & \text{if }\zeta = e\star\tau,\\
0, & \text{otherwise.}
\end{cases}\] 
If \( e \star \tau \) is not defined, then \(\omega_e\cdot M(\tau)=0\).
\end{proposition}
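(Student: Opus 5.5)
The plan is to derive the edge action from the graded Leibniz rule applied to degree-zero idempotents. Fix $e=[a,b]\in S_1$, so $a\neq b$. Recall from the construction of $\Omega_S^\bullet$ (and Lemma \ref{lemma: incidence basis}) that $\omega_e=\delta_a\, d\delta_b$. Fix $\tau=[v_0,\ldots,v_q]\in S_q$ and $x\in M(\tau)$. By \textup{(S1)} we have $x\in M_{v_0}$, hence $\delta_b x=\delta_{b,v_0}\,x$ by Remark \ref{remark: semisimple decomposition of A_s modules}.

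First I would apply the Leibniz rule for $d_M$ with $\alpha=\delta_b$ of degree $0$:
\[
d_M(\delta_b x)=(d\delta_b)\cdot x+\delta_b\, d_M x .
\]
Multiplying on the left by $\delta_a$ and using $\delta_a\delta_b=0$ gives
\[
\omega_e\cdot x=\delta_a\,(d\delta_b)\cdot x=\delta_a\, d_M(\delta_b x).
\]
This already handles the case $b\neq v_0$. Then $\delta_b x=0$, so $\omega_e\cdot x=0$. This is consistent with the statement, since $e\star\tau$ requires $t(e)=b=s(\tau)=v_0$ and so is undefined.

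Now assume $b=v_0$, so that $\omega_e\cdot x=\delta_a\, d_M x$. By \textup{(S2)},
\[
d_M x=\sum_{i=0}^{q+1}\ \sum_{\substack{\zeta\in S_{q+1}\\ d_i\zeta=\tau}} D_{\tau\to\zeta}\, x ,
\]
and each summand lies in $M(\zeta)\subseteq M_{s(\zeta)}$. Applying $\delta_a$ keeps exactly those summands with $s(\zeta)=a$ and acts as the identity on them.

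The main point is the combinatorial identification of which cofaces $\zeta$ survive.
\begin{itemize}
\item If $d_i\zeta=\tau$ with $i\ge1$, then $s(\zeta)=v_0=b\neq a$, so the term is killed.
\item If $i=0$, then $\zeta=[u,v_0,\ldots,v_q]$ with $s(\zeta)=u$. Survival forces $u=a$, that is, $\zeta$ has vertex word $(a,v_0,\ldots,v_q)$, which is the word of $e\star\tau$.
\end{itemize}
By vertex-injectivity (Definition \ref{definition:non-singular-complexes}) there is at most one such $\zeta$, and it exists precisely when $e\star\tau\in S_{q+1}$. The sign of the $i=0$ term is $(-1)^0=+1$. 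Hence $\omega_e\cdot x=D_{\tau\to e\star\tau}\,x$ when $e\star\tau$ is defined, and $\omega_e\cdot x=0$ otherwise.

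Finally, applying $\pi_\zeta$ yields the stated case distinction. The component is $D_{\tau\to e\star\tau}$ for $\zeta=e\star\tau$ and $0$ for every other $\zeta$, since the whole image lies in the single summand $M(e\star\tau)$. This also covers the degree bookkeeping, because $\omega_e\cdot M^q\subseteq M^{q+1}$.

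I expect the only delicate step to be the survival analysis in the $b=v_0$ case. It needs the clean separation of the $i=0$ coface from the $i\ge1$ cofaces through their source vertices, which uses $a\neq b$ and the inclusion $M(\zeta)\subseteq M_{s(\zeta)}$ from \textup{(S1)}. Everything else is a direct application of the Leibniz rule.
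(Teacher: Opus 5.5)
Your proof is correct and follows essentially the same route as the paper. Both arguments rest on the Leibniz rule for a degree-zero vertex idempotent, the coface expansion \textup{(S2)}, the support condition $M(\zeta)\subseteq M_{s(\zeta)}$ from \textup{(S1)}, and vertex-injectivity to single out the front coface $e\star\tau$. The difference is organizational: you localize at once via $\omega_e=\delta_a\,d\delta_b$ and left multiplication by $\delta_a$, whereas the paper expands $d\delta_{s(\tau)}\cdot x$ over all edges with target $s(\tau)$ and then projects onto $M(\zeta)$ to isolate $e$. Your remark about the sign $(-1)^0$ is superfluous, since \textup{(S2)} carries no signs; they are absorbed into the blocks $D_{\tau\to\zeta}$.
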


\begin{proof}
Let \(v:=s(\tau)\) and \(x\in M(\tau)\). Condition \(\textup{(S1)}\) gives \( x = \delta_v x\). Since \(\delta_v\) has degree \(0\), the Leibniz rule gives \(d_M x = d_M (\delta_v x) = d \delta_v\cdot x+\delta_v d_M x,\) hence
\[ d_Mx-\delta_vd_Mx=d\delta_v\cdot x. \tag{1} \] By \(\textup{(S2)}\),
\(d_Mx = \sum_{i=0}^{q+1} \sum_{\substack{\zeta\in S_{q+1}, \ d_i\zeta=\tau}}
D_{\tau\to\zeta}x. \) If \(i\geq1\), then \(s(\zeta)=s(\tau)=v\), so \(D_{\tau\to\zeta}x\in M(\zeta)\subseteq M_v\) and \(\delta_v\) fixes this summand. If \(i=0\), then \( \zeta = e\star\tau\) for an edge \(e\) satisfying \(d_0e=v\). The first vertex of \(\zeta\) is \(d_1e\neq  v\), by vertex-distinctness. Thus \(\delta_v\) annihilates this summand. Therefore, \[ d_Mx-\delta_vd_Mx = \sum_{\substack{\zeta\in S_{q+1}\\ d_0\zeta=\tau}}
D_{\tau\to\zeta}x. \tag{2} \]
On the other hand, \( d \delta_v = \sum_{e\in S_1,\ d_0e=v}\omega_e - \sum_{e\in S_1, \ d_1 e=v}\omega_e.\) Since \(\omega_e = \omega_e\delta_{d_0e}\) and \(x=\delta_v x\), we have \(\omega_e\cdot x=0\) unless \(d_0e=v\). In particular, every term in the second sum vanishes on \(x\) by vertex distinctness. Thus \[ d\delta_v\cdot x = \sum_{\substack{e\in S_1\\ d_0e=v}} \omega_e\cdot x. \tag{3} \]
Combining \( (1) \), \( (2) \), and \( (3) \), we obtain
\( \sum_{\substack{ \zeta \in S_{q+1},\ d_0\zeta = \tau}} D_{ \tau \to \zeta} x = \sum_{\substack{e\in S_1, \ d_0e=v}} \omega_e \cdot x \) and projecting this to \( M( \zeta ) \) gives \[ \sum_{\substack{e\in S_1\\ d_0e=v}} \pi_\zeta(\omega_e\cdot x)=
\begin{cases} D_{\tau\to\zeta}x,
& d_0\zeta=\tau,\\
0, & d_0\zeta\neq\tau.
\end{cases} \tag{4} \]

Now fix \(e\in S_1\). If \(d_0e\neq v\), then \(\omega_e\cdot x=0\), since \( \omega_e = \omega_e \delta_{d_0e}\) and \( x = \delta_v x\). Thus, we may assume that \(d_0 e = v\), and put \( u := d_1 e\).

Since \( \omega_e=\delta_u \omega_e \delta_v,\) the element \( \omega_e \cdot x\) lies in \( M_u\). Hence \( \pi_\zeta(\omega_e\cdot x)=0 \) unless \( s(\zeta) = u.\) If \(s(\zeta) = u\), then \( e \) is the unique edge \( e'\in S_1 \) such that \( d_0 e' = v, \ d_1 e' = s (\zeta).\) Consequently, the sum on the left-hand side of \((4)\) reduces to the single term \(\pi_\zeta(\omega_e\cdot x)\), and we have
\[ \pi_\zeta(\omega_e\cdot x) = \begin{cases}
D_{\tau\to\zeta}x,
& d_0\zeta=\tau,\\
0,
& d_0\zeta\neq\tau.
\end{cases}
\] Under the conditions \( d_0 e = s(\tau)\) and \( s (\zeta) = d_1e\), vertex-injectivity gives \( d_0 \zeta = \tau \) if and only if \(\zeta = e \star \tau. \) Therefore \[ \pi_\zeta( \omega_e \cdot x) = \begin{cases}
D_{\tau \to e \star \tau} x,
& \zeta=e\star\tau, \\
0, & \text{otherwise.}
\end{cases}\]
If \(e\star\tau\) is not defined, \(\omega_e\cdot x=0\).
\end{proof}

To preserve the simplexwise data in \textup{(S1)}, we restrict to DG-maps that are compatible with the summands.

\begin{definition}\label{def:cellularmorphisms} A morphism \(\phi: M\rightarrow N\) in \({}_S\mathcal{M}^{\mathsf{DG}}\) is said to be \emph{cellular} if $\phi(M(\tau))\subseteq N(\tau)$ for all $\tau\in S$. We denote by $\prescript{\mathsf{c}}{S}{\mathcal{M}}^{\mathsf{DG}}
$ the subcategory of ${}_S\mathcal{M}^{\mathsf{DG}}$ given by cellular differential graded modules with cellular morphisms between them.
\end{definition}

\begin{excerpt}\label{excerpt: first introduction of the sheaf associated to a cellular module}
The DG-module \(M_F\) constructed from a cellular sheaf \(F\) is cellular. Indeed, \(M_F^q = \bigoplus_{\tau \in S_q} F(\tau) \) by construction, so the simplexwise decomposition in \(\textup{(S1)}\) is satisfied with \(M_F(\tau)=F(\tau)\). If \(\tau=[v_0,\ldots,v_q]\), then the \(A_S\)-action on \(F(\tau)\) is through \(\delta_{v_0}\), so \(M_F(\tau)\subseteq (M_F)_{v_0}. \) For \(\textup{(S2)}\), if \(d_i\zeta=\tau\), the \((\tau,\zeta)\)-block of \(d_{M_F}\) is \(D_{\tau\to\zeta}=(-1)^iF(\tau\to\zeta).\) Hence \(d_{M_F}\) is supported on codimension-one cofaces. Conversely, given \(M \in  \prescript{\mathsf{c}}{S}{\mathcal{M}}^{\mathsf{DG}}\), one can define a cellular sheaf \(F_M\) by putting \[ F_M(\tau)  \coloneq  M(\tau), \quad F_M ( \tau \rightarrow \zeta) \coloneq (-1)^i D_{\tau\rightarrow \zeta}, \quad \text{for each } \zeta \text{ with }d_i\zeta=\tau, \ \text{for some }i.\] For a general inclusion \(\tau \leq \xi\), set \(F_M(\tau\rightarrow \xi)\) to be the signed composition of the $D_{\bullet\rightarrow\bullet}$ along a maximal chain from $\tau$ to $\xi$ in the Hasse diagram (cf. \ref{definition: face poset, hasse diagram associated to a simplicial set}). The fact that \(d_M^2=0\) guarantees that this is well-defined, as the following lemma shows.
\end{excerpt}

\begin{lemma}\label{lemma: sheaf associated to a cellular differential graded module is functorial and well-defined}
For $M\in \prescript{\mathsf{c}}{S}{\mathcal{M}}^{\mathsf{DG}}$, \ \(F_M: P_S\rightarrow\mathrm{Vect}_\Bbbk\) defines a cellular sheaf on $S$. Additionally, the assignment $M\to F_M$ defines a functor $\mathcal{S}: \prescript{\mathsf{c}}{S}{\mathcal{M}}^{\mathsf{DG}}\rightarrow \mathsf{Sh}(S)$.
\end{lemma}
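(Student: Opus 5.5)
The plan is to check well-definedness of \(F_M\) on arbitrary inclusions \(\tau\leq\xi\) first, then functoriality, then the behaviour on morphisms. Write the signed block maps \(R_{\tau\to\zeta}:=(-1)^iD_{\tau\to\zeta}\) for \(d_i\zeta=\tau\); by vertex-injectivity and distinctness of vertices, the index \(i\) is uniquely determined by the pair \((\tau,\zeta)\), so \(R\) is well defined on Hasse edges. For \(\tau\leq\xi\) with \(\dim\xi-\dim\tau=k\), a maximal chain is a sequence of codimension-one cofaces, i.e. an ordering of the insertion of the \(k\) missing vertices. The first step is the codimension-two case. For \(\tau\in S_q\) and \(\xi\in S_{q+2}\) with \(\tau\leq\xi\), there are exactly two intermediate simplices \(\zeta,\zeta'\) (insert one of the two missing vertices first), both lying in \(S\) by face-closure. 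Apply \(\pi_\xi\circ d_M^2\circ\iota_\tau=0\) together with (S2): the only contributions are \(D_{\zeta\to\xi}D_{\tau\to\zeta}+D_{\zeta'\to\xi}D_{\tau\to\zeta'}=0\). The semisimplicial identity \(d_id_j=d_{j-1}d_i\) for \(i<j\) shows that the two face-index pairs along the two paths have sign products of opposite parity; this is the same sign count used in Lemma \ref{lemma: differential graded module associated to a cellular sheaf satisfies the Leibniz rule}. Hence \(R_{\zeta\to\xi}R_{\tau\to\zeta}=R_{\zeta'\to\xi}R_{\tau\to\zeta'}\).

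Next, I would pass from squares to arbitrary intervals. Maximal chains from \(\tau\) to \(\xi\) correspond to the orderings (permutations) of the \(k\) inserted vertices. Any two permutations are connected by adjacent transpositions, and each adjacent transposition changes the chain exactly on a codimension-two subinterval, where the square identity applies. The composite is therefore independent of the chosen chain, so \(F_M(\tau\to\xi)\) is well defined. Set \(F_M(\tau\to\tau)=\mathrm{id}\). Functoriality \(F_M(\xi\to\eta)F_M(\tau\to\xi)=F_M(\tau\to\eta)\) then follows because concatenating maximal chains gives a maximal chain. Finite-dimensionality of the stalks is part of (S1). I expect this step, the square identity with correct signs plus the connectivity of maximal chains, to be the main obstacle. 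Note that if \(d_M\) had nonzero components to non-cofaces, the square computation would fail, so (S2) is used essentially here.

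For morphisms, let \(\phi\colon M\to N\) be cellular and define \(\mathcal{S}(\phi)_\tau:=\pi_\tau\phi\iota_\tau\colon M(\tau)\to N(\tau)\), which is just the restriction \(\phi|_{M(\tau)}\) by cellularity. Since \(\phi\) commutes with the differentials and preserves summands, projecting \(\phi d_M\iota_\tau=d_N\phi\iota_\tau\) to \(N(\zeta)\) gives \(\phi_\zeta D^M_{\tau\to\zeta}=D^N_{\tau\to\zeta}\phi_\tau\). The identical sign \((-1)^i\) appears on both sides, so naturality holds on Hasse edges, and it extends to all inclusions by composing along a chain. Preservation of identities and composition is immediate, since \(\mathcal{S}(\phi)\) is blockwise restriction. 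Notice that \(\Omega_S^\bullet\)-linearity of \(\phi\) is not needed here, and by Proposition \ref{proposition:edge-action-from-front-faces} it is in any case automatic relative to the \(D\)'s.
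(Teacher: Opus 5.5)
Your proposal is correct and follows the paper's proof essentially step for step. The steps match: the codimension-two square identity comes from the $(\tau,\xi)$-block of $d_M^2=0$, with signs tracked via $d_id_j=d_{j-1}d_i$; chain-independence follows from adjacent interchanges of face insertions; and naturality of $\phi|_{M(\tau)}$ on Hasse edges extends along chains. Your additional remarks make explicit some details the paper leaves implicit, namely uniqueness of the face index $i$, the identification of maximal chains with orderings of the inserted vertices, and $F_M(\tau\to\tau)=\mathrm{id}$.
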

\begin{proof}
Let \(\xi\in S_{q+2}\) and let $\tau\in S_q$ be a codimension-$2$ face of $\xi$. There is a unique pair $i<j$ with $\tau=d_i d_j\xi=d_{j-1} d_i\xi$, so exactly two codimension-\(1\) intermediates occur. Denote these by \(\eta  \coloneq  d_j\xi  \ \text{and}\ \eta'  \coloneq  d_i\xi\). By (S2), the $(\tau,\xi)$-block of $d_M^2$ is precisely
\(D_{\eta\rightarrow \xi}\circ D_{\tau\rightarrow \eta} + D_{\eta'\rightarrow \xi}\circ D_{\tau\rightarrow \eta'}.\)
Since $d_M^2=0$, this sum vanishes, so 
\begin{equation*}
\begin{split}
F_M(\eta\rightarrow \xi)\circ F_M(\tau\rightarrow \eta) &= (-1)^{i+j}D_{\eta\rightarrow \xi}\circ D_{\tau\rightarrow \eta}
= (-1)^{i+j-1}D_{\eta'\rightarrow \xi}\circ D_{\tau\rightarrow \eta'}
\\
& = F_M(\eta'\rightarrow \xi)\circ F_M(\tau\rightarrow \eta').
\end{split}
\end{equation*}

For a general inclusion \(\tau\leq\xi\), any two maximal chains from \(\tau\) to \(\xi\) are related by a sequence of interchanges of two successive face insertions. Each such interchange replaces one length-\(2\) segment by the other side of a codimension-\(2\) square. The preceding computation therefore shows that the associated signed composites agree. Hence \(F_M(\tau\to\xi)\) is independent of the chosen chain, and the resulting maps are compatible with composition.

It remains to define \(\mathcal S\) on morphisms. Let \(\phi\colon M\to N\) be a cellular DG-map, and set \[ \mathcal S(\phi)_\tau := \phi\big|_{M(\tau)}
\colon M(\tau)\rightarrow N(\tau). \] For \(d_i\zeta=\tau\), cellularity of \(\phi\) and the identity \(\phi d_M=d_N\phi\) give \(\phi_\zeta D^M_{\tau\to\zeta} = D^N_{\tau\to\zeta}\phi_\tau.\) Multiplying by \((-1)^i\) yields \( \phi_\zeta F_M(\tau\to\zeta) = F_N(\tau\to\zeta)\phi_\tau.\) Thus \((\phi_\tau)_\tau\) is natural on codimension-one inclusions, and hence on every inclusion in \(P_S\). Identities and compositions are preserved stalkwise, so \(M\to F_M\) defines a functor.
\end{proof}

\begin{theorem}\label{theorem: equivalence between cellular sheaves and differential graded module}
Let $S\in \ssreg_{\leq n}$. Then the pair \((\mathcal{R}, \mathcal{S})\) defines an equivalence of categories \(\mathsf{Sh}(S) \simeq\prescript{\mathsf{c}}{S}{\mathcal{M}}^{\mathsf{DG}}.\)
\end{theorem}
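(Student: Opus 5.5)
I plan to show directly that \(\mathcal S\circ\mathcal R\) is the identity on \(\mathsf{Sh}(S)\) and that \(\mathcal R\circ\mathcal S\) is naturally isomorphic (in fact equal) to the identity on \(\prescript{\mathsf{c}}{S}{\mathcal{M}}^{\mathsf{DG}}\). Before that, I need to check that \(\mathcal R\) really lands in the cellular subcategory. On objects this is \ref{excerpt: first introduction of the sheaf associated to a cellular module}. On morphisms, \(\mathcal R(\eta)=\bigoplus_\tau \eta_\tau\) maps \(F(\tau)\) into \(F'(\tau)\), so it is cellular. By Lemma \ref{lemma: sheaf associated to a cellular differential graded module is functorial and well-defined}, \(\mathcal S\) is a functor.

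First I compute \(\mathcal S\mathcal R(F)\). Its stalks are \(M_F(\tau)=F(\tau)\). For a codimension-one face \(d_i\zeta=\tau\), the excerpt gives \(D_{\tau\to\zeta}=(-1)^iF(\tau\to\zeta)\), so \(F_{M_F}(\tau\to\zeta)=(-1)^i(-1)^iF(\tau\to\zeta)=F(\tau\to\zeta)\). For a general inclusion \(\tau\le\xi\), the map is by definition the composite along a maximal Hasse chain of the codimension-one maps, with the signs absorbed in exactly this way, so it equals \(F(\tau\to\xi)\) by functoriality of \(F\). On morphisms, \(\mathcal S\mathcal R(\eta)_\tau=\eta_\tau\). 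Hence \(\mathcal S\mathcal R=\mathrm{id}\).

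Next I compute \(\mathcal R\mathcal S(M)=M_{F_M}\). As graded vector spaces, \(M_{F_M}^q=\bigoplus_\tau M(\tau)=M^q\) by (S1), so I take the identity map \(\mathrm{id}_M\) as the candidate isomorphism and verify that all the structures agree.
\begin{itemize}[nosep]
\item \emph{The \(A_S\)-actions.} In both modules \(\delta_v\) acts on \(M(\tau)\) by \(\delta_{v,s(\tau)}\). This uses \(M(\tau)\subseteq M_{s(\tau)}\) from (S1).
\item \emph{The differentials.} By (S2), \(d_M|_{M(\tau)}=\sum_{i}\sum_{d_i\zeta=\tau}D_{\tau\to\zeta}=\sum(-1)^iF_M(\tau\to\zeta)\), which is exactly \(d_{M_{F_M}}\).
\item \emph{The \(\Omega^\bullet_S\)-actions.} By \ref{excerpt: general remarks on the module structure for a left Omega_S module}, a left \(\Omega_S^\bullet\)-action is determined by the edge actions \(\omega_e\cdot-\) together with the \(A_S\)-action. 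Proposition \ref{proposition:edge-action-from-front-faces} says that in \(M\) the edge action on \(M(\tau)\) is \(D_{\tau\to e\star\tau}\) when \(e\star\tau\) is defined and zero otherwise. In \(M_{F_M}\), it is \(F_M(\tau\to e\star\tau)=(-1)^0D_{\tau\to e\star\tau}\), since \(\tau=d_0(e\star\tau)\). The two therefore coincide, and by \eqref{equation: composability of action as composition of edge actions} so do all higher \(\rho_\sigma\)-actions.
\end{itemize}
Thus \(\mathcal R\mathcal S(M)=M\) as cellular DG-modules. On a morphism \(\phi\), \(\mathcal R\mathcal S(\phi)=\bigoplus_\tau\phi|_{M(\tau)}=\phi\), using cellularity and (S1). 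So \(\mathcal R\mathcal S=\mathrm{id}\), and the two functors are mutually inverse, giving an isomorphism and in particular an equivalence of categories.

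The main obstacle is the comparison of the \(\Omega_S^\bullet\)-actions. A priori the action on a cellular module is extra data not recorded in \(F_M\), and the key input is Proposition \ref{proposition:edge-action-from-front-faces}, which shows that the action is forced by the front-face blocks of \(d_M\). A secondary point is the sign bookkeeping for general inclusions in \(\mathcal S\mathcal R\), where I must ensure that the signed chain composite in \ref{excerpt: first introduction of the sheaf associated to a cellular module} is normalised so that \(F_{M_F}(\tau\to\xi)=F(\tau\to\xi)\). I will do this codimension by codimension, using the well-definedness established in Lemma \ref{lemma: sheaf associated to a cellular differential graded module is functorial and well-defined}.
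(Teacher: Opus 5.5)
Your proposal is correct and is essentially the paper's argument: the paper's check that the unit $\eta_F$ and counit $\epsilon_M$ are block identities uses exactly your ingredients. These are the sign cancellation $(-1)^i(-1)^i=1$ on codimension-one blocks, conditions \textup{(S1)}--\textup{(S2)} for the differential, and Proposition~\ref{proposition:edge-action-from-front-faces} to match the edge actions and hence all $\rho_\sigma$-actions. The only difference is packaging: the paper first sets up the adjunction $\mathcal R\dashv\mathcal S$ through the hom-set bijection $\Theta,\Xi$, whereas you verify directly that $\mathcal S\mathcal R$ and $\mathcal R\mathcal S$ are identities, which already gives the equivalence (in fact an isomorphism of categories) and makes the adjunction step unnecessary.
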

\begin{proof} We show that \(\mathcal R\dashv\mathcal S\) and that its unit and counit are isomorphisms. Define
\[\Theta:\mathrm{Hom}_{\prescript{\mathsf{c}}{S}{\mathcal{M}}^{\mathsf{DG}}}\big(\mathcal{R}(F),M\big)
 \rightarrow{}
\mathrm{Nat}\big(F,\mathcal{S}(M)\big), \quad
\Theta(\Phi)_\tau \coloneq \Phi\lvert_{F(\tau)}: F(\tau) \rightarrow M(\tau),
\]
and
\[ \Xi:\mathrm{Nat}\big(F,\mathcal{S}(M)\big) \rightarrow\ \mathrm{Hom}_{\prescript{\mathsf{c}}{S}{\mathcal{M}}^{\mathsf{DG}}}\big(\mathcal{R}(F),M\big),\quad 
\Xi(\nu)\lvert_{F(\tau)} \coloneq \nu_\tau: F(\tau) \rightarrow M(\tau), \]
extended linearly for \(M_F= \oplus_{\tau} F(\tau)\). Then $\Xi(\Theta(\Phi))=\Phi$ blockwise and $\Theta(\Xi(\nu))=\nu$ stalkwise.

For \(\tau, \zeta\) with $d_i\zeta=\tau$ for some \(i\), projecting \(\Phi\circ d_{M_F}=d_M\circ \Phi\) to the $(\tau,\zeta)$-block gives \[
\Theta(\Phi)_\zeta\circ ((-1)^iF(\tau\rightarrow\zeta))
 = 
D^M_{\tau\rightarrow\zeta}\circ \Theta(\Phi)_\tau.
\]
Since $F_M(\tau\rightarrow\zeta)=(-1)^iD^M_{\tau\rightarrow\zeta}$, also $F_M(\tau\rightarrow\zeta)\circ \Theta(\Phi)_\tau=\Theta(\Phi)_\zeta\circ F(\tau\rightarrow\zeta)$.
One can again extend to all $\tau\leq\xi$, so $\Theta(\Phi)$ is a natural transformation \(F\rightarrow F_M.\) 

To see that \(\Xi(\nu)\) is $\Omega_S^\bullet$-linear, we first show \(A_S\)-linearity. Since $\Xi(\nu)$ is \(\Bbbk\)-linear and preserves the simplex decompositions, for $f\in A_{S}, \tau=[u_0,\ldots, u_q]\in S_q, \ x\in F(\tau)$, \[\Xi(\nu)(f \cdot x)= \Xi(\nu)(f(u_0)x)=f(u_0)\Xi(\nu)(x)= f \cdot \Xi(\nu)(x).\]
Likewise, for \(e\in S_1\), \(x\in F(\tau)\), assume first that \(\zeta=e \star \tau\) is defined. Then \[ \Xi(\nu) (\omega_e \cdot x) = \Xi(\nu)(F(\tau\to\zeta)x) = \nu_\zeta F(\tau\to\zeta)x = F_M(\tau\to\zeta)\nu_\tau(x). \]
Since \(\zeta=e\star\tau\) is the front coface of \(\tau\), one has
\( F_M ( \tau \to \zeta)=D^M_{\tau \to \zeta}. \) By Proposition \ref{proposition:edge-action-from-front-faces}, \( D^M_{\tau\to\zeta}\nu_\tau(x)=\omega_e\cdot\nu_\tau(x),\) so \( \Xi(\nu)(\omega_e\cdot x) = \omega_e\cdot\Xi(\nu)(x).\) If \(e\star\tau\) is not defined, both sides vanish. Indeed, on \(\mathcal{R}(F)\) this is by construction, and on \(M\) it follows from Proposition \ref{proposition:edge-action-from-front-faces}. For any higher degree $\sigma=[v_0,\ldots,v_p]\in S_p$, we have $ \rho_\sigma = \rho_{e_1} \wedge \ldots \wedge \rho_{e_p}$, where $e_k = [v_{k-1},v_k]$. By associativity of the left action, \[ \Xi(\nu)(\rho_\sigma\cdot x)
=\Xi(\nu)\big(\rho_{e_1}\cdot(\rho_{e_2}\cdot(\ldots(\rho_{e_p}\cdot x)\ldots ))\big)
=\rho_{e_1}\cdot\big(\rho_{e_2}\cdot(\ldots(\rho_{e_p}\cdot \Xi(\nu)(x))\ldots)\big)
=\rho_\sigma\cdot \Xi(\nu)(x).\]
Hence $\Xi(\nu)$ is $\Omega_S^\bullet$-linear.

For compatibility with the differential, observe that
\[ \Xi(\nu)\big(d_{M_F}x\big)
=\sum_{i,\zeta}(-1)^i \nu_\zeta F(\tau\rightarrow\zeta)x
=\sum_{i,\zeta}(-1)^i F_M(\tau\rightarrow\zeta)\nu_\tau(x)
= d_M\big(\Xi(\nu) x\big), 
\]
where the last equality follows from (S2). Thus $\Xi(\nu)$ is a DG-map. It is moreover cellular, since $\Xi(\nu)(M_F(\tau)) = \nu_\tau(F(\tau)) \subseteq M(\tau)$ by construction.

Naturality of \(\Theta\) and \(\Xi\) follow by routine diagram chases. It remains to show that the unit and counit are natural isomorphisms. Since, for $M\in \prescript{\mathsf{c}}{S}{\mathcal{M}}^{\mathsf{DG}}$,
\[ \mathcal{R}(\mathcal{S}(M))^q = \bigoplus_{\tau\in S_q} F_M(\tau) = \bigoplus_{\tau\in S_q} M(\tau)=M^q,\]
$\epsilon_M$ is simply the block identity. It commutes with the differentials, because for \(d_i\zeta=\tau\), \[ D^{\mathcal{R}(\mathcal{S}(M))}_{\tau\to\zeta} = (-1)^iF_M(\tau\to\zeta) = (-1)^i(-1)^iD^M_{\tau\to\zeta} = D^M_{\tau\to\zeta}.\] It also commutes with the \(\Omega_S^\bullet\)-action. It is enough to check edge generators. For \(e\in S_1\) and \(\tau\) with \(e\star\tau\) defined,
\[ (\omega_e)^{\mathcal R(\mathcal S(M))}|_{\tau\to e\star\tau} = F_M(\tau\to e\star\tau) = D^M_{\tau\to e\star\tau} = (\omega_e)^M|_{\tau\to e\star\tau}, \]
where the last equality follows from Proposition \ref{proposition:edge-action-from-front-faces}. If \(e\star\tau\) is not defined, both edge actions vanish by the same proposition. Since both left \(\Omega_S^\bullet\)-module structures are determined by their edge operators, the full \(\Omega_S^\bullet\)-actions agree. Hence \( \epsilon_M \) is a DG-isomorphism, natural in \(M\). 

Lastly, for \(F\in \mathsf{Sh}(S)\), define \(\eta_F:F\to \mathcal{S}(\mathcal{R}(F))\) by
\((\eta_F)_\tau \coloneq \mathrm{id}_{F(\tau)}.\) Whenever \(d_i\zeta = \tau,\)
\[ \mathcal{S}(\mathcal{R}(F))(\tau\rightarrow \zeta) = (-1)^iD^{M_F}_{\tau\rightarrow \zeta} = (-1)^i \pi_\zeta d_{M_F}\iota_\tau=(-1)^i(-1)^i F(\tau\rightarrow \zeta)=F(\tau\rightarrow \zeta), \] so $\eta_F : F\rightarrow \mathcal{S}(\mathcal{R}(F))$ is a natural isomorphism. Since both \( F \) and \(\mathcal{S}(\mathcal{R}(F))\) are functorial on \(P_S\), agreement on codimension-one inclusions extends to all \(\tau\leq\xi\).
\end{proof}

\begin{excerpt}\label{excerpt: relation with the incidence algebra}
Let \( \Bbbk[P_S] \) denote the incidence algebra of the face poset \(P_S\) of
\(S\in \ssreg_{\leq n}\). By Remark \ref{remark: relation between AW DGA and the incidence algebra}, the map \[ \Psi:(\Omega_S^\bullet,\wedge)\to \Bbbk[P_S],\quad \Psi(\rho_\sigma)=T_\sigma,\] is an injective anti-algebra morphism with image \(C\subseteq \Bbbk[P_S]\). There is a standard equivalence between \(\Bbbk\)-linear cellular sheaves on a finite poset
and finite-dimensional right modules over its incidence algebra (see, for instance,
\cite[Lemma 2.7]{Ladkani2008DerivedEquivalencesSheavesFinitePosets}): \[ E: \mathsf{Sh}(S) \to \mathcal{M}_{\Bbbk[P_S]},\quad
F\to E(F) \coloneq \bigoplus_{\tau\in P_S}F(\tau),\] where the right action on $E(F)$ is defined as \( m\cdot e_{\tau,\xi} \coloneq \iota_\xi \bigl(F(\tau\leq \xi)\pi_\tau(m)\bigr),\ ( \tau\leq \xi)\), and for a morphism \(\eta:F\to F'\), we define \( E(\eta) \coloneq \bigoplus_{\tau\in P_S}\eta_\tau.\) Write \( \operatorname{res}_{j}: \mathcal{M}_{\Bbbk[P_S]} \rightarrow \mathcal{M}_C \) for the restriction of scalars along \(j: C \hookrightarrow \Bbbk[P_S]\) and let \(U :{}_S\mathcal{M}^{\mathsf{DG}}\rightarrow \mathcal {M}_C\) be the forgetful functor omitting the grading and differential, with right
\(C\)-action given by \(m\cdot \Psi(\rho_\sigma)\coloneq\rho_\sigma\cdot m.\)
\end{excerpt}

\begin{proposition}
There is a natural isomorphism of functors
\(U\circ \mathcal R \simeq  \operatorname{res}_{j}\circ E.\)
\end{proposition}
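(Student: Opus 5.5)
The natural isomorphism will be the identity on underlying vector spaces. For \(F\in\mathsf{Sh}(S)\), both \(U(\mathcal R(F))\) and \(\operatorname{res}_j(E(F))\) have underlying space \(\bigoplus_{\tau\in P_S}F(\tau)\), because \(M_F=\bigoplus_q\bigoplus_{\tau\in S_q}F(\tau)\) and \(P_S=\bigsqcup_q S_q\). So I set \((\theta_F)\) equal to the block identity. Two things then need checking: that \(\theta_F\) is right \(C\)-linear, and that it is natural in \(F\).

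\textbf{\(C\)-linearity.} Since \(C\) is spanned by the \(T_\sigma=\Psi(\rho_\sigma)\), it suffices to compare the actions of \(T_\sigma\) on \(x\in F(\tau)\), for \(\tau\in S_q\) and \(\sigma\in S_p\).

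\emph{Action in \(\operatorname{res}_j E(F)\).} By definition,
\[
x\cdot T_\sigma=\sum_{\tau'\,:\,\sigma\star\tau'\text{ defined}} x\cdot e_{\tau',\sigma\star\tau'} .
\]
The rule \(m\cdot e_{\tau',\xi}=\iota_\xi F(\tau'\le\xi)\pi_{\tau'}(m)\) kills \(x\) unless \(\tau'=\tau\). Hence \(x\cdot T_\sigma=F(\tau\to\sigma\star\tau)x\) if \(\sigma\star\tau\) is defined, and \(0\) otherwise.

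\emph{Action in \(U\mathcal R(F)\).} Here \(x\cdot\Psi(\rho_\sigma)=\rho_\sigma\cdot x\), which by the construction of \(M_F\) is exactly the same expression.

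So the two actions agree on the spanning set \(\{T_\sigma\}\), including the unit, which corresponds to the vertex terms \(\sigma\in S_0\) with \(\rho_{[v]}=\delta_v\).

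I should check that the convention \(m\cdot\Psi(\rho_\sigma):=\rho_\sigma\cdot m\) really defines a right \(C\)-module. This holds because \(\Psi\) is an anti-algebra morphism (Remark~\ref{remark: relation between AW DGA and the incidence algebra}):
\[
(m\cdot T_\sigma)\cdot T_\tau=\rho_\tau\cdot(\rho_\sigma\cdot m)=(\rho_\tau\wedge\rho_\sigma)\cdot m=m\cdot\Psi(\rho_\tau\wedge\rho_\sigma)=m\cdot(T_\sigma T_\tau).
\]
The only real obstacle is bookkeeping. One must confirm that the index convention for \(T_\sigma\), namely \(e_{\tau,\sigma\star\tau}\), matches the front-concatenation rule \(\rho_\sigma\cdot x\in F(\sigma\star\tau)\) used in \(M_F\). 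One must also confirm that the incidence-algebra action \(m\cdot e_{\tau,\xi}\) carries \(F(\tau)\) to \(F(\xi)\), and not the reverse. Both conventions are as stated in the paper, so the blocks match.

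\textbf{Naturality.} For \(\eta\colon F\to F'\), both \(U\mathcal R(\eta)\) and \(\operatorname{res}_jE(\eta)\) are the block map \(\bigoplus_\tau\eta_\tau\). So the naturality square with identity components commutes trivially. Each \(\theta_F\) is a \(C\)-linear bijection, hence a natural isomorphism \(U\circ\mathcal R\simeq\operatorname{res}_j\circ E\).
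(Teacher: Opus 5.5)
Your proposal is correct and follows essentially the same route as the paper. Both take the block identity on $\bigoplus_{\tau\in P_S}F(\tau)$ and compare the two actions of each $T_\sigma$ on a stalk $F(\tau)$, where only the summand $e_{\tau,\sigma\star\tau}$ survives; naturality holds because both functors send $\eta$ to $\bigoplus_\tau\eta_\tau$. Your extra check that $m\cdot\Psi(\rho_\sigma):=\rho_\sigma\cdot m$ is a genuine right $C$-action, via the anti-algebra property of $\Psi$, is a point the paper leaves implicit.
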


\begin{proof}
For each \(F\in \mathsf{Sh}(S)\), both \(U(M_F)\) and \(\operatorname{res}_{j}(E(F))\) share the same underlying vector space \(\bigoplus_{\tau\in P_S}F(\tau).\)
Define \(\vartheta_F: U(M_F) \rightarrow \operatorname{res}_{j}(E(F)) \)
to be the identity on this direct sum. It remains to check \(C\)-linearity.

Let \(\sigma\in P_S\), and \(x\in F(\tau)\subseteq \bigoplus_{\eta\in P_S}F(\eta)\) for some \(\tau\in P_S\). In \(U(M_F)\), the action of \(T_\sigma=\Psi(\rho_\sigma)\) is
\[  x \cdot T_\sigma =\rho_\sigma\cdot x=
\begin{cases}
F(\tau\leq \sigma\star\tau) x, & \sigma\star\tau \text{ defined},\\
0, & \text{otherwise}.
\end{cases}
\]

In \( \operatorname{res}_{j}(E(F))\),
\( T_\sigma = \sum_{\substack{\eta\in P_S,\\  \sigma\star\eta\ \text{defined}}} e_{\eta,\sigma\star\eta}, \)
and only the term \(\eta=\tau\) acts nontrivially on \(x\in F(\tau)\). Hence
\[ x \cdot T_\sigma = \begin{cases}  x \cdot e_{\tau,\sigma\star\tau}
=F(\tau\leq \sigma\star\tau) x, & \sigma\star\tau \text{ defined},\\
0, & \text{otherwise}. \end{cases} \]

Thus \(\vartheta_F\) is \(C\)-linear. For a morphism \(\eta:F\to F'\) of sheaves, both \(U(\mathcal R(\eta))\) and \(E(\eta)\) are the block map \(\bigoplus_{\tau}\eta_\tau\), so \(\{\vartheta_F\}_F\) is natural.
\end{proof}

\begin{excerpt}\label{excerpt: right module version of sheaf module correspondence}
The correspondence between left DG-modules and cellular sheaves established in Theorem \ref{theorem: equivalence between cellular sheaves and differential graded module} has a natural right-handed version, obtained by postpending simplices rather than prepending them. We sketch the analogues of the left definitions and leave details to the reader.

For a right \(A_S\)-module \(M\), write \(M_v:=M\delta_v.\) Then \(M\otimes_{A_S}\Omega_S^p \cong \bigoplus_{\sigma\in S_p} M_{{s(\sigma)}}\) decomposes along source vertices, and a right \(\Omega_S^\bullet\)-module structure is completely determined by edge operators \( \{ \mu_e^{\mathrm r} \colon M_{d_1(e)}\to M_{d_0(e)}\}_{e\in S_1}\). Given \(F\in\mathsf{Sh}(S)\), the graded vector space
\(M_F^{\mathrm r}\coloneqq \bigoplus_{q\ge 0}\bigoplus_{\tau\in S_q}F(\tau)\)
carries the right action
\[
x\cdot\rho_\sigma \coloneqq
\begin{cases}
F(\tau\to \tau\star\sigma)x, & \tau\star\sigma\in S_{p+q},\\
0, & \text{otherwise},
\end{cases} \quad \tau\in S_q,\ x\in F(\tau),\ \rho_\sigma\in\Omega_S^p, \]
together with the same signed coboundary differential as in
\ref{excerpt: differential graded module associated to a sheaf}.
As in Lemma
\ref{lemma: differential graded module associated to a cellular sheaf satisfies the Leibniz rule}, one can show that this defines a right DG-module over \( \Omega_S^\bullet \).
\end{excerpt}

\begin{definition}\label{def:right-cellular-module}
A right DG-module \(M\in\mathcal{M}_S^{\mathrm{DG}}\) is \emph{cellular} if it satisfies \textbf{(S1)} and \textbf{(S2)}
of Definition \ref{definition: cellular module}, with the simplex stalk
condition replaced by \(M(\tau)\subseteq M_{t(\tau)}\) for
\(\tau=[v_0,\ldots,v_q]\), where \(t(\tau)=v_q\).
\end{definition}

\begin{remark}\label{remark: right edge action from terminal faces}
The right-handed analogue of Proposition \ref{proposition:edge-action-from-front-faces} holds: under \textup{(S1)} and \textup{(S2)}, the graded Leibniz rule implies that the edge action agrees with the \emph{terminal-face} component of the differential, with sign \((-1)^{q+1}\). Concretely, for \(e\in S_1\) and \(\tau \in S_q\) with \( \tau \star e \) defined, \[ \pi_{\tau \star e} \circ (- \cdot \omega_e) \circ \iota_\tau = (-1)^{q+1}D_{\tau \to \tau \star e}, \quad \pi_\zeta \circ( - \cdot \omega_e) \circ \iota_\tau = 0
\quad (\zeta \neq \tau \star e).\] The sign arises because \(\tau = d_{q+1}( \tau \star e)\) is the last face of \(\tau\star e\), whereas in the left case \(\tau=d_0(e\star\tau)\) is the first face.
\end{remark}

Write \({}^\mathsf{c}\mathcal{M}_S^{\mathrm{DG}}\) for the subcategory of \( {}\mathcal{M}_S^{\mathrm{DG}}\) consisting of cellular right modules with cellular morphisms. For
\(M\in {}^\mathsf{c}\mathcal{M}_S^{\mathrm{DG}}\), define a cellular sheaf
\(F_M^{\mathrm r}\in\mathsf{Sh}(S)\) by \(F_M^{\mathrm r}(\tau)\coloneqq M(\tau)\)
and \(F_M^{\mathrm r}(\tau\to\zeta)\coloneqq (-1)^iD_{\tau\to\zeta}\)
whenever \(d_i\zeta=\tau\), extended to general inclusions via signed
composition along maximal chains exactly as in Lemma
\ref{lemma: sheaf associated to a cellular differential graded module is functorial and well-defined}.

The assignments above define functors \( \mathcal R^{\mathrm r}\colon \mathsf{Sh}(S)\rightarrow {}^{\mathsf c}\mathcal M_S^{\mathsf{DG}}, \ \mathcal S^{\mathrm r}\colon {}^{\mathsf c}\mathcal M_S^{\mathsf{DG}} \rightarrow\mathsf{Sh}(S).\)  Mirroring Theorem \ref{theorem: equivalence between cellular sheaves and differential graded module} shows that these functors define equivalences. In fact, the left and right realizations are two presentations of the same sheaf data. For \(x \in F(\tau)\), the actions \( \rho_\alpha \cdot x \coloneqq F(\tau\to\alpha\star\tau) x \) and \( x \cdot \rho_\beta \coloneqq F( \tau \to \tau \star \beta ) x \) (zero when the concatenation does not exist) are compatible by functoriality, so every cellular sheaf determines a DG-bimodule over \(\Omega_S^\bullet\). 

More generally, the restriction maps \(\{F(\tau\leq\xi)\}_{\tau\leq\xi\text{ in }P_S}\) define a family of graded endomorphisms of \(M_F\) indexed by all face incidences, not only the initial and terminal ones that govern the left and right actions. The condition \(d_{M_F}^2=0\) becomes a system of quadratic relations among these endomorphisms. The left and right actions single out the front and terminal coface blocks, respectively, while the interior coface blocks retain the remaining incidence data. We leave a systematic study of the resulting graded endomorphism algebra to future work.

\section{Connections and curvature over \(\Omega_S^\bullet\)}\label{section: connections and curvature} In this section we study connections on finite-dimensional left \(A_S\)-modules relative to the differential graded algebra \(\Omega_S^\bullet\) and compute their curvature. We use the standard definitions of connections and curvature relative to a differential graded algebra, see e.g. \cite{BeggsMajid2020QRG, DuboisVioletteEtAl1995Curvature, Landi1997Introduction} for additional background. We adopt the same conventions as in \S\ref{section: cellular sheaves, differential modules}, namely the underlying base \(S\in\ssreg_{\leq n}\) is kept fixed and modules are finite-dimensional over \( \Bbbk \). The combinatorial structure of \( \Omega_S^\bullet \) gives a geometric reading of these notions: connections are determined by edge maps (Proposition \ref{prop:edgewise-connections}), while curvature on a \(2\)-simplex is the difference between direct and composite edge transport (Proposition \ref{proposition: curvature written out for DG modules for AW DGA in terms of transport operators}). Closely related transport and curvature formulas occur in \cite{BrauneTongGayBalmazDesbrun2024DEC} and \cite[\S IV]{DimakisMullerHoissen1994Discrete}.

\begin{excerpt}
Recall that, since $A_S$ is finite-dimensional and semisimple, every finite-dimensional left $A_S$-module $M$ is finitely generated projective as a left $A_S$-module. We write $\prescript{}{A_S}{ \operatorname{Hom} }(- , -) $ for spaces of left $ A_S $-linear maps and $\mathrm{Hom}_{A_S}(-,-)$ for right $A_S$-linear ones. We also denote \(  M^{\vee}:=\prescript{}{A_S}{\operatorname{Hom}}(M,A_S),\) seen as right $A_S$-module via $(\varphi\cdot f)(x):=\varphi(x) f$. The following lemma will be used repeatedly. 
\end{excerpt}

\begin{lemma}\label{lem:coeff}
Let \(M\) be a finite-dimensional left \(A_S\)-module and \(p\geq 0\). Then
\[ \Theta_p: M^\vee\otimes_{A_S}\Omega_S^p\otimes_{A_S}M \rightarrow \prescript{}{A_S}{\operatorname{Hom}} \bigl(M,\Omega_S^p\otimes_{A_S}M\bigr), \qquad \Theta_p(\varphi \otimes \beta \otimes m)(x) = \varphi(x)\beta\otimes m, \]
is an isomorphism of \(A_S\)-bimodules, where the bimodule structures are induced by the left and right \(A_S\)-actions on \(\Omega_S^p\). Moreover,
\[ \prescript{}{A_S}{\operatorname{Hom}} \bigl(M,\Omega_S^p\otimes_{A_S}M\bigr) \cong \bigoplus_{\sigma = [v_0, \ldots, v_p]\in S_p} \operatorname{Hom}_{\Bbbk} \bigl( M_{v_0} , M_{v_p}\bigr), \] under which a family \(U_\sigma \colon M_{v_0} \to M_{v_p}\) corresponds to \( x \to  \sum_{\sigma=[v_0, \ldots, v_p] \in S_p} \rho_\sigma\otimes U_\sigma( \delta_{v_0} x).\)
\end{lemma}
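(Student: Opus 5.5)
We prove both statements by decomposing everything along the vertices of \(S\), using semisimplicity of \(A_S\) (Remark \ref{remark: semisimple decomposition of A_s modules}). This reduces each space to a direct sum, indexed by \(S_p\), of blocks of the form \((M_{v_0})^*\otimes_\Bbbk M_{v_p}\) or \(\operatorname{Hom}_\Bbbk(M_{v_0},M_{v_p})\). Under these identifications \(\Theta_p\) becomes, blockwise, the canonical isomorphism \(V^*\otimes W\cong\operatorname{Hom}(V,W)\) for finite-dimensional \(V,W\).

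\textbf{Step 1 (the dual).} Write \(M=\bigoplus_v M_v\). A left \(A_S\)-linear \(\varphi\colon M\to A_S\) satisfies \(\varphi(\delta_v x)=\delta_v\varphi(x)\). Hence \(\varphi|_{M_v}\) takes values in \(\Bbbk\delta_v\), and restriction gives a linear isomorphism
\[ M^\vee\cong\bigoplus_{v\in S_0}(M_v)^*,\qquad \varphi|_{M_v}=\lambda_v(-)\,\delta_v . \]
Under this isomorphism the right action \((\varphi\cdot f)(x)=\varphi(x)f\) is multiplication by \(f(v)\) on the \(v\)-summand. So \(M^\vee\delta_v=(M_v)^*\), and the left action on \(M^\vee\) is trivial apart from the scalars.

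\textbf{Step 2 (the tensor product).} We use that \(\Omega_S^p=\bigoplus_\sigma\Bbbk\rho_\sigma\) with \(\rho_\sigma=\delta_{v_0}\rho_\sigma\delta_{v_p}\). Together with Lemma \ref{lemma: tail-fiber decomposition} and the idempotent trick used in its proof, this gives
\[ M^\vee\otimes_{A_S}\Omega_S^p\otimes_{A_S}M\;\cong\;\bigoplus_{\sigma=[v_0,\ldots,v_p]\in S_p}(M_{v_0})^*\otimes_\Bbbk M_{v_p}, \]
with \(\varphi\otimes\rho_\sigma\otimes m\mapsto \lambda_{v_0}\otimes\delta_{v_p}m\).

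\textbf{Step 3 (the Hom space).} By Lemma \ref{lemma: tail-fiber decomposition}, \(\Omega_S^p\otimes_{A_S}M\cong\bigoplus_\sigma \rho_\sigma\otimes M_{v_p}\), and \(\delta_u\) acts on the \(\sigma\)-summand through \(\delta_{u,v_0}\). A left \(A_S\)-linear map must carry \(M_{v}\) into the \(\delta_v\)-part, which is \(\bigoplus_{\sigma:\,s(\sigma)=v}\rho_\sigma\otimes M_{t(\sigma)}\). Conversely, every family of linear maps between these blocks is \(A_S\)-linear. Collecting the components yields exactly
\[ \prescript{}{A_S}{\operatorname{Hom}}\bigl(M,\Omega_S^p\otimes_{A_S}M\bigr)\cong\bigoplus_{\sigma\in S_p}\operatorname{Hom}_\Bbbk(M_{v_0},M_{v_p}), \]
together with the stated formula \(x\mapsto\sum_\sigma\rho_\sigma\otimes U_\sigma(\delta_{v_0}x)\).

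\textbf{Step 4 (assembling \(\Theta_p\)).} Tracing \(\Theta_p(\varphi\otimes\rho_\sigma\otimes m)(x)=\varphi(x)\rho_\sigma\otimes m\) through Steps 2 and 3, it becomes \(x\mapsto\lambda_{v_0}(\delta_{v_0}x)\,\rho_\sigma\otimes\delta_{v_p}m\). This is the rank-one map \(\lambda_{v_0}\otimes\delta_{v_p}m\) in the \(\sigma\)-block. Hence \(\Theta_p\) is blockwise the canonical map \(V^*\otimes W\to\operatorname{Hom}(V,W)\), which is an isomorphism because \(M\) is finite-dimensional.

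Finally, we check compatibility with the bimodule structures, with \(f,g\) acting through \(\Omega_S^p\). On both sides, left multiplication by \(f\) scales the \(\sigma\)-block by \(f(v_0)\) and right multiplication by \(g\) scales it by \(g(v_p)\). Also, \(\Theta_p\) is well defined on the balanced tensor product, since \(\varphi f\otimes\beta=\varphi\otimes f\beta\) gives \(\varphi(x)f\beta\) in both cases. We expect no real obstacle in this argument. The only step needing care is Step 3, namely keeping track of which idempotent acts where, so that the block direction is \(M_{v_0}\to M_{v_p}\) and not its reverse.
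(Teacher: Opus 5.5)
Your proof is correct, but for the first claim it takes a different route from the paper. The paper does not decompose $\Theta_p$ into blocks. It invokes the standard dual-basis isomorphism $M^\vee\otimes_{A_S}N\cong\prescript{}{A_S}{\operatorname{Hom}}(M,N)$, $(\varphi\otimes n)(x)=\varphi(x)\,n$, which holds for any finitely generated projective $M$ and any left module $N$. It applies this with $N=\Omega_S^p\otimes_{A_S}M$ and then checks that the bimodule structures induced from $\Omega_S^p$ match. Only for the second claim does it pass to idempotents, arguing essentially as in your Step 3: the summand $\Bbbk\rho_\sigma\otimes_{A_S}M\cong M_{v_p}$ is supported at $v_0$ as a left module.

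You instead split both sides into $\sigma$-blocks $(M_{v_0})^*\otimes_\Bbbk M_{v_p}$ and $\operatorname{Hom}_\Bbbk(M_{v_0},M_{v_p})$, and then recognise $\Theta_p$ blockwise as the canonical map $V^*\otimes W\to\operatorname{Hom}(V,W)$.

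The paper's argument is coordinate-free and uses only projectivity of $M$, so it works verbatim for any algebra and any bimodule in place of $\Omega_S^p$. Yours relies on semisimplicity of $A_S$ and the basis $\{\rho_\sigma\}$, but in exchange it does more:
\begin{itemize}
\item it proves both claims at once;
\item it shows that $\Theta_p$ is compatible with the second identification, sending $\varphi\otimes\rho_\sigma\otimes m$ to the rank-one block $\lambda_{v_0}(-)\,\delta_{v_p}m$;
\item it makes the bimodule check immediate, since $f$ and $g$ act on each block by the scalars $f(v_0)$ and $g(v_p)$.
\end{itemize}
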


\begin{proof}
Choose a left dual basis \(\{(m_i,\lambda^i)\}_i\) of \(M\). Since \(M\) is finitely generated and projective, for every left \(A_S\)-module \( N \), the assignment
\[ M^\vee\otimes_{A_S}N \rightarrow \prescript{}{A_S}{\operatorname{Hom}}(M,N),
\qquad (\varphi\otimes n)(x) = \varphi(x)\cdot n, \]
is an isomorphism, with inverse \(F\rightarrow\sum_i\lambda^i\otimes F(m_i)\). Taking \( N = \Omega_S^p\otimes_{A_S}M\) and using the natural left action on \( \Omega_S^p \otimes_{A_S} M\) gives the formula for \(\Theta_p\). Both sides carry \(A_S\)-bimodule structures.
Since \(A_S\) is commutative, the \(A_S\)-bimodule structure of
\(\Omega_S^p \) induces an \( A_S \)-bimodule structure on
\( \Omega_S^p\otimes_{A_S}M\), given by \(f \cdot(\beta\otimes x)\cdot g = (f\beta g)\otimes x. \) These induce \(  A_S \)-bimodule structures on \( M^\vee\otimes_{A_S}\Omega_S^p\otimes_{A_S}M \) and \( \prescript{}{A_S}{\operatorname{Hom}} (M,\Omega_S^p\otimes_{A_S}M) \), given by \[ a \cdot ( \varphi \otimes \beta \otimes m) \cdot b = \varphi \otimes a \beta b \otimes m, \qquad ( a \cdot F \cdot b)(x) = a \cdot F(x) \cdot b, \] respectively. Then \(\Theta_p\) defines an isomorphism of \(A_S\)-bimodules.

For the second statement, we know that \( \Omega_S^p \otimes_{A_S} M \cong \bigoplus_{\sigma = [v_0,\ldots,v_p]\in S_p} \Bbbk \rho_\sigma \otimes_{A_S}M.\) Since \( \rho_\sigma\otimes m = \rho_\sigma\otimes\delta_{v_p}m,\) we can identify \(\Bbbk \rho_\sigma \otimes_{A_S}M \) with \(M_{v_p}\). As a left \(A_S\)-module, however, \(\Bbbk \rho_\sigma \otimes_{A_S}M\) is supported at \(v_0\) since \( \delta_v \cdot (\rho_\sigma \otimes m) =\delta_{v,v_0} \rho_\sigma \otimes m. \) Thus, if \( F\colon M \to \Bbbk \rho_\sigma \otimes_{A_S} M \) is left \( A_S \)-linear and \(x \in M_v \), then \( F(x) = \delta_v F(x),\) which vanishes unless \(v=v_0\). Thus \(F\) is uniquely determined by its restriction to \(M_{v_0}\), and, since \(\Bbbk \rho_\sigma \otimes_{A_S}M\cong M_{v_p}\), this restriction is an arbitrary \(\Bbbk\)-linear map
\(U_\sigma\colon M_{v_0}\rightarrow M_{v_p}.\)

Conversely, such a map determines a left \(A_S\)-linear map \(M\to \Bbbk \rho_\sigma \otimes_{A_S}M\) by \(x \to \rho_\sigma \otimes U_\sigma(\delta_{v_0}x).\) Hence \( \prescript{}{A_S}{\operatorname{Hom}}\bigl(M,\Bbbk \rho_\sigma \otimes_{A_S}M\bigr) \cong \operatorname{Hom}_{\Bbbk}\bigl( M_{v_0}, M_{v_p} \bigr),\) and taking the direct sum over \(\sigma\in S_p\) gives the result.
\end{proof}

\begin{definition}\label{definition: left connection}
A \emph{left connection} on a left \(A_S\)-module \(M\) is a \(\Bbbk\)-linear map
\( \nabla: M \rightarrow \Omega_S^1\otimes_{A_S} M\) satisfying the \emph{Leibniz rule} \( \nabla(fx) = df \otimes x+f \cdot \nabla (x)\) for all \(f\in A_S, \ x\in M\). We denote the set of left
connections on \( M \) by \(\operatorname{Conn}_{A_S}(M)\). The connection \(\nabla\) extends to the degree-\(1\) operator \( \widehat\nabla\colon \Omega_S^\bullet\otimes_{A_S}M \rightarrow
\Omega_S^{\bullet+1}\otimes_{A_S}M \)
defined by
\[ \widehat\nabla(\alpha\otimes x) = d\alpha\otimes x + (-1)^{|\alpha|}
\alpha\wedge\nabla(x), \qquad \alpha\in\Omega_S^\bullet,\quad x\in M. \]
\end{definition}

\begin{remark}\label{remark:affine-space}
Let \( \nabla_0 \) be a left connection on \(M\). Then \[ \prescript{}{A_S}{\operatorname{Hom}} \bigl(M,\Omega_S^1\otimes_{A_S}M\bigr) \rightarrow \operatorname{Conn}_{A_S}(M), \qquad F\to \nabla_0+F\] is a bijection. Thus, whenever it is nonempty, \( \operatorname{Conn}_{A_S} (M)\) is an affine space modeled on \( \prescript{}{A_S}{\operatorname{Hom}} \bigl( M, \Omega_S^1 \otimes_{A_S} M\bigr)\). Indeed, for \(\nabla\) and \(\nabla_0\) two left connections, \[(\nabla-\nabla_0)(fx) = f\cdot(\nabla-\nabla_0)(x), \] so \(\nabla-\nabla_0\in \prescript{}{A_S}{\operatorname{Hom}} \bigl(M,\Omega_S^1\otimes_{A_S}M\bigr)\). Conversely, if \( F\in\prescript{}{A_S}{\operatorname{Hom}} \bigl(M, \Omega_S^1 \otimes_{A_S}M\bigr)\), then \(\nabla_0+F\) satisfies the Leibniz rule and defines a connection.
\end{remark}

Finitely generated projective modules admit left connections, as the following example shows.

\begin{example}\label{example: grassmann connection} The vertex idempotents decompose each left $A_S$-module \(M\) as \( M=\bigoplus_{v\in S_0} M_v,\ M_v \coloneq \delta_v M.\) This splitting gives a canonical \emph{Grassmann connection} \[ \nabla^{\mathrm{can}}(x) \coloneq \sum_{v\in S_0} d\delta_v\otimes \delta_v x = \sum_{e\in S_1}\omega_e\otimes \delta_{d_0(e)}x. \] 

The second equality follows from the incidence formula and the identities \( \omega_e \otimes \delta_{d_1(e)}x=0\) and \(\omega_e\otimes x=\omega_e\otimes\delta_{d_0(e)}x\). Moreover,
\[ \begin{aligned} df\otimes x+f\cdot\nabla^{\mathrm{can}}(x) & = \sum_{e\in S_1}
\bigl(f(d_0(e))-f(d_1(e))+f(d_1(e))\bigr)
\omega_e\otimes\delta_{d_0(e)}x=
\nabla^{\mathrm{can}}(fx),
\end{aligned}\]
Thus \(\nabla^{\mathrm{can}}\) is a left connection. In particular,
\(\operatorname{Conn}_{A_S}(M)\neq\varnothing\).
\end{example}

\begin{remark}\label{rem:not-end-valued}
The ordered tensor product in Lemma \ref{lem:coeff} is the usual finite-projective realization of the coefficient space
\(\prescript{}{A_S}{\operatorname{Hom}}\bigl( M,\Omega_S^p\otimes_{A_S}M\bigr).\)
It should not be confused with an endomorphism-valued \(p\)-form. Classically, if \(A\) is commutative and \(\Omega^p\) is a central \(A\)-bimodule, one has
\[ E^\vee\otimes_A\Omega^p\otimes_AE \cong \Omega^p\otimes_A\bigl(E^\vee\otimes_AE\bigr) \cong \Omega^p\otimes_A \prescript{}{A}{\operatorname{End}}(E) \](cf. \cite[\S 7.2]{Landi1997Introduction}). 
In general, the permutation \( \varphi \otimes \beta \otimes m \rightarrow \beta \otimes \varphi \otimes m \) descends to the tensor products over \(A_S\) only when \(\beta\) is central. Indeed,
applying it to the two sides of the balancing relation \( ( \varphi \cdot f) \otimes \beta \otimes m = \varphi \otimes f \beta \otimes m\) gives \( \beta \otimes( \varphi \cdot f)\otimes m\) and \(f \beta \otimes \varphi\otimes m\). Since \( A_S \) is commutative, the former equals \( \beta f \otimes \varphi \otimes m\). The two agree for every \(f\in A_S\) precisely when \( \beta f = f\beta\). For \(\Omega_S^\bullet\), this condition is not satisfied.

\end{remark}

We now describe connections in terms of edgewise coordinates. For each \(e\in S_1\), define
\begin{equation}\label{eq:right-dual-base}
\varphi^e \in \mathrm{Hom}_{A_S}(\Omega_S^1, A_S), \quad \varphi^e(\omega_{e'}) = \delta_{e,e'} \delta_{d_0(e)}.
\end{equation}
One verifies that each \(\varphi^e\) is right \(A_S\)-linear and that
\(\sum_{e \in S_1} \omega_e \varphi^e = \mathrm{id}_{\Omega_S^1}\) as right
\(A_S\)-linear maps, so that \(\{(\omega_e, \varphi^e)\}_{e \in S_1}\) is a
right \(A_S\)-linear dual basis of \(\Omega_S^1\). Accordingly, for \(\phi \in \mathrm{Hom}_{A_S}(\Omega_S^1, A_S)\), we write \((\phi \otimes \mathrm{id}_M) \colon \Omega_S^1 \otimes_{A_S} M \to M\) for the composite of \(\phi \otimes \mathrm{id}_M\) with \(\mu_M\). In particular, for any \(y \in \Omega_S^1 \otimes_{A_S} M\), \[ y = \sum_{e \in S_1} \omega_e \otimes (\varphi^e \otimes \mathrm{id}_M)(y). \]

\begin{proposition}\label{prop:edgewise-connections} Let \(M\) be a finite-dimensional left \(A_S\)-module and choose \(\{( \omega_e, \varphi^e)\}_{ e \in S_1}\) as in \eqref{eq:right-dual-base}. Let \( \nabla \colon M \rightarrow \Omega_S^{1}\otimes_{A_S} M\) be a left connection and define \( T_e  \coloneq  (\varphi^e \otimes \mathrm{id}_M)\circ \nabla \colon M \rightarrow M. \) Then:
\begin{enumerate}[nosep]
\item For all \(x\in M\), \( \nabla(x) = \sum_{e\in S_1} \omega_e \otimes T_e(x) \) uniquely and \( T_e(x) \in M_{ d_0(e) } \) for every \(x\in M\).
\item For \(f \in A_S\) and \(x\in M\),
\begin{equation}\label{equation: partial leibniz for the endomorphisms T_e} 
T_e(fx)=\partial_e(f)\cdot x + f(d_1(e))T_e(x),
\quad \partial_e(f)  \coloneq  \varphi^e(df)=\bigl(f(d_0(e)) - f(d_1(e))\bigr)\delta_{d_0(e)}.
\end{equation}
\item For each $e\in S_1$, there is a unique map \( R_e\in \operatorname{Hom}_\Bbbk\bigl(M_{d_1(e)},M_{d_0(e)}\bigr)\) such that \[
T_e=\delta_{d_0(e)}-R_e\delta_{d_1(e)}.
\]  Conversely, given a family
\(\{R_e\in \operatorname{Hom}_{\Bbbk}\bigl(M_{d_1(e)},M_{d_0(e)}\bigr)\}_{e\in S_1},\)
the formula \[\nabla(x)  \coloneq  \sum_{e\in S_1}\omega_e\otimes \bigl(\delta_{d_0(e)}-R_e\delta_{d_1(e)}\bigr)(x) \]
defines a unique left connection on \(M\).
\end{enumerate}
\end{proposition}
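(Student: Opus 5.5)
The plan is to prove the three items in order, using the dual basis $\{(\omega_e,\varphi^e)\}_{e\in S_1}$ of \eqref{eq:right-dual-base} and the decomposition $\Omega_S^1\otimes_{A_S}M\cong\bigoplus_{e\in S_1}\Bbbk\omega_e\otimes_{A_S}M$, where each summand is identified with $M_{d_0(e)}$ via $\omega_e\otimes m=\omega_e\otimes\delta_{d_0(e)}m$ (Lemma \ref{lemma: tail-fiber decomposition}).

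For item 1, apply the identity $y=\sum_e\omega_e\otimes(\varphi^e\otimes\mathrm{id}_M)(y)$ to $y=\nabla(x)$. This immediately gives $\nabla(x)=\sum_e\omega_e\otimes T_e(x)$. Since $\varphi^e$ takes values in $\delta_{d_0(e)}A_S$, the element $T_e(x)=\varphi^e(\omega_e)\cdot m$ lies in $M_{d_0(e)}$. For uniqueness, suppose $\sum_e\omega_e\otimes y_e=\sum_e\omega_e\otimes y'_e$ with $y_e,y'_e\in M_{d_0(e)}$. The decomposition into summands indexed by $e$, each isomorphic to $M_{d_0(e)}$, then forces $y_e=y'_e$.

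For item 2, apply $\varphi^e\otimes\mathrm{id}_M$ to the Leibniz rule $\nabla(fx)=df\otimes x+f\cdot\nabla(x)$. The first term contributes $\varphi^e(df)\cdot x$, and $\varphi^e(df)=(f(d_0(e))-f(d_1(e)))\delta_{d_0(e)}$ from $df=\sum_e(f(d_0e)-f(d_1e))\omega_e$. For the second term, $f\cdot\omega_{e'}=f(d_1(e'))\omega_{e'}$, so $f\cdot\nabla(x)=\sum_{e'}f(d_1(e'))\omega_{e'}\otimes T_{e'}(x)$. Applying $\varphi^e$ then yields $f(d_1(e))T_e(x)$.

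Item 3 is the main step. Set $u=d_1(e)$ and $v=d_0(e)$, and evaluate \eqref{equation: partial leibniz for the endomorphisms T_e} on the idempotents.
\begin{itemize}
\item Taking $f=\delta_w$ with $w\neq u,v$ gives $T_e(\delta_w x)=0$, so $T_e$ vanishes on $M_w$.
\item Taking $f=\delta_v$ gives $T_e(\delta_v x)=\delta_v x$, so $T_e$ restricts to the identity (the inclusion) on $M_v$.
\item Taking $f=\delta_u$ gives $T_e(\delta_u x)=-\delta_v x+T_e(x)$. This only determines $T_e$ on $M_u$ modulo nothing, so $T_e|_{M_u}\colon M_u\to M_v$ is free.
\end{itemize}
Define $R_e:=-T_e|_{M_u}$. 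By item 1 its image lies in $M_v$, and using $M=\bigoplus_w M_w$ we get $T_e=\delta_v-R_e\delta_u$. Uniqueness of $R_e$ holds because it is recovered as $-T_e|_{M_u}$. The one point needing care is to use $u\neq v$ (vertex distinctness), so that the cases above do not overlap.

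For the converse, I would rather not re-verify the Leibniz rule by hand but use Remark \ref{remark:affine-space} with $\nabla^{\mathrm{can}}$ from Example \ref{example: grassmann connection}. The canonical connection has $T_e=\delta_{d_0(e)}$. The difference $x\mapsto-\sum_e\omega_e\otimes R_e\delta_{d_1(e)}x$ is left $A_S$-linear: for $f\in A_S$, $f\cdot\omega_e=f(d_1e)\omega_e$ and $R_e\delta_{d_1e}(fx)=f(d_1e)R_e\delta_{d_1e}x$. It is therefore an element of ${}_{A_S}\mathrm{Hom}(M,\Omega^1_S\otimes_{A_S}M)$, and $\nabla^{\mathrm{can}}$ plus it is a connection. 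Its coefficients are $T_e=\delta_{d_0e}-R_e\delta_{d_1e}$, so by the uniqueness in item 1 the correspondence $\nabla\leftrightarrow\{R_e\}$ is bijective.
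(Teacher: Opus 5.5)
Your proposal is correct. Items 1, 2 and the forward half of item 3 follow the paper's proof essentially verbatim. You expand $\nabla(x)$ in the right dual basis, apply $\varphi^e\otimes\mathrm{id}_M$ to the Leibniz rule, and evaluate the partial Leibniz rule at $f=\delta_w,\delta_{d_0(e)},\delta_{d_1(e)}$ to isolate $R_e:=-T_e|_{M_{d_1(e)}}$. Your explicit normalization $y_e\in M_{d_0(e)}$ in the uniqueness claim of item 1 is a useful precision that the paper leaves implicit.

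The genuine difference is in the converse of item 3. The paper's argument is self-contained:
\begin{enumerate}
\item it checks directly that $T_e:=\delta_{d_0(e)}-R_e\delta_{d_1(e)}$ satisfies the partial Leibniz rule \eqref{equation: partial leibniz for the endomorphisms T_e};
\item it sums over $e$ to recover $\nabla(fx)=df\otimes x+f\cdot\nabla(x)$;
\item it gets uniqueness from the right dual basis property.
\end{enumerate}
You instead take $\nabla^{\mathrm{can}}$ from Example \ref{example: grassmann connection} as base point. You observe that $x\mapsto-\sum_e\omega_e\otimes R_e\delta_{d_1(e)}x$ is left $A_S$-linear, and then apply Remark \ref{remark:affine-space}. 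There is no circularity, since neither the example nor the remark uses the proposition.

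What your route buys: it is shorter, because the derivation bookkeeping is absorbed into the already verified innerness $df=[\theta,f]$ behind the Grassmann connection. It also delivers Corollary \ref{corollary:connection-space} in the same stroke, with $\nabla^{\mathrm{can}}\leftrightarrow 0$ and $\nabla-\nabla^{\mathrm{can}}\leftrightarrow(-R_e)_e$. What the paper's route buys: it needs no base connection, and it shows that the partial Leibniz rule, together with the support condition $T_e(M)\subseteq M_{d_0(e)}$, is exactly what a family $(T_e)$ must satisfy to reassemble into a connection.
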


\begin{proof}
The first statement follows by applying the formulas from \eqref{eq:right-dual-base} to \(y=\nabla(x)\). Then \(T_e(x)\in M_{d_0(e)}\) follows since \(\varphi^e(\Omega_S^1)\subseteq \delta_{d_0(e)}A_S\).  For the second assertion, the Leibniz rule gives  
\[ \begin{aligned}
T_e(fx)
&=(\varphi^e\otimes \mathrm{id}_M)(df\otimes x)+(\varphi^e\otimes \mathrm{id}_M)\bigl(f\cdot \sum_{e'\in S_1} \omega_{e'}\otimes T_{e'}(x)\bigr)\\
&=\varphi^e(df)\cdot x+\sum_{e'\in S_1} f(d_1(e'))\varphi^e(\omega_{e'})T_{e'}(x)=\partial_e(f)x + f(d_1(e))T_e(x).\\
\end{aligned} \] For the last part, apply \eqref{equation: partial leibniz for the endomorphisms T_e} with \(f=\delta_u\). Then \[ T_e(\delta_u x) = \bigl(\delta_u(d_0(e))-\delta_u(d_1(e))\bigr)\delta_{d_0(e)}x + \delta_u(d_1(e)) T_e(x). \] Hence \(T_e(\delta_u x)=0\) if \(u\notin\{d_0(e),d_1(e)\}\). For \(u=d_0(e)\), we get \( T_e(\delta_{d_0(e)}x) = \delta_{d_0(e)}x, \) so \(T_e\lvert_{M_{d_0(e)}}=\operatorname{id}_{M_{d_0(e)}}\). Since \(T_e(x)\in M_{d_0(e)}\), the restriction \( R_e \coloneq - T_e\lvert_{M_{d_1(e)}} \in \operatorname{Hom}_\Bbbk(M_{d_1(e)},M_{d_0(e)}) \)
is well-defined. Thus, for \(x=\sum_{v\in S_0}\delta_vx\), we have
\[ \begin{aligned}
T_e(x) &=T_e (\delta_{d_0(e)}x)+T_e(\delta_{d_1(e)}x)+\sum_{u\notin\{d_0(e),d_1(e)\}}T_e(\delta_ux) = \delta_{d_0(e)}x-R_e(\delta_{d_1(e)}x).
\end{aligned}\] Uniqueness of \(R_e\) is immediate by restricting to \(M_{d_1(e)}\).

Conversely, define \( T_e  \coloneq  \delta_{d_0(e)} - R_e \delta_{d_1(e)}.\) Then \(T_e(x)\in M_{d_0(e)}\) for all \(x\in M\). Also, for \( f \in A_S\) and \( x \in M\),
\[ \begin{aligned}
T_e(fx)
&=f(d_0(e)) \delta_{d_0(e)}x-f(d_1(e)) R_e\delta_{d_1(e)}x\\
&=\bigl(f(d_0(e))-f(d_1(e))\bigr)\delta_{d_0(e)}x
+f(d_1(e))\bigl(\delta_{d_0(e)}-R_e\delta_{d_1(e)}\bigr)(x)\\
&=\partial_e(f)\cdot x+f(d_1(e)) T_e(x).
\end{aligned} \]
Therefore \( \nabla(x) \coloneq \sum_{e\in S_1}\omega_e\otimes T_e(x)\)
defines a \(\Bbbk\)-linear map \(M\to \Omega_S^1\otimes_{A_S}M\), and
\[ \begin{aligned}
\nabla(fx)
&=\sum_{e\in S_1}\omega_e\otimes T_e(fx)= \sum_{e\in S_1}\omega_e\otimes \partial_e(f)x
+\sum_{e\in S_1}\omega_e\otimes f(d_1(e))T_e(x)\\
&=\Bigl(\sum_{e\in S_1}(f(d_0(e))-f(d_1(e)))\omega_e\cdot \delta_{d_0(e)}\Bigr)\otimes x
+\sum_{e\in S_1}(f\cdot \omega_e)\otimes T_e(x)\\
&=df\otimes x + f\cdot \nabla(x).
\end{aligned}
\]
So \(\nabla\) is a left connection. If \(\nabla'\) is another connection with \((\varphi^e\otimes \operatorname{id}_M)\circ \nabla'=T_e\) for all \(e\in S_1\), then
\[ (\varphi^e\otimes \operatorname{id}_M)\bigl(\nabla'(x)-\nabla(x)\bigr)=0
\quad\forall x\in M,\ e\in S_1, \]
hence \(\nabla'(x)-\nabla(x)=0\) by the right-dual basis property. Thus \(\nabla\) is unique.
\end{proof}

\begin{corollary}\label{corollary:connection-space}
The assignment \(\nabla\to(R_e)_{e\in S_1}\) of Proposition \ref{prop:edgewise-connections} defines an affine isomorphism \(\operatorname{Conn}_{A_S}(M) \cong \prod_{e\in S_1} \operatorname{Hom}_{\Bbbk} \bigl(M_{d_1(e)},M_{d_0(e)}\bigr). \) Under this identification, the canonical Grassmann connection corresponds to the zero family. More precisely, under the simplexwise identification of Lemma \ref{lem:coeff}, the relative map \(\nabla-\nabla^{\mathrm{can}}\) corresponds to the family \((-R_e)_{e\in S_1}\). \end{corollary}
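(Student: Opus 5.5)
The plan is to read off the corollary from Proposition \ref{prop:edgewise-connections} and Remark \ref{remark:affine-space}. First, by part (3) of Proposition \ref{prop:edgewise-connections}, every connection \(\nabla\) determines a unique family \((R_e)_{e\in S_1}\), and every family determines a unique connection via
\[ \nabla(x)=\sum_{e\in S_1}\omega_e\otimes\bigl(\delta_{d_0(e)}-R_e\delta_{d_1(e)}\bigr)(x). \]
The two assignments are mutually inverse. Starting from \(\nabla\), part (1) writes \(\nabla(x)=\sum_e\omega_e\otimes T_e(x)\) with \(T_e=\delta_{d_0(e)}-R_e\delta_{d_1(e)}\), so the connection rebuilt from \((R_e)\) is \(\nabla\). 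Starting from \((R_e)\), applying \(\varphi^e\otimes\mathrm{id}_M\) to the constructed \(\nabla\) returns \(T_e\), and uniqueness of \(R_e\) returns the family. This gives a bijection \(\operatorname{Conn}_{A_S}(M)\cong\prod_e\operatorname{Hom}_\Bbbk(M_{d_1(e)},M_{d_0(e)})\).

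Second, I will show this bijection is affine. The map \((R_e)\mapsto\nabla\) is, by the displayed formula, the constant \(\sum_e\omega_e\otimes\delta_{d_0(e)}x\) plus the linear map \((R_e)\mapsto\bigl(x\mapsto-\sum_e\omega_e\otimes R_e\delta_{d_1(e)}x\bigr)\). Hence it respects affine combinations, and so does its inverse. The constant term is exactly \(\nabla^{\mathrm{can}}\) of Example \ref{example: grassmann connection}, written in its second form \(\sum_e\omega_e\otimes\delta_{d_0(e)}x\). Therefore \(\nabla^{\mathrm{can}}\) corresponds to the zero family.

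Third, for the relative statement, I will compute
\[ (\nabla-\nabla^{\mathrm{can}})(x)=\sum_{e}\omega_e\otimes(-R_e)(\delta_{d_1(e)}x). \]
By Remark \ref{remark:affine-space} this is left \(A_S\)-linear. I then compare it with Lemma \ref{lem:coeff} for \(p=1\). There a family \(U_e\colon M_{v_0}\to M_{v_1}\), for \(e=[v_0,v_1]\), corresponds to \(x\mapsto\sum_e\rho_e\otimes U_e(\delta_{v_0}x)\). Here one must match conventions: in \(\omega_e\otimes y\) only \(\delta_{d_0(e)}y\) survives, the left support of \(\omega_e\) is the source vertex, and \(R_e\) is applied to \(\delta_{d_1(e)}x\). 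I will check carefully that \(d_1(e)\) is the vertex the lemma calls \(v_0\) and \(d_0(e)\) the one it calls \(v_1\), consistent with \(d_1(e)=\operatorname{source}(e)\). Once this is confirmed, \(U_e=-R_e\) on the nose.

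Bookkeeping this face-index convention is the only real obstacle; the rest is immediate from the proposition.
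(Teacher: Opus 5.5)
Your proposal is correct and matches the paper's proof. The bijection is read off from Proposition~\ref{prop:edgewise-connections}, and the relative statement is the same one-line computation of \(\nabla-\nabla^{\mathrm{can}}\) compared against Lemma~\ref{lem:coeff}; the index convention you flag does hold, since \(d_1(e)=v_0\) and \(d_0(e)=v_1\) for \(e=[v_0,v_1]\). Your ``constant plus linear'' argument for affineness, and for the Grassmann connection being the zero family, just spells out what the paper leaves as ``follows directly''.
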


\begin{proof}
The affine bijection follows directly from Proposition
\ref{prop:edgewise-connections}. For the final statement,
\[ \begin{aligned}
\bigl(\nabla-\nabla^{\mathrm{can}}\bigr)(x) & =
\sum_{e\in S_1} \omega_e\otimes \left( \delta_{d_0(e)} - R_e\delta_{d_1(e)} - \delta_{d_0(e)}
\right)(x) \\ & = -\sum_{e\in S_1} \omega_e\otimes R_e\bigl(\delta_{d_1(e)}x\bigr).
\end{aligned} \] Comparison with the formula of Lemma \ref{lem:coeff} gives \(U_e=-R_e\).
\end{proof}

\begin{definition}\label{definition:curvature} Let \(\nabla \in \operatorname{Conn}_{A_S}(M)\) and \(\widehat \nabla \) its extension from Definition \ref{definition: left connection}. The \emph{curvature} of \(\nabla\) is the map \( K_\nabla \coloneq \widehat\nabla^{2}\big|_M \colon M\rightarrow \Omega_S^2 \otimes_{A_S} M.\) \end{definition}

The cancellation of the two Leibniz terms shows that \(K_\nabla\) is left \(A_S\)-linear (cf. \cite[Lemma 3.19]{BeggsMajid2020QRG}). Hence
\( K_\nabla \in \prescript{}{A_S}{ \operatorname{Hom}} 
\bigl( M, \Omega_S^2 \otimes_{A_S} M\bigr)\) and through \( \Theta_2^{-1}\), it can equivalently be represented in \( M^\vee \otimes_{A_S} \Omega_S^2 \otimes_{A_S} M\). We write \((K_\nabla)_\sigma\) for the component of \(K_\nabla\)
indexed by \(\sigma\in S_2\). Applying \(\widehat \nabla\) to the edgewise expansion from Proposition \ref{prop:edgewise-connections} gives
\begin{equation}\label{equation: curvature expansion}
K_\nabla(x) = \sum_{e\in S_1}d\omega_e\otimes T_e(x) - \sum_{e,e'\in S_1}
\omega_e\wedge\omega_{e'} \otimes T_{e'}\bigl(T_e(x)\bigr).
\end{equation}

\begin{proposition}\label{proposition: curvature written out for DG modules for AW DGA in terms of transport operators}
Let \(\nabla\) be a left connection and let \( \sigma = [v_0,v_1,v_2] \in S_2\). Write \( e_{01}^{ \sigma} \coloneq d_2\sigma,\ e_{02}^{\sigma} \coloneq d_1\sigma,\ e_{12}^{\sigma} \coloneq d_0\sigma, \) and, for \(0 \leq i < j \leq 2\), set \( R_{ij}^{\sigma} \coloneq R_{e_{ij}^{ \sigma} } \colon M_{v_i} \rightarrow M_{v_j}. \) Then \( ( K_\nabla )_\sigma = R_{02}^{\sigma} - R_{12}^{\sigma} R_{01}^{\sigma} \colon M_{v_0} \rightarrow M_{v_2}.\)
Equivalently,\[ K_\nabla(x) = \sum_{\sigma=[v_0,v_1,v_2]\in S_2}
\rho_\sigma\otimes \bigl( R_{02}^{\sigma } - R_{12}^{\sigma}R_{01}^{\sigma}
\bigr)(\delta_{v_0}x).\]
\end{proposition}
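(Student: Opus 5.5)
The plan is to compute $K_\nabla$ directly from the expansion \eqref{equation: curvature expansion}, reading off the coefficient of each basis form $\rho_\sigma$. Since $K_\nabla$ is left $A_S$-linear, Lemma \ref{lem:coeff} applies: the component $(K_\nabla)_\sigma$ for $\sigma=[v_0,v_1,v_2]$ is determined by evaluating $K_\nabla$ on $x\in M_{v_0}$. We then collect, among the resulting terms, those of the form $\rho_\sigma\otimes(\text{element of }M_{v_2})$. Throughout, I use three facts. First, $T_e=\delta_{d_0(e)}-R_e\delta_{d_1(e)}$ from Proposition \ref{prop:edgewise-connections}. Second, $\rho_\tau\otimes m=\rho_\tau\otimes\delta_{t(\tau)}m$. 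Third, $\omega_e\wedge\omega_{e'}$ equals $\rho_{[a,b,c]}$ when $e=[a,b]$, $e'=[b,c]$ and $[a,b,c]\in S_2$, and vanishes otherwise.

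For the first sum in \eqref{equation: curvature expansion}, take $e=[a,b]$. Then
\[ d\omega_e=\sum_{[u,a,b]}\rho_{[u,a,b]}-\sum_{[a,u,b]}\rho_{[a,u,b]}+\sum_{[a,b,u]}\rho_{[a,b,u]}. \]
Tensoring with $T_e(x)\in M_b$ kills the last group, since those simplices have tail $u\neq b$. The coefficient of $\rho_\sigma$ can therefore only come from $e=[v_1,v_2]$ via the first group, or from $e=[v_0,v_2]$ via the second. In the first case $T_e(x)=0$, because $x\in M_{v_0}$ and $v_0\notin\{v_1,v_2\}$ by vertex distinctness. In the second case $T_e(x)=-R_{02}^\sigma x$, and with the minus sign this contributes $+R^\sigma_{02}x$.

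For the second sum, $\omega_e\wedge\omega_{e'}=\rho_\sigma$ forces $e=e^\sigma_{01}$ and $e'=e^\sigma_{12}$. Then $T_e(x)=-R^\sigma_{01}x\in M_{v_1}$, since the $\delta_{v_1}$ part vanishes on $x$. Next, $T_{e'}(-R^\sigma_{01}x)=R^\sigma_{12}R^\sigma_{01}x$, because the $\delta_{v_2}$ part vanishes on $M_{v_1}$. With the overall minus sign this contributes $-R^\sigma_{12}R^\sigma_{01}x$. Adding the two contributions gives $(K_\nabla)_\sigma=R^\sigma_{02}-R^\sigma_{12}R^\sigma_{01}$. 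The global formula then follows from the correspondence in Lemma \ref{lem:coeff}, by summing over $\sigma\in S_2$ with $\delta_{v_0}x$ inserted.

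The main obstacle is bookkeeping rather than ideas. One must confirm that no other pair $(e,e')$ or coface term produces $\rho_\sigma$, which follows from vertex-injectivity and the distinctness of vertices. One must also fix the signs in $d\omega_e$ correctly from the definition of $d$ in \ref{excerpt: Object part of functor D}. If needed, I would first re-derive \eqref{equation: curvature expansion} itself from $\widehat\nabla(\omega_e\otimes m)=d\omega_e\otimes m-\omega_e\wedge\nabla(m)$, to make sure the sign of the second sum is as stated.
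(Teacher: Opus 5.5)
Your proposal is correct and is essentially the paper's argument. The paper collects the $\rho_\sigma$-contributions of \eqref{equation: curvature expansion} into $Q_\sigma=T^\sigma_{01}-T^\sigma_{02}+T^\sigma_{12}-T^\sigma_{12}T^\sigma_{01}$ and evaluates $\delta_{v_2}Q_\sigma\delta_{v_0}$ using $T^\sigma_{ij}=\delta_{v_j}-R^\sigma_{ij}\delta_{v_i}$; your evaluation on $x\in M_{v_0}$, together with killing the $[a,b,u]$ group via $\rho_\tau\otimes m=\rho_\tau\otimes\delta_{t(\tau)}m$, is the same computation in slightly different packaging.
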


\begin{proof}
For the proof, write \(T_{ij}^{\sigma}\coloneq T_{e_{ij}^{\sigma}}\), \(\omega_{ij}^{\sigma}:=\omega_{e_{ij}^{\sigma}}\). The \(\rho_\sigma\)-contributions to \eqref{equation: curvature expansion} are \( +d\omega_{01}^\sigma,\) \(-d\omega_{02}^\sigma, + d\omega_{12}^\sigma\), together with \( \omega_{01}^\sigma \wedge \omega_{12}^\sigma = \rho_\sigma \). Hence the \(\sigma\)-contribution to \( K_\nabla \) is \[ \rho_\sigma \otimes Q_\sigma, \qquad Q_\sigma=T^\sigma_{01}-T^\sigma_{02}+T^\sigma_{12}-T^\sigma_{12}T_{01}^\sigma.\] Since \(\rho_\sigma = \rho_\sigma \delta_{v_2},\) we have \(\rho_\sigma \otimes Q_\sigma(x) = \rho_\sigma \otimes \delta_{v_2} Q_\sigma(x). \) Moreover, under the  identification of Lemma \ref{lem:coeff}, the coefficient of \(\rho_\sigma\) is obtained by restricting the input to \(M_{v_0}\). Hence \( (K_\nabla)_\sigma = \delta_{v_2} Q_\sigma \delta_{v_0}.\) By Proposition \ref{prop:edgewise-connections}, \( T^\sigma_{ij} = \delta_{v_j}-R^\sigma_{ij} \delta_{v_i},\) so only the terms \( \delta_{v_2} T^\sigma_{02} \delta_{v_0} = -R^\sigma_{02} \) and \( \delta_{v_2} T^\sigma_{12} T^\sigma_{01} \delta_{v_0} = (-R^\sigma_{12})(-R^\sigma_{01}) = R^\sigma_{12}R^\sigma_{01}\) remain. Hence \( (K_\nabla)_\sigma = R^\sigma_{02}-R^\sigma_{12}R^\sigma_{01}\) and the result follows after summation.
\end{proof}

Hence, the curvature component on a triangle is precisely the difference between direct and composite transport. Consequently, \(K_\nabla\) vanishes on \(\sigma=[v_0,v_1,v_2]\) if and only if \( R^\sigma_{02}=R^\sigma_{12}R^\sigma_{01}.\)

\begin{excerpt}\label{excerpt: right connections and bimodule structure} The above has a natural right-handed counterpart. For a right \(A_S\)-module \(N\), write \(N_v:=N\delta_v\). Right connections \(\widetilde \nabla \colon N \rightarrow N \otimes_{A_S} \Omega_S^1\) are in bijection with families \[\left\{ R_e^{\mathrm r}\in \operatorname{Hom}_{\Bbbk} \bigl( N_{d_0(e)}, N_{d_1(e)}\bigr) \right\}_{e\in S_1}.\] Explicitly, \[  \widetilde \nabla(x) = \sum_{e\in S_1} \left( R_e^{\mathrm r}(x\delta_{d_0(e)}) - x\delta_{d_1(e)} \right)\otimes \omega_e.\] For \(\sigma=[v_0,v_1,v_2]\in S_2\), use the notation \(e_{ij}^{\sigma}\) of Proposition \ref{proposition: curvature written out for DG modules for AW DGA in terms of transport operators}
and set \(R_{ij}^{\mathrm r,\sigma} \coloneq R_{e_{ij}^{\sigma}}^{\mathrm r}. \)
Then \( (\widetilde K_{\widetilde\nabla})_\sigma = R_{01}^{\mathrm r,\sigma}R_{12}^{\mathrm r,\sigma} - R_{02}^{\mathrm r,\sigma} \colon N_{v_2}\rightarrow N_{v_0}.\) Notice that the composition order is reversed relative to the left-handed component, yet both expressions measure the defect between direct and composite transport.

The left and right versions are independent when their underlying modules are unrelated. To compare them on the same underlying vector space, use the commutativity of \(A_S\) to equip a left \(A_S\)-module \(M\) with right action \(x \cdot f\coloneq f\cdot x.\) Let \( \nabla \) be the left connection corresponding under Proposition \ref{prop:edgewise-connections} to the edge transports \( R_e \colon M_{d_1(e)}\rightarrow M_{d_0(e)}. \) Define \[ \xi_\nabla \colon M\otimes_{A_S} \Omega_S^1 \rightarrow \Omega_S^1 \otimes_{A_S} M, \qquad \xi_\nabla (x \otimes \omega_e) = \omega_e \otimes R_e \bigl( \delta_{d_1(e)} x \bigr).\] Since \( f\omega_e=f(d_1(e)) \omega_e, \ \omega_e f = f(d_0(e))\omega_e \), this is a morphism of \(A_S\)-bimodules. Moreover, Proposition \ref{prop:edgewise-connections} gives \[ \nabla( x \cdot f) - \nabla(x) \cdot f = \sum_{e\in S_1} \bigl( f(d_0(e)) - f(d_1(e) )\bigr) \omega_e \otimes R_e\bigl(\delta_{d_1(e)}x\bigr)  = \xi_\nabla(x\otimes df),\] so \((M, \nabla, \xi_\nabla)\) is a left bimodule connection in the sense of \cite[\S 3.4]{BeggsMajid2020QRG}.

Suppose that every edge transport \(R_e \colon M_{d_1(e)} \rightarrow M_{d_0(e)}\)
is an isomorphism. Then also \( \xi_\nabla\) is an isomorphism with \( \xi_\nabla^{-1}( \omega_e \otimes y) = R_e^{-1} \bigl( \delta_{ d_0(e) } y \bigr) \otimes \omega_e.\) Hence, by \cite[Lemma 3.70]{BeggsMajid2020QRG}, \( \widetilde \nabla\coloneq\xi_\nabla^{-1}\circ\nabla \) defines a right bimodule connection \((M,\widetilde\nabla,\xi_\nabla^{-1})\). Its right transports are \(R_e^{\mathrm r}=R_e^{-1}\), and explicitly \[ \widetilde \nabla(x) = \sum_{e\in S_1} \left( R_e^{-1}(x \delta_{d_0(e)}) - x \delta_{d_1(e)} \right) \otimes\omega_e. \] For an edge \(e\), the map \(R_e^{-1}\) denotes backward transport through that same edge. If \(e'\in S_1\) with \(d_1(e')=d_0(e), \ d_0(e')=d_1(e), \) then \(R_{e'}\) and \( R_e^{-1}\) have the same source and target, but no relation between them is imposed.

For \( \sigma \in S_2 \), write \( C_\sigma^\ell = R_{02}^{\sigma} - R_{12}^{\sigma} R_{01}^{\sigma}, \ C_\sigma^{\mathrm r} = ( R_{01}^{\sigma})^{-1} (R_{12}^{\sigma})^{-1} - (R_{02}^{\sigma})^{-1}.\) They are related by \[ C_\sigma^{\mathrm r} = (R_{01}^{\sigma})^{-1} ( R_{12}^{\sigma})^{-1} C_\sigma^\ell (R_{02}^{\sigma})^{-1},\] so \(C^\ell_\sigma\) and \(C^{\mathrm r}_\sigma\) vanish simultaneously. Thus the induced right connection is flat if and only if the original left connection is flat. Both measure the holonomy \( H_\sigma \coloneq ( R_{02}^{\sigma})^{ -1 } R_{ 12 }^{ \sigma} R_{01}^{ \sigma } \in \operatorname{Aut}_{\Bbbk} (M_{v_0})\), since \(( R_{02} ^{\sigma}) ^{-1}C_\sigma^ \ell = I - H_\sigma\) and \(C_\sigma^{\mathrm r} R^\sigma_{02} = H_\sigma^{-1}-I\).
We adopt the left-handed convention throughout, cf. \cite{BrauneTongGayBalmazDesbrun2024DEC, fioresi2026sheaves} for the right-handed one.
\end{excerpt}

\section{Connection sheaves and connections}\label{section: connection sheaves and connections} Sections \S\S \ref{section: cellular sheaves, differential modules}--\ref{section: connections and curvature} consider cellular DG-modules that admit arbitrary linear restriction maps. A substantial part of the applied sheaf-based learning literature instead restricts to the case where these maps are invertible, and often further constrains them to be orthogonal, motivated by the analogy with parallel transport on Riemannian manifolds, see for instance \cite{bamberger2025bundle, Barbero2022SheafConnectionLaplacians, Battiloro2022TangentBundleFilters, bodnar2022NeuralSheafDiffusion, SingerWu2012VDM}. Such sheaves appear in \cite{HansenGhrist2019Spectral} under the name \emph{discrete vector bundles}. Following \cite{Ayzenberg2025SheafTheory}, we call them \emph{connection sheaves}. We devote this section to this class and to its interaction with the curvature theory of \S\ref{section: connections and curvature}. The main statement is the extension criterion in Proposition \ref{proposition: edge transport extension criterion}, telling us that a family of vertex spaces and edge isomorphisms extends to a connection sheaf on \( S \) precisely when its curvature vanishes on every \(2\)-simplex, and the extension is then unique up to cellular gauge.

\begin{definition}\label{definition: connection sheaf}
A \emph{connection sheaf} on \(S\in\ssreg\) is a cellular sheaf \(F\colon P_S\rightarrow \mathrm{Vect}_\Bbbk\) such that \(F( \sigma \leq \tau) \colon F( \sigma) \rightarrow F( \tau)\) is an isomorphism for all \( \sigma \leq \tau\). A \emph{rank-\(d\) connection sheaf} is a connection sheaf whose stalks all have dimension \(d\).

If \(V\) is a fixed \(d\)-dimensional vector space and \(G\subseteq \mathrm{GL}(V)\), a \emph{framed \(G\)-connection sheaf} is a rank-\(d\) connection sheaf together with identifications \(F (\sigma) \cong V\) such that all structure maps are represented by elements of \(G\). We denote by \(\mathsf{CSh}(S)\subseteq \mathsf{Sh}(S)\) the full subcategory of connection sheaves.
\end{definition}

\begin{excerpt} A \emph{path in \(P_S\)} is a finite sequence \( \gamma=(\sigma_0,\sigma_1,\ldots,\sigma_n)\) of simplices such that every two consecutive simplices are comparable. For a connection sheaf $F$ on \(S\in\ssreg\) and comparable \(\alpha,\beta\in P_S\), set  \[ F(\alpha\lessgtr\beta) \coloneq
\begin{cases}
F(\alpha \leq \beta),&\alpha \leq \beta, \\[2mm]
F(\beta \leq \alpha)^{-1},&\alpha \geq\beta.
\end{cases} \]
The \emph{parallel transport} along \(\gamma\) is \[ T_\gamma \coloneq F(\sigma_{ n-1}\lessgtr\sigma_n) \circ \ldots\circ F(\sigma_0\lessgtr\sigma_1) \colon F(\sigma_0)\rightarrow F(\sigma_n).
\] Functoriality and invertibility imply that \(T_\gamma\) is unchanged under the elementary edge-path moves in the order complex \(N(P_S)\). Hence parallel transport depends only on the homotopy class of \( \gamma \) relative to its endpoints. More details can be found in \cite[\S A.4]{Ayzenberg2025SheafTheory}. For a base simplex \(\sigma\), parallel transport induces the \emph{monodromy representation} \[ \operatorname{Mon}_{F,\sigma}\colon \pi_1 \bigl( |N(P_S)|, \sigma\bigr) \rightarrow \operatorname{Aut}_{\Bbbk}\bigl(F(\sigma)\bigr).\] We say that \(F\) has \emph{trivial monodromy} if this representation is trivial.
\end{excerpt}

\begin{excerpt}\label{excerpt: transport operators for connection sheaves}
Let \(F\in\mathsf{CSh}(S)\) and \(\prescript{\mathsf{c}}{S}{\mathcal{M}}^{\mathsf{DG}} \) its associated cellular DG-module as in \S \ref{section: cellular sheaves, differential modules}. For an edge \( e \in S_1\), define \( R_e \coloneq F\bigl ( d_0(e) \leq e \bigr)^{-1} \circ F\bigl( d_1 ( e )\leq e\bigr) \colon F\bigl( d_1(e) \bigr)\rightarrow F\bigl( d_0(e) \bigr).\) By Corollary \ref{corollary:connection-space}, these edge transports determine the left connection \( \nabla_F \colon M_F^0 \rightarrow \Omega_S^1 \otimes_{A_S} M_F^0\) given by \[ \nabla_F(x) = \sum_{e\in S_1} \omega_e\otimes \left( \delta_{ d_0(e)} x - R_e \delta_{d_1(e)} x \right). \] For \( y \in M_F^0 \), the action map introduced in \S \ref{section: cellular sheaves, differential modules} satisfies \( \mu_{M_F} ( \omega_e \otimes y) = F\bigl( d_0(e) \leq e \bigr) \bigl( \delta_{ d_0 (e) } y \bigr) \in F(e).\)
Hence, \[ \mu_{M_F}\nabla_F(x) = \sum_{e\in S_1} \left( F \bigl( d_0 (e) \leq e \bigr) \bigl ( \delta_{ d_0 (e)} x \bigr) - F \bigl( d_1 (e) \leq e \bigr) \bigl( \delta_{ d_1 (e) } x \bigr) \right) = d_{M_F}|_{M_F^0}(x).\] For \( \sigma = [v_0, v_1, v_2] \in S_2 \), let \( e_{01}^\sigma = d_2\sigma,\ e_{02}^\sigma = d_1\sigma,\ e_{12}^\sigma = d_0\sigma,\) and write \( R_{ij}^\sigma \coloneq R_{e_{ij}^\sigma}\). Proposition \ref{proposition: curvature written out for DG modules for AW DGA in terms of transport operators} gives \( (K_{\nabla_F})_\sigma = R_{02}^\sigma - R_{12}^\sigma R_{01}^\sigma \colon F(v_0)\rightarrow F(v_2).\) 
\end{excerpt}

\begin{remark}\label{remark: DG module reinterpretation of connection sheaves}
Under the equivalence of Theorem \ref{theorem: equivalence between cellular sheaves and differential graded module}, connection sheaves correspond to cellular DG-modules for which every block \( D_{\tau\to\zeta}\colon M(\tau) \rightarrow M(\zeta)\) is an isomorphism. For an edge \(e\in S_1\), the degree-zero blocks recover its transport by \(R_e = -D_{d_0(e)\to e}^{-1} \circ D_{d_1(e)\to e}.\) Equivalently, the associated connection on \(M^0\) is characterized by
\(\mu_M\circ\nabla=d_M|_{M^0}.\) 
\end{remark}

\begin{excerpt}\label{excerpt: face flatness versus monodromy freeness}
Two conditions should be distinguished. Edge transport data are \emph{face-flat} if \( R_{02}^\sigma = R_{12}^\sigma R_{01}^\sigma\)  for every \(2\)-simplex \(\sigma=[v_0,v_1,v_2]\), that is, the edge data satisfy the 2-simplex cocycle condition. A connection sheaf has \emph{trivial monodromy} if its monodromy representation is trivial.

For a connection sheaf defined on the full face poset \(P_S\), face-flatness holds automatically by functoriality. Trivial monodromy additionally requires identity transport around all closed paths. It is therefore a stronger global condition. For example, on a cycle with no \(2\)-simplices, face-flatness is satisfied whereas the monodromy around the cycle may be nontrivial.

The terminology can differ in the literature. Connection sheaves with trivial monodromy are called flat in \cite{Ayzenberg2025SheafTheory}. We reserve \emph{flatness} for the local condition \(K_\nabla=0\), and use \emph{trivial monodromy} for the global condition. Conversely, for vertex and edge data not yet extended to higher-dimensional cells, face-flatness is precisely the cocycle condition governing the existence of such an extension, as we now show.
\end{excerpt}

\begin{definition}\label{definition: gauges for edge transport}
Let \(F,F'\in \mathsf{CSh}(S)\) with \(S\in\ssreg\). A \emph{gauge} from \(F\) to \(F'\) is a natural isomorphism \(h\colon F\rightarrow F'.\) Suppose that \(F\) and \(F'\) satisfy \(F(v)=F'(v)=E_v\) for every \(v\in S_0\). A gauge \( h \colon F \Rightarrow F' \) is called a \emph{cellular gauge} if \( h_v = \operatorname{id}_{E_v} \ \text{for every \(v\in S_0\)}.\)
\end{definition}

\begin{proposition}\label{proposition: edge transport extension criterion}
Let \( S \in \ssreg_{\le n}\). Suppose that for every vertex
\( v \in S_0\) we are given a finite-dimensional vector space \(E_v\), and for every
\( e\in S_1 \), an isomorphism \(R_e \colon E_{d_1(e)}\rightarrow E_{d_0(e)}.\) For every \(\sigma = [v_0, v_1, v_2] \in S_2\), use the notation \(R_{ij}^{\sigma}=R_{e_{ij}^{\sigma}}\) introduced in
Proposition \ref{proposition: curvature written out for DG modules for AW DGA in terms of transport operators}. The following are equivalent:
\begin{enumerate}[label=\textup{(\roman*)},nosep]
\item For every \(\sigma=[v_0,v_1,v_2]\in S_2\), \(R_{02}^\sigma = R_{12}^\sigma R_{01}^\sigma.\)
\item The left connection on \( M^0 := \bigoplus_{v\in S_0} E_v\)  with edge transports \(\{R_e\}_{e\in S_1}\) has curvature zero on every \(2\)-simplex.  
\item The vertex spaces and edge transports extend to a connection sheaf
\( F\colon P_S \rightarrow \mathrm{Vect}_\Bbbk \) such that \(F(v)=E_v\) for every \(v\in S_0\), and, for every \(e\in S_1\), \( R_e = F(d_0(e)\leq e)^{-1} \circ F (d_1(e)\leq e ). \)
\end{enumerate}
Moreover, such an extension is unique up to cellular gauge.
\end{proposition}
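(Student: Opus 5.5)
(i)$\Leftrightarrow$(ii) is immediate from Proposition \ref{proposition: curvature written out for DG modules for AW DGA in terms of transport operators}. Applied to $M^0=\bigoplus_v E_v$ with $M^0_v=E_v$, it gives $(K_\nabla)_\sigma=R^\sigma_{02}-R^\sigma_{12}R^\sigma_{01}$ for every $\sigma\in S_2$. Hence curvature vanishes on $\sigma$ exactly when the cocycle identity holds there.

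For (iii)$\Rightarrow$(i), take $\sigma=[v_0,v_1,v_2]$ and write $F_{\alpha\sigma}:=F(\alpha\leq\sigma)$. Functoriality gives $F_{v_i\sigma}=F(e\leq\sigma)\circ F(v_i\leq e)$ for any edge $e$ of $\sigma$ containing $v_i$. Substituting this into $R_e=F(d_0e\leq e)^{-1}F(d_1e\leq e)$ yields $R^\sigma_{ij}=F_{v_j\sigma}^{-1}F_{v_i\sigma}$. The identity $R^\sigma_{12}R^\sigma_{01}=F_{v_2\sigma}^{-1}F_{v_1\sigma}F_{v_1\sigma}^{-1}F_{v_0\sigma}=R^\sigma_{02}$ is then immediate.

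For (i)$\Rightarrow$(iii), I construct $F$ by transporting everything to the last vertex. For $\tau=[u_0,\ldots,u_q]$ set $F(\tau):=E_{u_q}$. For a face $\alpha\leq\tau$, let $t(\alpha)=u_k$ and define
\[ F(\alpha\leq\tau):=R_{[u_{q-1},u_q]}^{-1}\cdots R_{[u_k,u_{k+1}]}^{-1}\colon E_{u_k}\to E_{u_q}, \]
with the identity when $k=q$. This is the inverse of the composite forward transport along the spine of $\tau$ from $u_k$ to $u_q$. The key point is to show this composite agrees with the transport along any increasing vertex sequence inside $\tau$, in particular along the spine of $\alpha$, which is needed for functoriality. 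Here (i) does the work. Lemma \ref{lemma: simplices inject into clique complex} says every pair $(u_i,u_j)$ with $i<j$ is an edge, and every triple $(u_i,u_j,u_l)$ is a $2$-face of $\tau$. Applying (i) to these triples, induction on length shows that for $i<j$ the product of $R$'s along any increasing chain from $u_i$ to $u_j$ equals $R_{[u_i,u_j]}$. Consequently $F(\alpha\leq\tau)=R_{[t(\alpha),t(\tau)]}^{-1}$ (or the identity), and functoriality $F(\beta\leq\tau)=F(\alpha\leq\tau)F(\beta\leq\alpha)$ reduces to
\[ R_{[a,c]}^{-1}=R_{[b,c]}^{-1}R_{[a,b]}^{-1}\quad\text{for } a<b<c \text{ in } \tau, \]
which is again (i). All maps are isomorphisms. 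Checking the edge condition: for $e=[a,b]$ we have $F(b\leq e)=\mathrm{id}$ and $F(a\leq e)=R_e^{-1}$, so $F(d_0e\leq e)^{-1}F(d_1e\leq e)=R_e^{-1}$. This is inverted, so I redefine with the opposite orientation: set $F(\tau):=E_{u_0}$ and $F(\alpha\leq\tau):=R_{[u_0,s(\alpha)]}$ (forward transport from $s(\alpha)$ back toward $u_0$, since $R_{[u_0,u]}\colon E_u\to E_{u_0}$). Then for $e=[a,b]$ we get $F(a\leq e)=\mathrm{id}$ and $F(b\leq e)=R_e$, giving $F(d_0e\leq e)^{-1}F(d_1e\leq e)$ equal to $R_e$ or $R_e^{-1}$ according to the paper's convention $d_1e=$ source. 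I will fix the orientation so that exactly $R_e$ results; by (i) either choice satisfies functoriality.

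For uniqueness up to cellular gauge, let $F,F'$ be two extensions. For $\tau$ with first vertex $u_0$, set
\[ h_\tau:=F'(u_0\leq\tau)\circ F(u_0\leq\tau)^{-1}, \qquad h_v=\mathrm{id}_{E_v}. \]
Naturality on $\alpha\leq\tau$ requires $F'(\alpha\leq\tau)h_\alpha=h_\tau F(\alpha\leq\tau)$. Composing with the vertex maps $F(s(\alpha)\leq\alpha)$, this reduces to comparing $F(u_0\leq\tau)^{-1}F(w\leq\tau)$ with its primed version for vertices $w$ of $\tau$. By the computation in (iii)$\Rightarrow$(i) (extended to $\tau$ via its edge $[u_0,w]$), both equal $R_{[u_0,w]}$ or its inverse, which is independent of $F$.

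The main obstacle is the sign and orientation bookkeeping in (i)$\Rightarrow$(iii): matching the chosen reference vertex with the convention $R_e\colon E_{d_1e}\to E_{d_0e}$, and verifying functoriality through the path-independence argument. Everything else is formal.
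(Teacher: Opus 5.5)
Your arguments for (i)$\Leftrightarrow$(ii), (iii)$\Rightarrow$(i) and uniqueness are correct and essentially the paper's. The paper compares an arbitrary extension with its canonical one via $h_\tau=F(t(\tau)\leq\tau)^{-1}$, while you compare two extensions directly through the first vertex. Your hedge ``$R_{[u_0,w]}$ or its inverse'' is harmless there, since the expression is the same for $F$ and $F'$.

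The gap is in (i)$\Rightarrow$(iii). You have the direction of the transports reversed, so the one condition in (iii) that involves the data, $R_e=F(d_0(e)\leq e)^{-1}F(d_1(e)\leq e)$, is never established. Since $d_1(e)$ is the source, $R_{[a,b]}\colon E_a\to E_b$ is \emph{forward} transport. Your own computation in (iii)$\Rightarrow$(i), $R^\sigma_{ij}=F_{v_j\sigma}^{-1}F_{v_i\sigma}\colon E_{v_i}\to E_{v_j}$, says exactly this. This has three consequences:
\begin{itemize}
\item $R^{-1}_{[u_{q-1},u_q]}\cdots R^{-1}_{[u_k,u_{k+1}]}$ is not a map $E_{u_k}\to E_{u_q}$, because its first factor has domain $E_{u_{k+1}}$.
\item The reduced identity $R_{[a,c]}^{-1}=R_{[b,c]}^{-1}R_{[a,b]}^{-1}$ is not even composable.
\item In the second construction, $R_{[u_0,s(\alpha)]}$ maps $E_{u_0}\to E_{s(\alpha)}$, not $E_{s(\alpha)}\to E_{u_0}$.
\end{itemize}
The reorientation does not help even within your own bookkeeping. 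With $F(a\leq e)=\mathrm{id}$, $F(b\leq e)=R_e$ and $d_1(e)=a$, you get $F(b\leq e)^{-1}F(a\leq e)=R_e^{-1}$ again. So the closing promise to ``fix the orientation'' leaves (iii) unproved.

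The repair is to drop the inverses, not to change the reference vertex. Put $F(\tau):=E_{t(\tau)}$ and $F(\alpha\leq\tau):=R_{[t(\alpha),t(\tau)]}$, taking the identity if $t(\alpha)=t(\tau)$, where $[t(\alpha),t(\tau)]$ is the edge-face of $\tau$ joining these two vertices. Functoriality for $\beta\leq\alpha\leq\tau$ is then just (i) on the single $2$-face $[t(\beta),t(\alpha),t(\tau)]$ of $\tau$, so your spine induction is unnecessary. For $e=[a,b]$ one has $F(b\leq e)=\mathrm{id}$ and $F(a\leq e)=R_e$, hence $F(d_0(e)\leq e)^{-1}F(d_1(e)\leq e)=R_e$ exactly. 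This is the paper's $F^{\mathrm{can}}$. Your first-vertex variant also works once typed correctly: $F(\tau):=E_{s(\tau)}$ and $F(\alpha\leq\tau):=R_{[s(\tau),s(\alpha)]}^{-1}$.
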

\begin{proof}
The equivalence between \textup{(i)} and \textup{(ii)} is precisely Proposition \ref{proposition: curvature written out for DG modules for AW DGA in terms of transport operators}.

We now prove that \textup{(i)} implies \textup{(iii)}. For \(\tau = [ w_0, \ldots, w_p ] \in S_p\), define \( F^{\mathrm{can}}( \tau ) := E_{w_p}. \) For \(0\leq a<b\leq p\), let \(e_{ab}^{\tau}\) denote the edge-face
of \(\tau\) with vertices \(w_a,w_b\), and set
\(R_{ab}^{\tau}:=R_{e_{ab}^{\tau}}\). If
\(\sigma\leq\tau\) and \(t(\sigma)=w_a\), define
\[
F^{\mathrm{can}}(\sigma\leq\tau):=
\begin{cases}
R_{ap}^{\tau},&a<p,\\[1mm]
\operatorname{id}_{E_{w_p}},&a=p.
\end{cases}
\]

To verify functoriality, let \(\sigma\leq \eta \leq \tau \), and write
\(t(\sigma)=w_a\) and \(t(\eta)=w_b\). Then \( a \leq b\leq p\). If \(a=b\) or \(b=p\), the required identity is immediate. Otherwise, \([w_a, w_b, w_p] \) is a \(2\)-face of \(\tau\), and condition \textup{(i)}, applied to this \(2\)-face, gives \(R_{ap}^{\tau}=R_{bp}^{\tau}R_{ab}^{\tau}.\) 
Hence, \[F^{\mathrm{can}}(\sigma\leq\tau) = F^{\mathrm{can}}(\eta\leq\tau)
\circ F^{\mathrm{can}}(\sigma\leq\eta).\] So \(F^{\mathrm{can}}: P_S \rightarrow \mathrm{Vect}_\Bbbk\) is a functor and all its restriction maps are isomorphisms. For \(e=[u,v]\in S_1\), then \( F^{\mathrm{can}}(e) = E_v, \ F^{\mathrm{can}}(v\leq e) = \operatorname{id}_{E_v}, \ F^{\mathrm{can}}(u\leq e)=R_e.\) Consequently, the induced edge transport is
\[ F^{\mathrm{can}}(v\leq e)^{-1} \circ F^{\mathrm{can}}(u\leq e)=R_e. \]

Conversely, suppose that \(F\) is an extension as in \textup{(iii)}. Let \( \sigma=[w_0,w_1,w_2] \in S_2\). For \( 0\leq a < b\leq2\), functoriality along
\(w_a\leq e_{ab}^{\sigma}\leq\sigma\) gives \( R_{ab}^{\sigma} = F(w_b\leq\sigma)^{-1} F(w_a\leq\sigma). \)
Hence
\[ R_{12}^{\sigma}R_{01}^{\sigma} = F(w_2\leq\sigma)^{-1}F(w_1\leq\sigma) F(w_1\leq\sigma)^{-1}F(w_0\leq\sigma) = F(w_2\leq\sigma)^{-1}F(w_0\leq\sigma)
=R_{02}^{\sigma}. \]

It remains to prove uniqueness up to cellular gauge. Let $F$ be any connection sheaf extending the given vertex spaces and edge transports, and let $F^{\mathrm{can}}$ be the extension constructed above. For $\tau\in P_S$, set $h_\tau:=F(t(\tau)\leq\tau)^{-1}\colon F(\tau)\to E_{t(\tau)}=F^{\mathrm{can}}(\tau)$, with $h_v=\mathrm{id}_{E_v}$ on vertices. Let \(\sigma\leq\tau\), and put \(u=t(\sigma)\) and \(v=t( \tau)\). If \(u=v\), the same identity is immediate. If \(u \neq v\), let \( e \) be the edge-face of \(\tau\) joining \(u\) to \(v\). By assumption, \( F(v \leq \tau)^{-1} F(u \leq \tau) = R_{e} = F^{\mathrm{can}}(v \leq \tau)^{-1} F^{\mathrm{can}}(u\leq\tau). \)
Hence, \(h_\tau \circ F(u\leq\tau)=F^{\mathrm{can}}(u\leq\tau).\) 
By functoriality, \[ h_\tau\circ F(\sigma \leq\tau)\circ F(u \leq \sigma) = F^{\mathrm{can}}(\sigma \leq \tau) \circ h_\sigma \circ F(u \leq \sigma), \] so \(h\colon F\rightarrow F^{\mathrm{can}}\) is a cellular gauge. \end{proof}

\begin{remark} Proposition \ref{proposition: edge transport extension criterion} has a natural interpretation as a gluing statement on the Alexandrov space \(X_{P_S}\). Let \( \underline{E_v} \) denote the constant sheaf on \( U_v \) with value \( E_v \). For every \(v \in S_0\), let \(U_v = \{ \tau \in P_S \mid v \leq \tau \}.\) For \( v \neq w\), the intersection decomposes as \[ U_v \cap U_w = \bigsqcup_{ \substack{ e \in S_1 \\
\{d_1(e), d_0(e)\} = \{ v, w\}}} U_e. \] On each component \( U_e\), define the transition isomorphism \( \varphi_{vw}^e \colon \left. \underline{E_v} \right|_{ U_e} \rightarrow \left. \underline{ E_w } \right|_{ U_e } \) by \[ \varphi^e_{vw} = \begin{cases} \underline{R_e}, & (d_1(e), d_0(e))=(v,w), \\
\underline{R_e^{-1}}, & (d_1 (e), d_0(e))=(w,v). \end{cases}\] 
This determines \( \varphi_{vw} \) on \( U_v \cap U_w \) such that \( \varphi_{wv} = \varphi_{vw}^{-1}\). If both \(e = [v,w]\) and \( e' = [w,v] \) occur, then \( \varphi^e_{vw} = \underline{R_e}, \ \varphi^{e'}_{vw} = \underline{R_{e'}^{-1}}. \) No relation between \( R_e \) and \( R_{e' } \) is imposed, since they belong to different components of the overlap.

Similarly, the opens \(U_\sigma\), with \(\sigma\) ranging over the
\(2\)-simplices having vertex set \(\{v_0,v_1,v_2\}\), are precisely
the components of \(U_{v_0}\cap U_{v_1}\cap U_{v_2}\). For every such \(2\)-simplex \(\sigma=[v_0,v_1,v_2]\) and \(0 \leq i < j \leq2\), let
\( e_{ij}^{\sigma} = [v_i,v_j ]\) and set
\( R_{ ij }^{ \sigma} := R_{e_{ij}^{ \sigma}}\). Since \( U_\sigma \subseteq U_{e_{ij}^{\sigma}},\) the restriction of \(\varphi_{v_iv_j}\) to \(U_\sigma\) is the morphism between constant sheaves induced by \( R_{ij}^{\sigma} \colon E_{v_i} \rightarrow E_{v_j}. \)
Equivalently, for every \(\tau\in U_\sigma\), \( (\varphi_{v_iv_j})_\tau = R_{ij}^{\sigma}. \) Hence, the Čech cocycle condition on \(U_\sigma\), \(\varphi_{v_0v_2} = \varphi_{v_1v_2}\circ\varphi_{v_0v_1},\)
is equivalent to \(R_{02}^{\sigma} = R_{12}^{\sigma}R_{01}^{\sigma}.\) In other words, face-flatness is exactly the descent condition for the constant sheaves \(\underline{E_v}\). Standard gluing of sheaves (see e.g. \cite[\href{https://stacks.math.columbia.edu/tag/00AK}{Tag 00AK}]{sp}) therefore gives a sheaf \(\widetilde F\) on \(X_{P_S}\), whose associated cellular sheaf is an extension of the prescribed data and hence is cellularly gauge-equivalent to \(F^{\mathrm{can}}\).\end{remark}

The description obtained in this way can be seen as the finite-space analogue of the correspondence between local systems and flat vector bundles on smooth manifolds. Since both sheaves and vector bundles are assembled from local data by descent, this analogy suggests that the discrete theory developed above can also provide finite models for differential-geometric objects. To formulate this more precisely, we recall the language of finite ringed spaces.

A \emph{finite ringed space} is a pair \((X,\mathcal O)\), where \(X\) is a finite topological space and \(\mathcal O\) is a sheaf of commutative unital rings. For \( X = X_{P_S} \) with \( S\in\ssreg_{\le n},\) such a sheaf is equivalently a functor \( \mathcal O\colon P_S \rightarrow\mathsf{CRing}\) (see \cite{fioresi2026gluing} or \cite{vakil}). An \(\mathcal O\)-module \(\mathcal M\) is quasi-coherent if and only if the map \[ \mathcal M_\sigma \otimes_{\mathcal O_\sigma} \mathcal O_\tau  \rightarrow \mathcal M_\tau \] is an isomorphism for every \(\sigma\leq\tau\). A proof of this statement can be found in \cite[\S 3]{Sancho2018HomotopyFiniteRingedSpaces}. We denote by \(\underline{\Bbbk}_{X_{P_S}}\) the constant structure sheaf on \(X_{P_S}\), whose stalk at every \( \sigma\in P_S \) is \(\Bbbk\) and whose structure maps are the identity.

\begin{proposition}\label{proposition: connection sheaf iff local system iff quasi-coherent} Let \(S \in \ssreg_{\leq n}\), \(F \colon P_S\to \mathrm{Vect}_\Bbbk\) a cellular sheaf and \(\widetilde F\) its induced sheaf on \(X_{P_S}\). The following are equivalent:
\begin{enumerate}[label=\textup{(\roman*)},nosep]
\item \(F\) is a connection sheaf.
\item \(\widetilde F\) is locally constant on \(X_{P_S}\).
\item \(\widetilde F\) is quasi-coherent as a \( \underline{\Bbbk}_{X_ {P_S}}\)-module.
\end{enumerate}
\end{proposition}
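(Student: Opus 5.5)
I would prove the equivalences by working stalkwise on the Alexandrov space, using two facts. First, the stalk of \(\widetilde F\) at \(\sigma\) is \(F(\sigma)\), since \(U_\sigma\) is the smallest open containing \(\sigma\). Second, for \(\tau\in U_\sigma\), the restriction of sections \(\widetilde F(U_\sigma)\to\widetilde F(U_\tau)\) is \(F(\sigma\leq\tau)\). This comes from the equivalence of Proposition \ref{proposition: cellular sheaves Alexandrov equivalence}: sections over \(U_\sigma\) are compatible families on \(U_\sigma\), and these are determined by their value at the minimum \(\sigma\). I will prove (i)\(\Leftrightarrow\)(ii) and (i)\(\Leftrightarrow\)(iii) separately.

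For (i)\(\Rightarrow\)(ii), fix \(\sigma\) and compare \(\widetilde F|_{U_\sigma}\) with the constant sheaf \(\underline{F(\sigma)}\) on \(U_\sigma\). The open \(U_\sigma\) is itself an Alexandrov space with minimum \(\sigma\), so every open \(U_\tau\subseteq U_\sigma\) is connected. Hence \(\underline{F(\sigma)}\) corresponds to the functor \(U_\sigma\to\mathrm{Vect}_\Bbbk\) with constant value \(F(\sigma)\) and identity maps. The maps \(F(\sigma\leq\tau)\colon F(\sigma)\to F(\tau)\) are natural in \(\tau\) by functoriality, and they are isomorphisms by (i). So they define a natural isomorphism of functors on \(U_\sigma\), and hence an isomorphism of sheaves \(\underline{F(\sigma)}\cong\widetilde F|_{U_\sigma}\). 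Since the \(U_\sigma\) cover \(X_{P_S}\), \(\widetilde F\) is locally constant.

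For (ii)\(\Rightarrow\)(i), let \(\sigma\leq\tau\). Choose an open \(W\ni\sigma\) on which \(\widetilde F\) is isomorphic to a constant sheaf \(\underline V\). Every open containing \(\sigma\) contains \(U_\sigma\), so we may shrink to \(U_\sigma\). Restricting, we get an isomorphism \(\widetilde F|_{U_\sigma}\cong\underline V|_{U_\sigma}\). On \(U_\sigma\) the constant sheaf has identity restriction maps between the connected basic opens \(U_\sigma\supseteq U_\tau\). Transporting along the isomorphism, \(F(\sigma\leq\tau)\) is conjugate to \(\mathrm{id}_V\), and is therefore an isomorphism. The only care needed is the argument that \(\underline V\) has identity restrictions on basic opens, which uses connectedness of each \(U_\tau\).

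For (i)\(\Leftrightarrow\)(iii), I use the criterion recalled from \cite[\S 3]{Sancho2018HomotopyFiniteRingedSpaces}. With \(\mathcal O=\underline{\Bbbk}_{X_{P_S}}\), all \(\mathcal O_\sigma=\Bbbk\) and all structure maps are identities. First I check that a \(\underline{\Bbbk}\)-module is exactly a sheaf of \(\Bbbk\)-vector spaces, so every cellular sheaf is such a module. Then the comparison map \(F(\sigma)\otimes_\Bbbk\Bbbk\to F(\tau)\) is identified with \(F(\sigma\leq\tau)\). Quasi-coherence therefore holds if and only if all these maps are isomorphisms, which is (i). I expect the main obstacle, though a mild one, to be making the identifications of stalks and restriction maps precise, and justifying that the constant sheaf restricted to \(U_\sigma\) corresponds to the constant functor. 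The rest is formal.
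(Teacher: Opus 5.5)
Your proof is correct and follows the same route as the paper. You get (i)\(\Leftrightarrow\)(ii) by restricting to the minimal open neighbourhood \(U_\sigma\), where the least element, equivalently the connectedness of each \(U_\tau\), forces a constant sheaf to have identity restrictions. You get (i)\(\Leftrightarrow\)(iii) by applying the quasi-coherence criterion of \cite{Sancho2018HomotopyFiniteRingedSpaces} to \(\underline{\Bbbk}_{X_{P_S}}\), where the comparison map reduces to \(F(\sigma\leq\tau)\); beyond this you only spell out identifications the paper leaves implicit, namely stalks, sections over \(U_\sigma\), and the constant sheaf as the constant functor.
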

\begin{proof}
To see the equivalence between \textup{(i)} and \textup{(ii)}, fix \(\sigma\in P_S\). If all structure maps are isomorphisms, then for every \(\tau\in U_\sigma\), 
\(F(\sigma)\to F(\tau) \) is an isomorphism, so  \(\widetilde F|_{U_\sigma}\) is constant. Since the \(U_\sigma\)'s form a basis, \(\widetilde F\) is locally constant.

Conversely, suppose \(\widetilde F\) is locally constant. Since \(U_\sigma\) is the minimal open neighbourhood of \(\sigma\), after shrinking a trivializing neighbourhood of \( \sigma\) we may assume \(\widetilde F|_{U_\sigma}\) is constant. If \(\sigma\leq\tau\), then \(\tau\in U_\sigma\), and \(F(\sigma\leq\tau)\) is a restriction map of a constant
sheaf on \(U_\sigma\). Since \(U_\sigma\) has a least element (alternatively notice that \(U_\sigma\) is an irreducible open set for all $\sigma$), this map is an
isomorphism. Hence \(F\) is a connection sheaf.

The equivalence between \textup{(i)} and \textup{(iii)} follows by a direct application of the quasi-coherence criterion of \cite[Theorem 3.6]{Sancho2018HomotopyFiniteRingedSpaces} to the constant structure sheaf.
\end{proof}

The preceding interpretation is in terms of the finite Alexandrov space \(X_{P_S}\). Alternatively, one can think of \(S\) as obtained from a cover of an ambient space.

Let \(\mathcal U=\{U_i\}_{i\in I}\) be a finite open cover of a space \(M\), with \(I\) totally ordered. Its \emph{ordered nondegenerate Čech nerve} is the semisimplicial set \[ N^{\mathrm{ord}}(\mathcal U)_p = \bigl\{ [i_0,\ldots,i_p] \bigm| i_0<\ldots<i_p,\  U_{i_0}\cap\ldots\cap U_{i_p}\neq\varnothing \bigr\}, \] with face maps given by deleting indices. The cover is called \emph{good} if every nonempty finite intersection is contractible. Because only increasing index sequences occur, this semisimplicial set contains at most one simplex with any prescribed set of vertices. A rank-\(r\) \(\Bbbk\)-\emph{local system} on \(M\) is a locally constant sheaf of \(\Bbbk\)-vector spaces locally isomorphic to \(\underline{\Bbbk}_M^r\).  We have the following proposition.

\begin{proposition}\label{theorem: final correspondence}
Let \(M\) be a locally path-connected space and let \( \mathcal{U} = \{U_i\}_{i\in I}\) be a finite ordered good cover of \(M\). Set \(S:=N^{\mathrm{ord}}(\mathcal U).\) Then the category of rank-\(r\) connection sheaves on \(S\) is equivalent to the category of rank-\(r\)
\(\Bbbk\)-local systems on \(M\).

If moreover \(M\) is a smooth manifold and
\(\Bbbk=\mathbb R\) or \(\mathbb C\), this gives an equivalence with
the category of rank-\(r\) smooth \(\Bbbk\)-vector bundles equipped
with flat connections.
\end{proposition}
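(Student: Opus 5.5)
The equivalence will be obtained as a composite of three equivalences. First, identify rank-\(r\) connection sheaves on \(S\) with rank-\(r\) locally constant sheaves on the Alexandrov space \(X_{P_S}\), using Proposition \ref{proposition: cellular sheaves Alexandrov equivalence} together with Proposition \ref{proposition: connection sheaf iff local system iff quasi-coherent}. Second, identify both with representations of a fundamental groupoid. By the parallel transport construction, a connection sheaf is the same as a functor \(F\colon P_S\to\mathrm{Vect}_\Bbbk\) inverting all arrows. Such functors are the same as functors out of the groupoid localization \(P_S[P_S^{-1}]\), which is the fundamental groupoid \(\Pi_1|N(P_S)|\). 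Since \(|N(P_S)|\) is the barycentric subdivision of \(|S|\), this is equivalent to \(\Pi_1|S|\). Morphisms on both sides are natural transformations, so this is an equivalence of categories, not only a bijection on iso-classes. Third, pass from \(|S|\) to \(M\) via the nerve theorem, which is where goodness is used.

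To make the last step rigorous while staying close to the sheaf language, I will argue by descent. Given a local system \(\mathcal L\) on \(M\), each \(U_\sigma:=U_{i_0}\cap\ldots\cap U_{i_p}\) is contractible and locally path-connected, so \(\mathcal L|_{U_\sigma}\) is constant. Set \(F(\sigma):=\Gamma(U_\sigma,\mathcal L)\); this is \(r\)-dimensional. For \(\sigma\le\tau\), take \(F(\sigma\le\tau)\) to be the restriction \(\Gamma(U_\sigma,\mathcal L)\to\Gamma(U_\tau,\mathcal L)\). Note the direction: \(\sigma\le\tau\) means \(U_\tau\subseteq U_\sigma\), so this matches covariance on \(P_S\). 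These restrictions are isomorphisms because both opens are connected and the sheaf is constant on the larger one. This gives a functor \(\Phi\) from local systems to \(\mathsf{CSh}_r(S)\).

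In the other direction, given a connection sheaf \(F\), glue the constant sheaves \(\underline{F(i)}\) on the \(U_i\). The transition maps on \(U_i\cap U_j\) (for \(i<j\), with \(U_i\cap U_j\) connected) are \(F(j\le[i,j])^{-1}F(i\le[i,j])\). The cocycle condition on triple overlaps holds by functoriality, exactly as in the proof of Proposition \ref{proposition: edge transport extension criterion}. Standard gluing \cite{sp} produces a local system \(\Psi(F)\).

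Next I verify \(\Psi\Phi\cong\mathrm{id}\) and \(\Phi\Psi\cong\mathrm{id}\) naturally, using sections over connected opens and the sheaf property. The point is that on a connected \(U_\sigma\), a constant sheaf has global sections equal to its stalk value, and compatibility reduces to the Čech description. Full faithfulness then follows because a morphism of local systems is determined by its restrictions to the \(U_i\), compatible on overlaps. This is precisely a natural transformation of the associated functors: naturality on edges suffices, by the argument in Lemma \ref{lemma: sheaf associated to a cellular differential graded module is functorial and well-defined}.

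The step I expect to be the main obstacle is essential surjectivity together with the correct handling of \emph{ordered} nerves. One must check that the Čech nerve records all overlaps: the unordered pair \(\{i,j\}\) corresponds to a single edge, with the orientation fixed by \(i<j\). One must also check that connectedness of the intersections, which follows from goodness, makes the restriction maps isomorphisms. If the direct descent argument becomes messy, the fallback is the groupoid route described first: \(\Pi_1(M)\simeq\Pi_1|S|\) by the nerve theorem for good covers, and local systems on a locally path-connected space that admits a good cover are equivalent to representations of \(\Pi_1(M)\).

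Finally, the smooth statement follows by composing with the Riemann–Hilbert correspondence \cite[\S9.2]{Voisin_I}, which identifies \(\Bbbk\)-local systems of rank \(r\) with rank-\(r\) flat vector bundles via \(\mathcal L\mapsto(\mathcal L\otimes_{\underline\Bbbk}C^\infty_M,\ \mathrm{id}\otimes d)\).
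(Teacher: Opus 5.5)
Your descent argument is correct and is essentially the paper's proof. Like the paper, you use goodness and local path-connectedness to make each local system constant on the intersections $U_\sigma$, reduce to Čech descent data $(E_i,R_{ij})$ with the cocycle condition $R_{ik}=R_{jk}R_{ij}$ on nonempty triple overlaps, match objects and morphisms with rank-$r$ connection sheaves (the paper does this by citing Proposition~\ref{proposition: edge transport extension criterion}), and handle the smooth case via \cite[\S9.2]{Voisin_I}; the minor differences are that your $\Phi(\mathcal L)(\sigma)=\Gamma(U_\sigma,\mathcal L)$ builds the sheaf on all of $P_S$ directly instead of through the canonical extension, and your fundamental-groupoid/nerve-theorem route is not used in the paper.
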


\begin{proof}
Since \(M\) is locally path-connected and \(\mathcal U\) is a good cover, every rank-\(r\) local system restricts to a constant sheaf on each nonempty finite intersection of members of \(\mathcal U\). Hence, descent data for rank-\(r\) local systems relative to \(\mathcal U\) consist of rank-\( r \) vector spaces \( E_i\), together with isomorphisms \( R_{ij} \colon E_i \rightarrow E_j \)
for \( i < j \) whenever \( U_i \cap U_j \neq \varnothing\). The transition in the reverse direction is \( R_{ij}^{-1} \). On every nonempty triple intersection \(U_i \cap U_j \cap U_k\), with \(i < j < k\), the Čech cocycle condition is \[ R_{ik} = R_{jk} R_{ij}. \]
Morphisms between two such descent data are families of linear maps \( f_i \colon E_i \to E_i'\) satisfying \( f_j R_{ij}=R_{ij}'f_i.\) By Proposition \ref{proposition: edge transport extension criterion}, these are
precisely the objects and morphisms defining rank-\(r\) connection sheaves on \(S=N^{\mathrm{ord}}(\mathcal U)\). Sheaf descent on \(\mathcal U\) therefore gives the claimed equivalence with
rank-\(r\) \(\Bbbk\)-local systems on \(M\).

For the smooth case, the standard local-system/flat-bundle correspondence \cite[\S 9.2]{Voisin_I} identifies a local system $\cL$ with \(\mathcal L\otimes_{\underline{\Bbbk}_M} C^\infty_{M,\Bbbk}\) endowed with its canonical flat connection, and conversely a flat vector bundle with its sheaf of flat sections.
\end{proof}

\begin{remark}
The preceding correspondence also tells us that the local systems involved only depend on \(2\)-truncation \(N^{\mathrm{ord}}(\mathcal U)_{\leq2}\): this is not surprising considering the cocycle conditions underlying gluing of sheaves. Under the equivalence, gauges of connection sheaves correspond to isomorphisms of local systems and, in the smooth case, to bundle isomorphisms preserving the flat connection. This provides us with a simplified combinatorial setting to study these important objects.
\end{remark}

The correspondence of Proposition \ref{theorem: final correspondence} is formulated relative to a chosen finite ordered good cover. A given local system may, however, be trivialized by many such covers. It is therefore natural to organize these constructions in a cover-independent manner. This should lead to a more intrinsic comparison between connection sheaves, local systems, and flat vector bundles.

Moreover it is known that a differentiable manifold, or a scheme in the algebraic geometry setting, can be recovered from the datum of a set of differentiable algebras satisfying suitable cocycle conditions; see \cite{fioresi2026gluing, salasds, Sancho2017FiniteSpacesSchemes} and the references therein. In \cite{Sancho2017FiniteSpacesSchemes}, an equivalence is established between certain finite ringed spaces and schemes and general properties of finite ringed spaces arising from other objects like manifolds. While in these works the finite ringed spaces considered might not be directly associated with face posets of semisimplicial sets, in \cite{fioresi2026gluing} the authors begin to develop a similar theory for finite ringed spaces arising explicitly from two-dimensional semisimplicial sets. In that setting, the scheme theoretic case is developed in detail, while possible extensions to manifolds are also suggested. It would therefore be desirable to establish a more precise correspondence between these discrete models and manifolds, together with vector bundles and connections on them, thereby extending both the treatment of \emph{op. cit.} and the one developed in this paper. We plan to pursue this direction in future work, including the problem of extending the correspondence between local systems and connection sheaves.
 
On the other hand, arbitrary families of linear edge maps determine left connections by Proposition \ref{prop:edgewise-connections} and open the door to the study of more general geometries described by quiver representations. To extend to a connection sheaf, the edge maps must first be isomorphisms; among families of edge isomorphisms, Proposition \ref{proposition: edge transport extension criterion} identifies curvature on the \(2\)-simplices as precisely the remaining obstruction. We plan to investigate this direction as well, in a way compatible with the finite-ringed-space approach.

\bibliographystyle{dgdict}
\bibliography{bibtex}

\end{document}